\definecolor{Red}{cmyk}{0,1,1,0}
\definecolor{verde}{cmyk}{1,0,1,0}
\definecolor{azul}{cmyk}{1,1,0,0}
\newcommand{\horrule}[1]{\rule{\linewidth}{#1}}
\newtheorem{theorem}{Theorem}
\newtheorem{corollary}[theorem]{Corollary}
\newtheorem{definition}[theorem]{Definition}
\newtheorem{lemma}[theorem]{Lemma}
\newtheorem{proposition}[theorem]{Proposition}
\newtheorem{remark}[theorem]{Remark}
\begin{document}
\begin{center}
\vspace*{0.5 cm}
%
%
{\large\bf
Phase Transitions in One-dimensional
Translation Invariant Systems:
a Ruelle Operator Approach.
}
%
%
%
%
\vspace*{0.5cm}\\
    Leandro Cioletti$^{\dagger}$ and
    Artur O. Lopes$^{\ddagger}$
\\
 \vskip 3mm
$\dagger$
{\footnotesize Universidade de Brasilia, 70910-900 Brasilia-DF, Brazil}
\\
$\ddagger$
{\footnotesize UFRGS,  91.500 Porto Alegre-RS, Brazil}
\end{center}
\vskip 5mm
%
%
%
%

\begin{abstract}

We consider a family of potentials $f$, derived from the Hofbauer
potentials, on the symbolic space
$\Omega=\{0,1\}^\mathbb{N}$  and the shift mapping $\sigma$ acting on it.
A Ruelle operator framework
is employed to show there is a phase transition when the temperature
varies in the following senses: the pressure is not analytic, there
are multiple eigenprobabilities for the dual of the Ruelle operator,
the DLR-Gibbs measure is not unique and finally
the Thermodynamic Limit is not unique.
Additionally, we explicitly calculate the critical points for these
phase transitions. Some examples which are not of
Hofbauer type are also considered.
The non-uniqueness of the Thermodynamic
Limit is proved by considering
a version of a Renewal Equation. We also show that
the correlations
decay polynomially and compute the decay ratio.
\end{abstract}
\vskip 1mm
{\footnotesize
MSC: 82B20, 82B41, 82B26, 37D35, 37A35, 37A50, 37A60.
\newline
Keywords: DLR-measures, Gibbs Measures, Thermodynamic Limits, Ruelle Operator,
Dual measures, Phase Transition.
}

\setcounter{tocdepth}{1}
\noindent\horrule{1.0pt}
\tableofcontents
\vspace*{0.3cm}
\noindent\horrule{1.0pt}
%
%

\section{Introduction}\label{sec1}

In this work we are interested in the study of the phase
transition phenomenon, as temperature varies,
for some systems defined on the one-dimensional lattice
$\mathbb{N}$.
Our study focus mainly on the models which are generalizations of the
so called Hofbauer models on $\Omega=\{0,1\}^{\mathbb{N}}$
(see \cite{Hof} and \cite{Wal}).
The Hofbauer models, besides being interesting mathematical objects,
are also related to the
B. Fedelhorf and M. Fisher models (see \cite{FF} and \cite{F})
which appear in the Physics literature (see also \cite{Wang1} and \cite{Wang2}).
They are also studied in
\cite{L1}, \cite{FL}, \cite{IT} and \cite{L0}, and are
 associated to a family of continuous potentials.
They are also singled out for being fixed points of a certain renormalization
mapping (see \cite{BLL}, \cite{BL} and \cite{BL1}).

The analysis of
Phase Transitions in Thermodynamic Formalism is a problem which can be
understood in several different settings (see \cite{L2} \cite{Sar1}
\cite{Sar2} \cite{Sar3} \cite{L8} \cite{L7} \cite{Jap} \cite{CR}).
Loosely speaking, in most cases a phase transition is defined
by an abrupt qualitative change of the model
in terms of some of its parameters (in general, the inverse of the
 temperature) in a neighborhood of
a special value, called critical point or phase transition point.

We will present here several results on phase transitions for the one
dimensional lattice $\mathbb{N}$. Concepts and results which are common in
Statistical Mechanics are explored here in the setting of Thermodynamic
Formalism.
We would like to mention the very interesting work \cite{Sarig1}
where this unifying point of view is explored and nice results are carefully
presented.
We refer the reader to the references \cite{PP}, \cite{Bow} and \cite{Walk} for basic results in
Thermodynamic Formalism and Ergodic Theory and \cite{Rue}, \cite{Kell}, \cite{BC}, \cite{EFS}, \cite{Bov}, \cite{Ellis} and
\cite{Geo}, \cite{Isra} for basic results in Statistical Mechanics.
In the companion paper \cite{CL} the authors presented in detail several
concepts and results which are used here.

To give precise statements of the main results of this paper
we introduce some notations and definitions.
We consider the Bernoulli space or sometimes
called symbolic space $\Omega=\{0,1\}^\mathbb{N}$ and the full
left shift $\sigma:\Omega\to\Omega$ acting on this symbolic space.

 The main focus in the paper are potentials defined
on the set  $\Omega=\{0,1\}^\mathbb{N}$  instead of
the usual space $\Omega=\{0,1\}^\mathbb{Z}$. This is the setting where one can apply the Ruelle operator formalism (see \cite{Rue}). The final result
applies to $\Omega=\{0,1\}^\mathbb{Z}$  as we will describe in full detail in the last section. It is just a technical issue of using coboundary functions to go from one setting to the other.

A potential is a continuous function $f: \Omega \to \mathbb{R}$ which
describes the interaction of spins in the lattice $\mathbb{N}$ .
We denote by $\mathcal{M}_{1}(\sigma)$ the set of
invariant probabilities measures (over the Borel sigma algebra of $\Omega$) under
$\sigma$.

\begin{definition}[Pressure]
For a continuous potential $f:\Omega \to \mathbb{R}$ the Pressure
of $f$ is given by
$$
	P(f)
	=
	\sup_{\mu\in \mathcal{M}_{1}(\sigma)}
		\left\{
			h(\mu) + \int_{\Omega} f\, d\mu
		\right\},
$$ where $h(\mu)$ denotes the
Shannon-Kolmogorov entropy of $\mu$ (see \cite{Walk} for definition).
\end{definition}

\begin{definition}[Equilibrium State]
A probability measure $\mu\in \mathcal{M}_1(\sigma)$
is called an equilibrium state for $f$ if
$$
	h(\mu) + \int_{\Omega} f\, d\mu=P(f).
$$
\end{definition}
\noindent{\bf Remark.}
If $f$ is continuous there always exists at least one equilibrium state (see \cite{WAL7}).
For any potential in the class W$(X,T)$  or Bow$(X,T)$ the equilibrium state is
unique (see \cite{Wal0}). When $f$ is assumed to be only a continuous
function there are examples where
we have more than one equilibrium state for $f$.
The existence of more than one equilibrium state is a possible meaning for phase transition.
An example is given in \cite{Hof} and
here we present some other examples.
As we mention before there are several definitions of phase transition.
They are not necessarily equivalents. On this paper
we investigate five types of phase transition (listed below) and exhibit a
continuous potential $f$ where these different notions coincides on
a single critical value $\beta_c>0$.

\begin{enumerate}\label{definicoes-phase-transition}
\item The function $p(\beta)=P(\beta\, f)$ is not analytic at $\beta=\beta_c$

\item There are more than one equilibrium state, that is, at least two
probability measures maximizing $h(\mu) + \beta_c \int_{\Omega} f\, d \mu$.

\item The dual of Ruelle operator (to be defined later) has more than one
eigenprobability for $\beta_c f$

\item There exist more than one DLR (to be defined later) probabilities measures for the
potential $\beta_c f$.

\item There is more than one Thermodynamic Limit (to be defined later).
\end{enumerate}

We refer the reader to \cite{CL} where the equivalences among
the definitions $3,4$ and $5$ are proved
for Bowen potentials. As far as we know there is no general
theory on Phase Transitions and examples
are handled in a case by case basis.

Decay of correlation of exponential
type (for a large class of observable functions)
occurs for the equilibrium probability of a
H\"older potential. By the other hand, in some cases
where there is  phase transition (not H\"older), for
the equilibrium  probability (at the transition temperature)
one gets polynomial decay of correlation. We show in
Section \ref{decay} that this is indeed the
case for the example we analyze in the paper.
In the proof we use Renewal Theory.

There are interesting questions related to what happens
near this critical point and regarding the properties of the model at it.
For example, the problem of maximizing probabilities
and selection or non-selection at
zero temperature in distinct models were
analyzed in \cite{BLL1}, \cite{Cha},
\cite{van}, \cite{Lep3}, \cite{GT} \cite{BCLMS} and \cite{LMMS}.
In some cases, there is more than one selected ground state.
For the potentials considered here
(the Double Hofbauer potentials) we analyze questions about
selection or non-selection at a positive critical temperature
for both symmetric and asymmetric cases. The term
``phase transition'' shall hereby be solely employed when
the critical temperature is strictly positive.
\\
\\

Regarding to the first notion of phase transition
given a continuous potential $f$ a natural question is
whether the Pressure $P(\beta\, f)$ is differentiable or even analytic
as a function of $\beta$, the inverse temperature.
In such generality this question is very hard but if
$f$ is a H\"older potential then
the mapping $\beta\mapsto P(\beta f)$
is real analytic function for any $\beta>0$
(see \cite{PP} and \cite{Bow}).

In the sequel we introduce the basic concepts we used in the
phase transition notions listed above.

\bigskip

 \begin{definition} Given a continuous function $f: \Omega \to \mathbb{R}$,
 consider the Ruelle operator (or transfer)
 $\mathcal{L}_f:C(\Omega)\to C(\Omega)$
 (for the potential $f$)
 defined in such way that for any continuous
 function $\psi:\Omega\to\mathbb{R}$ we have $\mathcal{L}_{f} (\psi)=\varphi$, where
$$
	\varphi(x)=\mathcal{L}_{f} (\psi)(x)
	=
	\sum_{ y\in\Omega;\, \sigma(y)=x} e^{ f(y)} \, \psi(y).
$$
\end{definition}

\noindent{\bf Remark.}
When $f$ is a H\"older function then $\mathcal{L}_{f}$
sends the space of H\"older functions to itself.

This operator (which is also called transfer operator) is
a very helpful tool in the analysis of equilibrium states
in Thermodynamic Formalism. One important issue
is the existence or not of an strictly positive continuous
eigenfunction for such operator.

\begin{definition}
The dual operator $\mathcal{L}_{f}^*$ acts on the space
of probability measures. It sends a probability measure $\mu$
to a probability measure $\mathcal{L}_{f}^*(\mu)=\nu$
defined in the following way:
the probability measure $\nu$ is unique
probability measure satisfying
$$
	\int_{\Omega} \, \psi d\, \mathcal{L}_{f}^* (\mu)
	=
	\int_{\Omega} \psi\, d\nu
	=
	\int_{\Omega} \mathcal{L}_{f}(\psi) d \mu
$$
for any continuous function $\psi$.
\end{definition}

\begin{definition}[Gibbs Measures]\label{ft-gibbs-measures-lam}
Let $f:\Omega\to\mathbb{R}$ be a continuous function.
We call a probability measure $\nu$ a Gibbs probability for $f$
if there exists a positive
$\lambda>0$ such that $\mathcal{L}_f^*(\nu) =\lambda\, \nu$.
We denote the set of such probabilities by $\mathcal{G}^*(f)$.
\end{definition}

In general, for any continuous function $f$ the set
$\mathcal{G}^*(f) \ne \emptyset$.
We should remark that a probability measures on $\mathcal{G}^*(f)$
is not necessarily translation invariant, even if $f$ is Lipchitz or H\"older.

For a H\"older potential $f$ there exist a value $\lambda>0$ (the spectral radius) which is a common
eigenvalue for both Ruelle operator and its dual (see \cite{PP}).
The eigenprobability $\nu$ associated to the maximal $\lambda$ is
unique.
This probability $\nu$ (which is unique) is a Gibbs state according
to the above definition.

A natural question is: for a Holder continuous potential $f$ is it possible to have different eigenprobabilities $\nu_1,\nu_2\in \mathcal{G}^*(f)$ for $\mathcal{L}_f^*$, associated to different eigenvalues $\lambda_1$ and $\lambda_2$? This is not possible due to properties of the involution kernel (see \cite{LMMS}). These two eigenprobabilities would determine via the involution kernel two positive eigenfunctions for another Ruelle operator $\mathcal{L}_{f^*}$, with the eigenvalues  $\lambda_1$ and $\lambda_2$ (see \cite{GLP}), where $f^*$ is the dual potential for $f$. This is not possible (see for instance \cite{PP} or Proposition 1 in \cite{LMMS}). Anyway, eigendistributions for $\mathcal{L}_f^*$  may exist (see \cite{GLP}).

This eigenvalue $\lambda$ is the spectral radius of the operator
$\mathcal{L}_f$.
If $\mathcal{L}_f(\varphi)= \lambda \varphi$
and $\mathcal{L}_f^*(\nu)= \lambda \nu$, then up to
normalization (to get a probability measure) the
probability measure $\mu=\varphi\, \nu$ is the
equilibrium state for $f$.

Because of this last mentioned fact
the Gibbs probability $\nu$ helps
to identify the equilibrium probability $\mu$.
If $\varphi$ is constant equal to $1$ then the
Gibbs probability measure
is the equilibrium probability.

These are non trivial results
and the detailed proofs can be found in \cite{PP}.
Several properties of the equilibrium state $\mu$ (mixing, exponential decay of
correlations, central limit theorem, etc...) are obtained
using this formalism in the H\"older
case (see \cite{PP}).
The bottom line is:
several nice properties of the Gibbs state
follow from the use of  Ruelle operator properties.
All these are extended to the equilibrium state
via this formalism.

Under the square summability of the variation
for normalized potentials there exist just
one eigenprobability (see \cite{JO} and \cite{JOP})
and therefore there is no
phase transition in this case in this sense.

When there exists a positive continuous eigenfunction
for the Ruelle operator (of a continuous potential $f$)
it is unique (the proof in \cite{PP} works for a continuous potential $f$). We remark that
for a general continuous potential may not exist a
positive continuous eigenfunction.
We present some examples here. On the other hand,
eigenprobabilities always exists for a
continuous potential since the space $\Omega$ is compact (see Theorem \ref{w2}).
If for a continuous potential $f$ there exists an eigenfunction $\varphi$ and
an eigenprobability $\nu$, then $\mu=\varphi\, \nu$
defines an equilibrium state for $f$ (see Section 2 in \cite{PP}).

For a H\"older potential $f$ we do not have phase transintion in any above
described notions. One heuristic reason is in the H\"older case
the influence over the state in fixed site $n$ in the lattice,
by any other site decays exponentially fast with respect to the distance
between them.

\medskip

We will consider in this paper some continuous potentials
defined by P. Walters in \cite{Wal} on the Bernoulli space
$\{0,1\}^\mathbb{N}$. The values of a potential $f$
in this class are defined just by
the first strings of zeroes and ones.
To be more precise consider four sequence of real numbers
$a_n,b_n,c_n,d_n$ and constants $a,b,c,d$.
The potential $f$ in this class satisfies
\begin{align*}
f(0^n1z)= a_n,\quad
f(0^{\infty})=a,\quad
f(10^n1z)=d_n,\quad
f(10^{\infty})=d,\quad
\\
f(01^n0z)=b_n,\quad
f(01^{\infty})=b,\quad
f(1^n0z)=c_n,\quad
f(1^{\infty})=c,\quad
\end{align*}
and is assumed that $a_n \to a$, $b_n \to b$, $c_n \to c$ and $d_n \to d$.

The existence of positive {\bf continuous} eigenfunction
for $f$ is guaranteed (see Theorem 3.1 in \cite{Wal}) as long as
the following inequality is satisfied
\begin{equation}
\label{ee1}
	1<
	\frac{1}{e^{2\max(a,c)}}
	\left[
		e^{d_1}+\sum_{j=1}^{\infty} e^{d_{j+1}}\frac{e^{a_2+\ldots+a_{j+1}}}{e^{j\max(a,c)}}
	\right]
	\left[
		e^{b_1}+\sum_{j=1}^{\infty} e^{b_{j+1}}\frac{e^{c_2+\ldots+c_{j+1}}}{e^{j\max(a,c)}}
	\right].
\end{equation}
In this case (see Theorem 3.5 in \cite{Wal}) the eigenvalue $\lambda$ for the Ruelle operator satisfies:
\begin{equation}
\label{ee2}
	1=
	\frac{1}{\lambda^2}
	\left[
		e^{d_1}+\sum_{j=1}^{\infty} e^{d_{j+1}}\frac{e^{a_2+\ldots+a_{j+1}}}{\lambda^j }
	\right]
	\left[
		e^{b_1}+\sum_{j=1}^{\infty} e^{b_{j+1}}\frac{e^{c_2+\ldots+c_{j+1}}}{\lambda^j}
	\right].
\end{equation}
There are continuous potentials $f$
of the above form such that the condition (\ref{ee1}) is not satified.
Such potentials provide examples where the potential $f$
is continuous but the Ruelle operator associated to it
do not have positive continuous eigenfunction.
When the eigenfunction do exists an explict formula for it is given in \cite{Wal}
(see page 1341).
We point out that all of the above formulas are analytical expressions and
even in the case where the r.h.s of (\ref{ee1}) is equal to $1$ an
explicit eigenfunction which is not continuous can be obtained
(because, for instance is $\infty $ just in the points $0^\infty$ and $1^\infty$
but finite elsewhere).
This extended sense of eigenfunction
will be considered here in {\bf the critical point} in some of our examples.

In \cite{BLM} it is analyzed the zero temperature limit for such family of potentials.
\medskip

We will define in the next section the double Hofbauer model which is obtained from a certain potential $g:\{0,1\}^\mathbb{N}=\Omega \to \mathbb{R}$, depending of two fixed parameters $\gamma $ and $\delta$.

The potential view of renormalization is presented in \cite{BLL}. The double Hofbauer potential is a fixed point for a renormalization operator. In this sense it is a model of special interest. We will explain briefly this point.

We define $H : \Omega=\{0,1\}^\mathbb{N} \to \Omega $ by: for $c_1\geq 2$
$$H((\underbrace{0,...,0}_{c_1},\underbrace{1, ... ,1}_{c_2}\underbrace{0,...,0}_{c_3},1,...))=
(\underbrace{0,...,0}_{2c_1},\underbrace{1,...,1}_{c_2}
\underbrace{0,...,0}_{c_3},1,\ldots),$$
and
$$
H((\underbrace{1,\ldots,1}_{c_1},\underbrace{0,\ldots,0}_{c_2}\underbrace{1,
\ldots,1}_{c_3},1,\ldots))=
(\underbrace{1,\ldots,1}_{2c_1},\underbrace{0,\ldots,0}_{c_2}
\underbrace{1,\ldots,1}_{c_3},1,\ldots).
$$

We define the renormalization operator $\mathcal{R}$  in the following way:
given the potential $V_1: \Omega \to \mathbb{R}$   we get  $V_2 = \mathcal{R}
(V_1)$ where for $x$ of the form $(\underbrace{0,...,0}_{c_1}\,1...)$, or $(\underbrace{1,...,1}_{c_1}\,1...)$, with $c_1\geq 2$:
$$V_2(x)=
V_1(\sigma(\, (H(x)))\,)+V_1(H(x)),$$
and for $x$ of the form $(01...)$ or $(10...)$ we set
$$V_2(x)=V_1(x).$$

This defines  $V_2 = \mathcal{R}
(V_1)$.

It is easy to see that for $\gamma $ and $\delta$ fixed  the corresponding
double Hofbauer potential $g$ is fixed for $\mathcal{R}$.
\medskip

In \cite{BLL} it is explained why this is the natural renormalization operator to be considered in the one-dimensional setting (which is inspired by the similar concept in Statistical Mechanics). It is more common in dynamics to consider the renormalization of the transformation dynamics (the M. Feigenbaum point of view) which is different from the reasoning described above.

\section{The Double Hofbauer Model}

Before present the model we need to introduce some notations.
We define two infinite collections of cylinder sets given by
\[
	L_n = \overline{\underbrace{000...0}_n 1}
	\quad\text{and}\quad
	R_n = \overline{  \underbrace{111...1}_n 0},
	\ \ \text{for all}\ n\geq 1.
\]
Note that these cylinders are disjoint and
$\cup_{n\geq 1}(L_n\cup R_n)=\Omega\setminus\{0^{\infty},1^{\infty}\}$.
To define the model we also need to fix two real numbers
$\gamma>1$ and $\delta>1$,
satisfying $\delta<\gamma$. Using these parameters
we can define a continuous potential $g_{\gamma,\delta}:\Omega\to\mathbb{R}$,
which is simply denote by $g$, in the following way: for any $x\in\Omega$
\[
g(x)
=
\begin{cases}
-\gamma \log \frac{n}{n-1}, &\text{if}\ x \in L_n,\ \text{for some}\ n\geq 2;
\\[0.1cm]
-\delta \log \frac{n}{n-1}, &\text{if}\ x \in R_n,\ \text{for some}\ n\geq 2;
\\[0.1cm]
-\log \zeta(\gamma), &\text{if}\ x \in L_1;
\\[0.1cm]
-\log\zeta(\delta), &\text{if}\ x \in R_1;
\\
0, &\text{if}\ x \in \{1^{\infty},0^{\infty}\},
\end{cases}
\]
where $\zeta(s)=\sum_{n\geq 1}1/n^s$.
By using the canonical identification of a point
of the symbolic space $\Omega$ with a point on the interval $[0,1]$
we can have a sketch of the graph of the Double Hofbauer potential
\begin{figure}[h!]
    \centering
   \includegraphics[scale=0.761,angle=0]{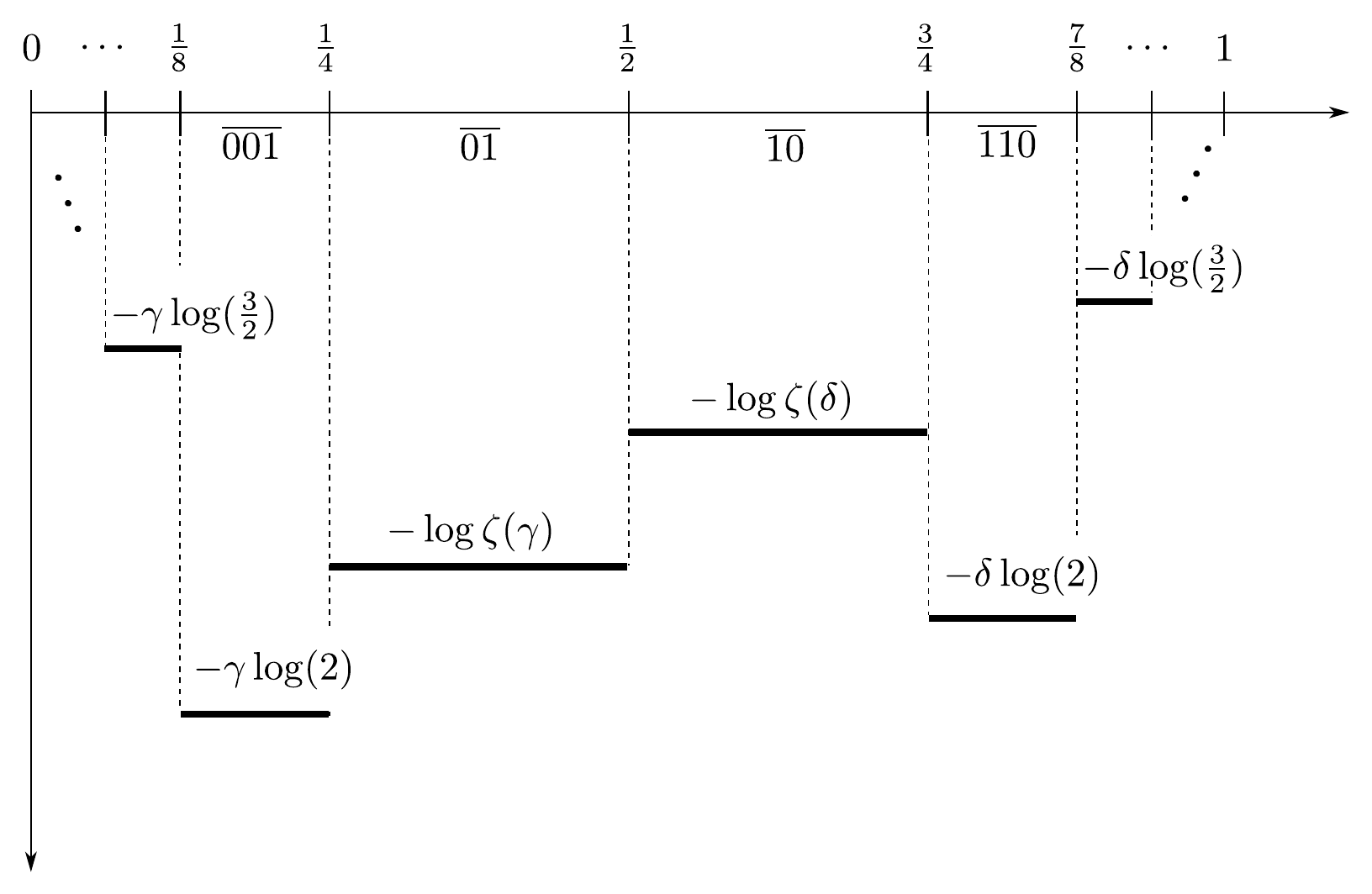}
   \caption{The Double Hofbauer potential represented on the interval $[0,1]$.}
\end{figure}

These potentials are particular cases of a more general class considered by
F.Hofbauer \cite{Hof} and P. Walters \cite{Wal}.
They are not H\"older pontetials and
have not summable variation (see \cite{Wal} or \cite{Sarig1}).
In fact, following the notation of the work \cite{Wal} (page 1325)
our potential are obtained by considering $a_n= -\gamma \log
\frac{n}{n-1}$, $a=0$, $c_n = -\delta \log \frac{n}{n-1}$, $c=0$, $b_n=  - \log
\zeta(\gamma)$, $d_n = - \log \zeta(\delta)$, $b= - \log \zeta(\gamma)$ and $d
= - \log \zeta(\delta)$, $n \in \mathbb{N}$.
For these choice of the sequences
$a_n,b_n,c_n,d_n$ and the constants $a,b,c,d$
Walters proved in \cite{Wal} that these potentials does not
belongs to W$(X,T)$ neither Bow$(X,T)$.
This fact follows from a simple application of the Theorem 1.1 page 1326 in \cite{Wal}.

When $\delta\neq \gamma$ there is a competition of two regimes.  This system has a much more complex behaviour that the one presented in the Fisher-Fedelhorf model described in \cite{FF}.

When $\gamma=\delta$ we say that the potential $g$ defined above is symmetric.
To avoid confusion we use the terminology Hofbauer model
(indexed by $\gamma$ which differs from the
Double Hofbauer model which is indexed by $\gamma$ and $\delta$)
for the family of potentials considered in \cite{L1}, \cite{BLL} and \cite{FL}.
There are, of course, some similarities between the two models.

For a while we will not assume that the potential is
symmetric. When we need such hypothesis we will make it clear.

\section[Phase Transition I. Non-differentiability of the Pressure]
{Phase Transition I.\\ Non-differentiability of the Pressure}

Note that the delta Dirac $\delta_{0^\infty} $ and $\delta_{1^\infty}$ are both
equilibrium states for $g$. On page 1341 in \cite{Wal}
it is presented the explicit expression for the
eigenfunction $\varphi_{\beta}$
associated to the eigenvalue $\lambda(\beta)$ for the
Ruelle operator of the potential $\beta g$.

One important issue is to show that the pressure
$p(\beta)$ of the potential $\beta\, g$
is equal to $ \log \lambda(\beta)$, where $\lambda(\beta)$
is the main eigenvalue of Ruelle operator for  $\beta\,g$.
This is the claim of Theorem \ref{w2}.
As we will see the Theorem \ref{w2} says that
$$
	\sup_{\mu\in \mathcal{M}_1(\sigma)}
		\left\{ h(\mu) + \beta\, \int_{\Omega} g\, d\mu
		\right\}
	=
	P(\beta\,g)
	\equiv
	p(\beta)
	=
	\log \lambda(\beta),
$$
where $\lambda(\beta)$ is the maximal eigenvalue
of $\mathcal{L}_{\beta\,g}$.
Moreover we can show that $P(1\, g)=0$
(see Theorems \ref{mainp} and \ref{w1}) and $P(\beta\, g)>0$ for
$\beta<1$.

\medskip

\noindent{\bf Remark.}
The function $\beta\mapsto P(\beta\,g)$ is non-increasing
and therefore for $\beta>1$ we have that
$P(\beta\, g)\leq 0$.
Since $h(\mu) + \beta\, \int_{\Omega} g\, d\mu=0$ for
$\mu=\delta_{0^\infty}$ and
$\mu=\delta_{1^\infty}$
we have that $P(\beta\,g)=0$ for $\beta>1$.

\medskip

We are interested in to determine equilibrium states
$\mu_{\beta}$ for the family of potentials $\beta\, g$,
when $\beta$ approaches $1$ from below.
If $0<\beta<1$ we have that
$p(\beta)\equiv P(\beta\, g)>0$, because
this function is monotone increasing and $P(g)=0$
(see Theorem \ref{w1}).
These observations give rise to natural questions as:
is there a selection in the limit when $\beta\to 1$ ?
In other words,
does $\mu_{\beta}$ converges to $\delta_{0^\infty}$
or to $\delta_{1^\infty}$
when $\beta\to 1$?
In the negative case is still the weak limit of
$\mu_{\beta}$ a
convex combination of these two probability measures ?
In what follows we show some computations
that will help to answer these questions.

Using the general result described by Corollary 3.5
in \cite{Wal} in our
particular case we have that the eigenvalue
$\lambda(\beta)=\exp(P(\beta\,g))\equiv \exp(p(\beta))$
satisfies the following identity
\begin{equation} \label{maindouble}
1
=
\frac{
\sum_{n=1}^\infty
\frac{n^{-\gamma\, \beta}}{\lambda(\beta)^{n}}
}
{\zeta(\gamma)^\beta}
\,\,\,
\frac{
\sum_{n=1}^\infty
\frac{n^{-\delta\,\beta}}{\lambda(\beta)^{n}}
}
{\zeta(\delta)^{\beta}}.
\end{equation}

\bigskip

For different values of the parameter $\gamma$ the above family provides
examples where we have phase transition of type 1 and 2 as defined
on the page \pageref{definicoes-phase-transition}.
In all these examples the critical inverse temperature
$\beta_c$ can be
explicitly obtained and its value is
$\beta_c=1$.
To be more precise
if  $2>\gamma>\delta>1$ there exists just two ergodic
equilibrium probabilities at $\beta=1$:
the Dirac measure concentrated on $0^\infty$ and the Dirac
measure concentrated on $1^\infty$.
In the case  $\gamma,\delta>2$ there exists three ergodic equilibrium
probabilities at $\beta=1$: the Dirac measure concentrated on $0^\infty$,
the Dirac measure concentrated on $1^\infty$ and a probability measure
$\mu_1$ which gives positive probability to open sets
(which is described latter).

\bigskip

Question:
Since $\beta=1$ is a critical temperature for the Double Hofbauer model
it is interesting to know what is the asymptotic behavior
of $p(\beta)=P(\beta\,g)$ when $\beta\to 1$ with $\beta<1$.
Of course, the other
lateral limit is trivial because pressure vanish for $\beta>1$.

\bigskip

\noindent
Let $p_1(\beta)=\log (\lambda_1(\beta))$
denote the pressure for the Hofbauer model
associated to $\gamma>1$ and
$p_2(\beta)=\log(\lambda_2(\beta))$
be the pressure for the Hofbauer model associated to
$\delta>1$ according to \cite{L1}.
If $\gamma,\delta>1$ the asymptotics expansions
of $p_1(\beta)$ and $p_2(\beta)$, when $\beta\to 1$ from below, were determined
in \cite{L1} (Theorem A) and for the reader's convenience we
give the statement of the theorem below:

\begin{theorem}\label{ptL}
For the Hofbauer model with
parameter $\gamma$ we have:

\begin{itemize}
\item[a)]
\(
\zeta(\gamma)^{\beta}
=\sum_{n=1}^{\infty}
\frac{e^{-n\,p_1(\beta)}}{n^{\gamma \beta}},\,\,
\text{for}\,\,\beta<1,
 \ \text{for any}\ \gamma>1,
\)

\item[b)]
If $1<\gamma<2$, then, when
$\beta\leq 1,\ \beta\rightarrow 1$,
we get that
\[
	p_1(\beta)
	=
	\left(
	\frac{\zeta(\gamma)\,
	\log\zeta(\gamma)-\gamma\,\zeta^{\prime}(\gamma)}
	     {-\Gamma(1-\gamma)}
	\right)^{\frac{1}{\gamma-1}}
	(1 -\beta)^{\frac{1}{(\gamma -1)}}
	+\ \text{\textnormal{high order terms}},
\]
\item[c)]
If $2<\gamma<3$,  then, when
$\beta\leq 1,\ \beta\rightarrow 1$,
there is constant $A$ so that
\[
p_1(\beta)=
\frac{\zeta(\gamma)\,
\log\zeta(\gamma)-\gamma\,\zeta^{\prime}(\gamma)}
{\gamma\,\zeta^{\prime}(\gamma-1)}
(1-\beta)+A(1-\beta)^{\gamma-1}(1+o(1)).
\]
In this case the entropy of the probability
measure $\mu$ (the equilibrium state
for the Hofbauer model) is
\[
\frac{\zeta(\gamma)\,
\log\zeta(\gamma)-\gamma\,\zeta^{\prime}(\gamma)}
{\gamma\,\zeta^{\prime}(\gamma-1)}.
\]
\end{itemize}
The case $\gamma>3$ can be also analyzed but the formulas are more complex.
\begin{itemize}
\item[d)] when $\beta\rightarrow 1$, $3\leq m<\gamma<m+1$, we get the expansion
\[
p_1(\beta)=A_{1}(1-\beta)+A_{2}(1 -\beta)^{2}
+\cdots+A_{m-1}
(1-\beta)^{m-1}+(C+o(1))(1-\beta)^{\gamma-1}.
\]
for some constants $A_{1},\ A_{2},\ \ldots, A_{m-1}$ and $C$.
\end{itemize}
\end{theorem}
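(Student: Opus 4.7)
The plan is to treat part (a) as an implicit equation for $p_1(\beta)$ and to extract the asymptotics via the Jonqui\`ere expansion of the polylogarithm. Part (a) itself should follow from Walters' eigenvalue identity (\ref{ee2}) applied to the Hofbauer potential: the choice $a_n=-\gamma\beta\log(n/(n-1))$ gives the telescoping $a_2+\cdots+a_{j+1}=-\gamma\beta\log(j+1)$, so $e^{a_2+\cdots+a_{j+1}}=(j+1)^{-\gamma\beta}$, and combined with $e^{d_{j+1}}=\zeta(\gamma)^{-\beta}$ the bracketed factor in (\ref{ee2}) collapses into the polylogarithm sum, yielding the stated identity after using $\lambda_1(\beta)=e^{p_1(\beta)}$.

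For parts (b)--(d) the central analytic input is the expansion
\begin{equation*}
\sum_{n\geq 1}\frac{e^{-ns}}{n^{\alpha}}
=\Gamma(1-\alpha)\,s^{\alpha-1}+\sum_{k=0}^{\infty}\frac{(-s)^k}{k!}\zeta(\alpha-k),
\end{equation*}
valid for $s\to 0^+$ and non-integer $\alpha>0$. Substituting $\alpha=\gamma\beta$, $s=p_1(\beta)$ in (a), and Taylor-expanding $\zeta(\gamma)^\beta$ and $\zeta(\gamma\beta-k)$ about $\beta=1$ in the small parameter $\epsilon=1-\beta$, the $k=0$ term $\zeta(\gamma\beta)$ combines with $\zeta(\gamma)^\beta$ to leave the master balance
\begin{equation*}
\Gamma(1-\gamma\beta)\,p_1^{\gamma\beta-1}-p_1\,\zeta(\gamma\beta-1)+O(p_1^2)
=-\epsilon\bigl[\zeta(\gamma)\log\zeta(\gamma)-\gamma\zeta'(\gamma)\bigr]+O(\epsilon^2).
\end{equation*}

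From this the regimes separate according to the location of $\gamma\beta-1$ relative to $1$. In case (b), $1<\gamma<2$ forces $\gamma\beta-1<1$, so $p_1^{\gamma\beta-1}\gg p_1$ as $p_1\to 0^+$; balancing this term against the right-hand side gives $p_1(\beta)\sim C(1-\beta)^{1/(\gamma-1)}$ with the explicit $C$ claimed (the sign matches because $\Gamma(1-\gamma)<0$ while the bracket is positive). In case (c), $2<\gamma<3$ reverses the hierarchy: the linear term $-p_1\zeta(\gamma\beta-1)$ dominates and fixes the linear coefficient $A_1$; substituting the ansatz $p_1\sim A_1\epsilon$ back into $\Gamma(1-\gamma\beta)p_1^{\gamma\beta-1}$ produces the next non-analytic correction of order $(1-\beta)^{\gamma-1}$. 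The entropy formula follows from $h(\mu_1)=-p_1'(1^-)$ by differentiating the identity $p(\beta)=h(\mu_\beta)+\beta\int g\,d\mu_\beta$. For (d) with $m<\gamma<m+1$, $m\geq 3$, I would bootstrap iteratively: match integer powers of $\epsilon$ coming from the polynomial part of the polylog against the Taylor expansion of $\zeta(\gamma)^\beta$, building $A_1,\dots,A_{m-1}$ order by order, while the non-analytic remainder $(C+o(1))(1-\beta)^{\gamma-1}$ continues to come from $\Gamma(1-\gamma\beta)p_1^{\gamma\beta-1}$.

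The main obstacle is the bootstrap/inversion: one must first secure an a priori rate of decay of $p_1(\beta)$ in each regime (using monotonicity of $\beta\mapsto p_1(\beta)$ and elementary estimates for the polylogarithm) so that the Jonqui\`ere expansion becomes a legitimate asymptotic series, and then invert the implicit equation iteratively to peel off successive orders. A further technical annoyance is that the exponent $\gamma\beta-1$ itself moves with $\beta$, so one must rewrite $p_1^{\gamma\beta-1}=p_1^{\gamma-1}\exp(-\gamma\epsilon\log p_1)$ and verify that the $\epsilon\log p_1$ correction is absorbed into the error terms at every order, and similarly control the $\beta$-dependence inside $\Gamma(1-\gamma\beta)$ and $\zeta(\gamma\beta-k)$.
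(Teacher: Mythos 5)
The paper itself offers no proof of this theorem: it is imported verbatim from \cite{L1} (Theorem A there) ``for the reader's convenience'', so the only benchmark is the method of that source. Your plan is in fact that method. Part (a) does follow from the one-factor version of the eigenvalue identity \eqref{ee2} exactly as you say (the telescoping $a_2+\cdots+a_{j+1}=-\gamma\beta\log(j+1)$ is correct, and it reproduces the paper's own equation \eqref{prob1}), and the Jonqui\`ere expansion of $\sum_n e^{-ns}n^{-\alpha}$ followed by the competition between $\Gamma(1-\gamma\beta)\,p_1^{\gamma\beta-1}$ and the linear term $-\zeta(\gamma\beta-1)\,p_1$ is precisely what produces the exponents $1/(\gamma-1)$, $1$ and $\gamma-1$ in cases (b)--(d). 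Your closing paragraph correctly identifies where the actual work lies (an a priori decay rate for $p_1$ so that the expansion may be inverted, and control of the moving exponent $\gamma\beta-1$ and of $\Gamma(1-\gamma\beta)$, $\zeta(\gamma\beta-k)$ as $\beta\to 1$); as written these steps are announced rather than carried out, so what you have is a sound roadmap rather than a finished proof.

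There is one concrete point you must confront rather than wave at. Executing your own master balance in case (c) -- equivalently, applying the implicit function theorem to $G(\beta,s)=\sum_n n^{-\gamma\beta}e^{-ns}=\zeta(\gamma)^{\beta}$ with $\partial_s G(1,0)=-\sum_n n\cdot n^{-\gamma}=-\zeta(\gamma-1)$ -- yields
\[
A_1=\frac{\zeta(\gamma)\log\zeta(\gamma)-\gamma\,\zeta'(\gamma)}{\zeta(\gamma-1)},
\]
which is positive, consistent with $p_1(\beta)>0$ for $\beta<1$ and with reading $A_1$ as the entropy of $\mu_1$ via $h(\mu_1)=-p_1'(1^-)$. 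The denominator $\gamma\,\zeta'(\gamma-1)$ printed in the statement is negative, so taken literally it would make both $p_1(\beta)$ for $\beta<1$ and the ``entropy'' negative, and it is not what your balance produces. So you cannot simply assert that the balance ``fixes the linear coefficient $A_1$'' as stated: either identify the normalization under which $\gamma\zeta'(\gamma-1)$ appears, or flag explicitly that your derivation gives $\zeta(\gamma-1)$ and that the printed constant (which also feeds into \eqref{plinna}) appears to carry a transcription error. Apart from this, the approach is the right one and, once the inversion is made rigorous, would deliver the theorem.
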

\noindent{\bf Remark.}
Obviously, $p_2(\beta)$ has similar properties.

\begin{theorem} \label{ld1} If $1<\delta< \gamma<2$, then,
$p'(\beta)=\frac{d}{d\beta}
\log \lambda(\beta)\to 0$, when
$\beta\to 1$, $\beta<1$. In  the  case  $\gamma>\delta>2$,
we have
\[
\lim_{\beta \to 1^{-}} p(\beta)
= \frac{1}{2}
	\left(
	\lim_{\beta \to 1^{-}} p_1(\beta)
	+
	\lim_{\beta \to 1^{-}} p_2(\beta)
	\right).
\]
Since $p(\beta)=0$ for $\beta>1$
there is a lack of analyticity of the pressure
$p(\beta)$ at $\beta=1$.

\end{theorem}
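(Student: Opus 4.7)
The starting point is equation~(\ref{maindouble}). Writing $s=p(\beta)=\log\lambda(\beta)$ and introducing the polylogarithm-type series
$$\Phi_r(s):=\sum_{n\geq 1}\frac{e^{-ns}}{n^{r}},$$
equation~(\ref{maindouble}) becomes
$$\Phi_{\gamma\beta}(s)\,\Phi_{\delta\beta}(s)=\zeta(\gamma)^{\beta}\,\zeta(\delta)^{\beta}, \qquad (\star)$$
which defines $s=p(\beta)$ implicitly, with boundary value $p(1)=0$. The plan is to extract the leading-order behaviour of $p(\beta)$ as $\beta\to 1^-$ by combining the implicit function theorem applied to $(\star)$ with the Mellin--Barnes expansion
$$\Phi_r(s)=\zeta(r)+\Gamma(1-r)\,s^{\,r-1}+\sum_{k\geq 1}\frac{(-s)^k}{k!}\,\zeta(r-k),\qquad s\to 0^+,$$
valid for non-integer $r>0$. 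This is the same analytic tool used to establish Theorem~\ref{ptL} in \cite{L1}.

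For the case $1<\delta<\gamma<2$, both $\zeta(\gamma-1)$ and $\zeta(\delta-1)$ are infinite, and the fractional-power term $\Gamma(1-r)\,s^{r-1}$ becomes the dominant deviation from $\zeta(r)$ in the expansion of $\Phi_r(s)$ as $s\to 0^+$. Substituting into $(\star)$ and Taylor-expanding both sides about $\beta=1$, to leading order one obtains
$$\zeta(\delta)\,\Gamma(1-\gamma)\,s^{\gamma-1}+\zeta(\gamma)\,\Gamma(1-\delta)\,s^{\delta-1}=-K\,(1-\beta)+\text{h.o.t.},$$
where $K=\zeta(\gamma)\zeta(\delta)\bigl(\log\zeta(\gamma)+\log\zeta(\delta)\bigr)-\gamma\zeta'(\gamma)\zeta(\delta)-\zeta(\gamma)\delta\zeta'(\delta)>0$. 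Since $\delta<\gamma$ forces $s^{\delta-1}\gg s^{\gamma-1}$ as $s\to 0^+$, the dominant balance $\zeta(\gamma)\,\Gamma(1-\delta)\,s^{\delta-1}\sim -K(1-\beta)$ yields
$$p(\beta)\sim C\,(1-\beta)^{1/(\delta-1)}$$
for an explicit constant $C>0$. Because $1/(\delta-1)>1$, differentiating gives $p'(\beta)\to 0$ as $\beta\to 1^-$, which proves the first claim.

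For the case $\gamma>\delta>2$, both $\zeta(\gamma-1)$ and $\zeta(\delta-1)$ are finite, so $\partial_s\Phi_r(s)=-\zeta(r-1)+O(s)$ extends continuously to $s=0$. The implicit function theorem applied to $(\star)$ at $(\beta,s)=(1,0)$ then shows that $p(\beta)$ is continuous at the left endpoint with $p(1^-)=0$. Since also $p_1(1^-)=p_2(1^-)=0$ by the single-Hofbauer analysis in Theorem~\ref{ptL}(c), the stated identity collapses to the tautology $0=\tfrac12(0+0)$. A refined IFT computation gives the further expansion $p(\beta)=A\,(1-\beta)+O\bigl((1-\beta)^{\delta-1}\bigr)$ for an explicit $A>0$, the subleading fractional power coming again from the Mellin--Barnes term $\Gamma(1-\delta)\,s^{\delta-1}$.

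The non-analyticity of $p$ at $\beta=1$ then follows in both cases: on $\beta>1$ the pressure is identically $0$, whereas on $\beta<1$ the leading behaviour involves a non-integer power of $(1-\beta)$ --- either $(1-\beta)^{1/(\delta-1)}$ in the first regime or the subleading $(1-\beta)^{\delta-1}$ in the second --- which is incompatible with analytic continuation across $\beta=1$. The principal technical obstacle will be the uniform control of the Mellin--Barnes expansion as $\beta$ varies, so that $r=\gamma\beta,\delta\beta$ slides through the relevant interval, together with the rigorous justification of the dominant-balance heuristic used above; this closely mirrors the Mellin-contour argument of \cite{L1}.
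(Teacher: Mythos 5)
Your proposal is correct in substance but follows a genuinely different route from the paper's. The paper never expands equation~(\ref{maindouble}) directly: it plays the double-model eigenvalue equation off against the two single-Hofbauer equations (\ref{prob1})--(\ref{prob2}), uses convexity and a contradiction argument to trap $\lambda(\beta)$ between $\lambda_1(\beta)$ and $\lambda_2(\beta)$, imports the asymptotics of $p_1,p_2$ as a black box from Theorem~\ref{ptL} (Theorem A of \cite{L1}) to get $p'(\beta)\to 0$ when $1<\delta<\gamma<2$, and obtains the averaging identity from the ratio (\ref{twoquo}) tending to $1$. You instead attack the implicit equation $\Phi_{\gamma\beta}(s)\Phi_{\delta\beta}(s)=\zeta(\gamma)^{\beta}\zeta(\delta)^{\beta}$ head-on with the polylogarithm singular expansion. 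What this buys: an explicit leading asymptotic $p(\beta)\sim C(1-\beta)^{1/(\delta-1)}$ in the first regime --- strictly more than the paper proves, and it addresses the question the paper leaves open about the expansion of $P(\beta g)$ in the non-symmetric case --- together with an explicit value of $p'(1^-)$ in the second regime. What it costs: you must redo, for the product equation, the uniform dominant-balance analysis that the paper outsources to \cite{L1}; you correctly flag this as the remaining technical work. Your observation that the $\gamma>\delta>2$ identity is vacuous as stated is apt: the paper's own argument only yields $2p(\beta)-p_1(\beta)-p_2(\beta)\to 0$, which likewise carries no information beyond continuity of the three pressures (the derivative version is the content of Theorem~\ref{ld2}). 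As a by-product, your expansion gives $p_1(\beta)\sim c_1(1-\beta)^{1/(\gamma-1)}\gg c_2(1-\beta)^{1/(\delta-1)}\sim p_2(\beta)$ near $\beta=1$, i.e.\ the opposite of the ordering asserted in (\ref{la}); the conclusion $p'\to 0$ is unaffected, since $p$ is trapped between $p_1$ and $p_2$ whichever way they are ordered.

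Two small repairs. First, for $1<r<2$ the analytically continued $\zeta(r-1)$ is finite (the pole is only at $r-1=1$); what diverges is the series $\sum_n n^{-(r-1)}$, i.e.\ $\Phi_r'(0^+)=-\infty$, and in the Mellin--Barnes expansion this divergence is carried by the $\Gamma(1-r)\,s^{r-1}$ term, not by the (finite) linear coefficient $-\zeta(r-1)$. Your dominant balance is unaffected, but the justification should be phrased this way. Second, before expanding you should record that $p(\beta)\to 0^+$ as $\beta\to 1^-$, which follows from the strict monotonicity of $s\mapsto\Phi_r(s)$ and continuity of the right-hand side of the implicit equation; the small-$s$ expansion is only available once that is in hand.
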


\begin{proof}
It is known from \cite{L1}  that
\begin{equation}\label{prob1}
1
=
\lambda_1(\beta)^{-1} \,
\frac{
1 + \sum_{n=2}^\infty
\frac{n^{-\gamma\,\beta}}{\lambda_1(\beta)^{n-1}}
}
{ \zeta(\gamma)^{\beta}}
\,=\,
\frac{
\sum_{n=1}^\infty
\frac{n^{-\gamma\, \beta}}{\lambda_1(\beta)^{n}}
}
{ \zeta(\gamma)^{\beta}} ,
\end{equation}
and
\begin{equation}\label{prob2}
1
=
\lambda_2(\beta)^{-1}
\frac{
1 + \sum_{n=2}^\infty
\frac{n^{-\delta\,\beta}}{\lambda_2(\beta)^{n-1}}
}
{\zeta(\delta)^{\beta}}
=\,
\frac{
\sum_{n=1}^\infty
\frac{n^{-\delta\, \beta}}{\lambda_2(\beta)^{n}}
}
{ \zeta(\delta)^{\beta}}.
\end{equation}
If we assume that
\begin{equation}\label{prob3}
\limsup_{\beta \to 1}
\frac{\lambda(\beta)}{\lambda_1(\beta)}>1
\qquad
\text{and}
\qquad
\limsup_{\beta \to 1} \frac{\lambda(\beta)}{\lambda_2(\beta)}>1,
\end{equation}
then by (\ref{prob1}) and (\ref{prob2}), respectively,
we have that
\[
	\limsup_{\beta \to 1}
	\frac{ \sum_{n=1}^\infty \frac{n^{-\gamma\, \beta}}
	{\lambda(\beta)^{n}}}{ \zeta(\gamma)^{\beta}} <1.
\qquad
\text{and}
\qquad
\limsup_{\beta \to 1} \,
\frac{ \sum_{n=1}^\infty \frac{n^{-\gamma\,\beta}}
     {\lambda(\beta)^{n}}}{ \zeta(\gamma)^{\beta}}
     <1.
\]
%
%
But this would contradict the equation (\ref{maindouble}).
Therefore, it is not possible
that both inequalities in (\ref{prob3}) holds.

The functions $p_1(\beta)$ and $p_2(\beta)$ are convex,
monotonous decreasing and differentiable in $\beta$,
for $\gamma,\delta>1$ and $t<1$.
It is also known that
for $1<\delta< \gamma<2$, it is true that
$ p_1'(\beta) \to 0$ and  $ p_2'(\beta) \to 0$, when
$\beta\to 1$, $\beta<1$.

From Theorem \ref{ptL} item b) and the L'Hospital Rule
follows that the limit
\begin{equation} \label{lala}
 \lim_{\beta \to 1^{-}}
 \frac{\lambda_1(\beta)}{\lambda_2(\beta)}
 =
 \lim_{\beta \to 1^{-}}
 \frac{c_1 + \frac{1}{\gamma-1}\, \log (1-\beta) }
      {c_2 + \frac{1}{\delta-1}\log (1-\beta) }
 =\frac{ \delta-1}{\gamma-1}<1,
\end{equation}
Therefore, for $\beta$ close to $1$ we have that
\begin{equation} \label{la}
\lambda_2(\beta) > \lambda_1(\beta).
\end{equation}
Since $\lambda(\beta), \lambda_1(\beta),$
and $ \lambda_2(\beta)$
are all convex as functions of $\beta$ it is not
possible that
$\lambda(\beta)\leq \lambda_1(\beta)$,
for $\beta<1$ close to $1$ (otherwise
would contradict   (\ref{maindouble})\,).
Therefore, we get that
$\lambda_2(\beta) \geq \lambda(\beta)\geq \lambda_1(\beta)$.
It follows from  Abel's Theorem that
\begin{equation} \label{twoquo}
\frac{ e^{2\, p(\beta)}}{e^{ p_1(\beta)} \, e^{p_2(\beta)}}
=
\frac{\lambda(\beta)^2}{\lambda_1(\beta)\, \lambda_2(\beta)}
=
\,
\frac{
1 + \sum_{n=2}^\infty \frac{n^{-\gamma\, \beta}}{\lambda(\beta)^{n-1}}
}
{1 + \sum_{n=2}^\infty \frac{n^{-\gamma\, \beta}}{\lambda_1(\beta)^{n-1}} }
\,\,\,
\frac{1 + \sum_{n=2}^\infty \frac{n^{-\delta\, \beta}}{\lambda(\beta)^{n-1}}}
{ 1 + \sum_{n=2}^\infty \frac{n^{-\delta\, \beta}}{\lambda_2(\beta)^{n-1}} }
\to
1,
\end{equation}
when, $\beta\to 1^{-}$,
where $p(\beta) = P(\beta \,g)\equiv P(\beta\, g_{\lambda,\delta}).$

Since $\lambda_1(\beta)>1$ it is not possible that
$\lambda(\beta)\geq \lambda_2(\beta) > \lambda_1(\beta).$
Indeed, if it was true then
the two quotients on the right side of (\ref{twoquo})
would be smaller than one and this is a contradiction.
For similar reason we can not have
$\lambda_2(\beta) >\lambda_1(\beta)\geq \lambda(\beta).$
Therefore, we get that
$\lambda_2(\beta) \geq \lambda(\beta)\geq \lambda_1(\beta).$
From these observations follows that
$p'(\beta)\to 0$, when $\beta\to 1$, $\beta<1$.
Note that when $\beta\to 1$ from below the
right hand side of ($\ref{twoquo}$) goes to
$1$. Therefore,
\[
\lim_{\beta \to 1^{-}} p(\beta)
=
\frac{1}{2} \left(
				\lim_{\beta \to 1^{-}} p_1(\beta)
				+
				\lim_{\beta \to 1^{-}} p_2(\beta)
			\right).
\]
The above equality shows the existence of phase
transition, in the case $\gamma, \delta>2$,
in the sense of lack of differentiability of the pressure.
\end{proof}

\medskip

\begin{theorem} \label{ld2} Consider the Double Hofbauer model.
If $2<\delta< \gamma<3$, then,
\[
\lim_{\beta \to 1^{-}} p(\beta)
=
\frac{1}{2}
\left(
	\lim_{\beta \to 1^{-}} p_1(\beta)
	+
	\lim_{\beta \to 1^{-}} p_2(\beta)
\right).
\]
Moreover,
\begin{equation} \label{plinna}
\lim_{\beta \to 1^{-}} p'(\beta)
\,=\,
\frac{1}{2}\,
\left(
	\frac{\zeta(\gamma)\,
	\log\zeta(\gamma)-\gamma\,\zeta^{\prime}(\gamma)}
	{\gamma \,\zeta^{\prime}(\gamma-1)}
	\,+\,
	\frac{\zeta(\delta)\,
	\log\zeta(\delta)-\delta\,\zeta^{\prime}(\gamma)}
	{\delta\,\zeta^{\prime}(\delta-1)}
\right).
\end{equation}
Since $p(\beta)=0$ for $\beta>1$ there is a lack
of differentiability  of the pressure
at $\beta=1$.
\end{theorem}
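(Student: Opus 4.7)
The argument parallels the proof of Theorem \ref{ld1}, sharpened to the level of derivatives. For the first equality $\lim_{\beta \to 1^-} p(\beta) = \frac{1}{2}(\lim p_1(\beta) + \lim p_2(\beta))$, I would repeat verbatim the convexity scheme of Theorem \ref{ld1}: combining (\ref{maindouble}) with the convexity of $\lambda,\lambda_1,\lambda_2$ forces $\lambda_1(\beta)\leq\lambda(\beta)\leq\lambda_2(\beta)$ for $\beta$ near $1^-$, and then identity (\ref{twoquo}) plus dominated convergence—legitimate now because $\gamma,\delta>2$ furnishes the convergent majorants $\zeta(\gamma),\zeta(\delta)$—yields $\lambda(\beta)^2/(\lambda_1(\beta)\lambda_2(\beta))\to 1$. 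In this regime Theorem \ref{ptL}(c) gives $p_j(1^-)=0$, so the first assertion is really a left-continuity statement at the critical point.

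Next I would read off the asymptotic derivatives of $p_1,p_2$ from Theorem \ref{ptL}(c). Writing the expansions as
\[
p_j(\beta)=A_j(1-\beta)+B_j(1-\beta)^{\gamma_j-1}(1+o(1)),\qquad \gamma_1=\gamma,\ \gamma_2=\delta,
\]
with $A_j$ the coefficients appearing in (\ref{plinna}), formal differentiation gives $p_j'(\beta)=-A_j-B_j(\gamma_j-1)(1-\beta)^{\gamma_j-2}(1+o(1))$; since $\gamma_j>2$ the correction vanishes at $\beta=1$, so $\lim_{\beta\to 1^-}p_j'(\beta)=-A_j$.

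The heart of the proof is to show that $p'(\beta)$ itself admits a limit at $1^-$ and that this limit equals $\tfrac12(p_1'(1^-)+p_2'(1^-))$. I would take the logarithmic derivative of (\ref{twoquo}), obtaining
\[
2p'(\beta)-p_1'(\beta)-p_2'(\beta)=\frac{d}{d\beta}\log F(\beta)+\frac{d}{d\beta}\log G(\beta),
\]
where $F,G$ are the two quotients of series on the right of (\ref{twoquo}). Differentiating each series term $n^{-\gamma\beta}\lambda(\beta)^{-(n-1)}$ generates two kinds of contributions: a $-\gamma\log n$ factor from $\partial_\beta n^{-\gamma\beta}$, and a factor $-(n-1)\,\lambda'(\beta)/\lambda(\beta)=-(n-1)p'(\beta)$ from differentiating $\lambda(\beta)^{-(n-1)}$, and similarly with $\lambda_1,\lambda_2$ in the denominators.

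The principal obstacle, and the place where $\gamma,\delta<3$ enters crucially, is justifying termwise differentiation and showing that $\frac{d}{d\beta}\log F(\beta)$ and $\frac{d}{d\beta}\log G(\beta)$ both vanish as $\beta\to 1^-$. The required uniform bounds come from the convergent majorants $\sum n^{-\gamma}\log n$ and $\sum n^{1-\gamma}$ (the latter needing precisely $\gamma>2$), together with the already established convergences $\lambda_j(\beta),\lambda(\beta)\to 1$ and the boundedness of $p'(\beta)$ near $1^-$ (a consequence of the convexity of $p$ sandwiched between the convex $p_1,p_2$). Dominated convergence then forces numerators and denominators of $F,G$, along with their $\beta$-derivatives, to share the same limits, so the log-derivatives vanish, which is exactly (\ref{plinna}). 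The lack of differentiability at $\beta=1$ follows immediately, since $p(\beta)\equiv 0$ for $\beta>1$ gives $p'(1^+)=0$ while (\ref{plinna}) gives a non-zero $p'(1^-)$.
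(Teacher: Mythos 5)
Your first assertion (the limit of $p(\beta)$ itself) is fine and is exactly the paper's route: the paper's proof of Theorem \ref{ld2} is literally a one-line pointer to the argument of Theorem \ref{ld1} via (\ref{twoquo}), supplemented by item c) of Theorem \ref{ptL}. The problem is in the step you correctly identify as ``the heart of the proof''. You claim that $\frac{d}{d\beta}\log F(\beta)$ and $\frac{d}{d\beta}\log G(\beta)$ both tend to $0$ because ``numerators and denominators of $F,G$, along with their $\beta$-derivatives, share the same limits.'' The functions do share limits (both numerator and denominator of $F$ tend to $\zeta(\gamma)$), but their derivatives do not: the numerator of $F$ carries $\lambda(\beta)^{-(n-1)}$ while the denominator carries $\lambda_1(\beta)^{-(n-1)}$, and differentiating in $\beta$ produces the factors $-(n-1)p'(\beta)$ and $-(n-1)p_1'(\beta)$ respectively. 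Since $p'(1^-)\neq p_1'(1^-)$ in general, one gets
\[
\lim_{\beta\to 1^-}\frac{d}{d\beta}\log F(\beta)
=\frac{\zeta(\gamma)-\zeta(\gamma-1)}{\zeta(\gamma)}\bigl(p'(1^-)-p_1'(1^-)\bigr),
\]
which is generically nonzero (and similarly for $G$ with $\delta$ and $p_2$). Your dominated-convergence bounds ($\gamma,\delta>2$ giving $\sum n^{1-\gamma}<\infty$) do justify the termwise differentiation, but they justify convergence to these nonzero constants, not to zero.

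If you carry your own computation to the end, setting $w_1=\zeta(\gamma-1)/\zeta(\gamma)$ and $w_2=\zeta(\delta-1)/\zeta(\delta)$, the identity $2p'-p_1'-p_2'=\frac{d}{d\beta}\log F+\frac{d}{d\beta}\log G$ yields in the limit $(w_1+w_2)\,p'(1^-)=w_1\,p_1'(1^-)+w_2\,p_2'(1^-)$, i.e.\ a \emph{weighted} mean of $p_1'(1^-)$ and $p_2'(1^-)$, which coincides with the arithmetic mean in (\ref{plinna}) only when $\gamma=\delta$. So the mechanism you propose cannot deliver (\ref{plinna}) as stated in the asymmetric case; the lack of differentiability at $\beta=1$ still follows (the limit is a nonzero combination of the negative quantities $p_j'(1^-)$), but not the precise constant. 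There is also a sign slip at the end: from your expansion $p_j(\beta)=A_j(1-\beta)+\cdots$ you get $\lim p_j'(\beta)=-A_j$, so $\tfrac12(p_1'(1^-)+p_2'(1^-))=-\tfrac12(A_1+A_2)$, the negative of the right-hand side of (\ref{plinna}). The paper itself gives no more detail than ``analogous to the previous one'' plus a monotonicity remark about the coefficient in Theorem \ref{ptL}(c), so it does not resolve this point either; but the specific justification you supply is the step that fails.
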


\begin{proof}
The proof is analogous to the previous one, but here we have to
use \ref{twoquo} and item c) of Theorem \ref{ptL}
and also to note that the function
\[
\gamma \mapsto
\frac{\zeta(\gamma)\,\log\zeta(\gamma)-\gamma\,\zeta^{\prime}(\gamma)}
{\gamma\,\zeta^{\prime}(\gamma-1)} <0
\]
is monotonous decreasing for $\gamma>2$.
\end{proof}

\section{The Main Eigenfunction}
The eigenfunction of the Ruelle operator (when it exists)
help us to understand important properties of the equilibrium state
of a given potential.
%
%
Continuous eigenfunctions of the Double Hofbauer model do exists
for any $0<\beta<1, \gamma>1$ and $\delta>1$.
Indeed, given real number $a>0$ let $b(\beta)$ be defined such that
\[
b(\beta)
=
\frac{a}{\lambda(\beta)}
\left(
	1 + \sum_{j=2}^\infty
	\frac{j^{-\gamma\,\beta}}{\lambda(\beta)^{j-1}}
\right)
\zeta(\gamma)^{-\beta}.
\]

Note that when $\beta \to 1$ we have that
$b(\beta)\to a$.
From the general result described
in \cite{Wal} applied to our particular case we get that
the eigenfunction $\varphi_{\beta}$, for $\beta<1$ and $n\geq 1$
is given by

\begin{equation}
\varphi_{\beta}(0^n 1...)
=
a
\,\,
\left(
1 + n^{\gamma\,\beta}
\,
\sum_{j=2}^\infty
\frac{(j+ n-1)^{-\gamma\, \beta}\,\,}{\lambda(\beta)^{j-1}}
\right),
\end{equation}
\begin{equation}
\varphi_{\beta}(1^q 0...)
=
b(\beta)\, \,\,
\left(
1 + q^{\delta\, \beta}
\,\sum_{n=2}^\infty
\frac{(n+ q-1)^{-\delta\,\beta}\,\,}{\lambda(\beta)^{n-1}}
\right),
\end{equation}
\[
	\varphi_{\beta}(0^\infty)=a
	\qquad\text{and}\qquad
	\varphi_{\beta}(1^\infty)=b(\beta).
\]
We remark that all
the above series are absolutely convergent because
$\lambda(\beta)>1$
and from the definitions of $\lambda(\beta)$ and $b(\beta)$
follows that
$\varphi_{\beta}(1 0...)= a \, (\lambda(\beta)-1) \zeta(\delta).$

For $\beta=1$ all the above is fine, up to
$\varphi_1(0^\infty) =\infty$
and $ \varphi_1(1^\infty)=\infty.$
In this case $\varphi_1$ is positive but
it has infinite values just in these two points.
Straightforward calculations shown that
\begin{equation}
\varphi_1 (0^n1..)\sim \frac{n}{\gamma-1}
\qquad\text{and}\qquad
\varphi_1 (1^n 0..)\sim\frac{n}{\delta-1}.
\end{equation}
From where it follows that
$\varphi_1 (0^\infty)=\infty$ and $\varphi_1 (1^\infty)=\infty.$
We point out that this will not be a big problem.
\bigskip

Note that if $\gamma>\delta$,
then $\zeta(\gamma)< \zeta(\delta)$, and
$
\varphi_{\beta}(1^\infty)
=
b(\beta)\to
\varphi_1(1^\infty)
=
\,
\varphi_1(0^\infty)$,
when $\beta\to 1^{-}$.
In the symmetric case when $1<\gamma=\delta<2$ it follows from Theorem 1
page 141 in \cite{L1} that
\[
	P(\beta\,g)
	\sim
	c\, (1-\beta)^{\frac{1}{\gamma-1}} +\ \text{high order terms},
	\quad
	\text{when}\ \beta \to 1^{-}.
\]
In the non symmetric case it is not clear how to obtain
the asymptotic expansion of $P(\beta\,g)$ near the critical
point.

We also observe that for $\beta=1$ the pressure vanish and
$\mathcal{L}_g (\varphi_1)=\varphi_1$, therefore
for any $x\in\Omega$ we have that
\[
	\sum_{\sigma(y)=x} e^{g(y)}\, \varphi_1(y)
	=
	\varphi_1(x),
\]
even for $x=0^\infty$ and $x=1^{\infty}$, because
both sides of the above equality are equal to $+\infty$.
By extending the equality below in the obvious way
we obtain for all $x\in \Omega$
\[
\sum_{\sigma(y)=x}
e^{g(y) + \log \varphi_1(y) - \log\varphi_1(\sigma(y))}
=
1.
\]
The function $J\equiv J_g$ defined by $\log J =g + \log \varphi_1 - \log  (\varphi_1
\circ \sigma)$ is called the Jacobian associated to $g$.
Using the above observations we can define an operator $\mathcal{G}$
which sends any continuous function $\psi$ to $\mathcal{G} (\psi)=\phi$,
where $\phi$ is a function defined on $\Omega\setminus \{0^{\infty},1^{\infty}\}$
by
\[
\phi(x)
=
\mathcal{G} (\psi)(x)
=
\sum_{\sigma(y)=x} J_g(y) \, \psi(y).
\]
Since $\mathcal{G}  (1)=1$
the dual operator $\mathcal{G}^*$ acts on the space of probabilities measures
mapping a probability measure $\mu$ on a probability measure
$\mathcal{G}^* (\mu)=\nu$ so that for any continuous function
$\psi$ the following equality holds
\[
\int_{\Omega} \, \psi d\, \mathcal{G}^* (\mu)
=
\int_{\Omega} \psi\, d\nu= \int_{\Omega}\mathcal{G}(\psi) d \mu.
\]
When $\gamma,\delta>2$,
there exists a probability $\mu_1$ (positive in open sets) which is fixed by
$\mathcal{G}^* $.
We can show that in this case this probability measure $\mu_1$ is an equilibrium state
for the potential $g$ (see \cite{Hof}, \cite{L1} and \cite{FL}).
More detailed description of the probability measure $\mu_1$
is given in the next section.

\section{The Eigenprobability}\label{sec2}

By using the Caratheodory Extension Theorem
we can define a finite measure $\nu_1$ on the Borelians of $\Omega$
such that for any natural number $q\geq 1$, we have
\begin{equation}
\nu_1 \left(\overline{0^q1}\right)= q^{-\gamma}
\qquad\text{and}\qquad
\nu_1\left(\overline{1^q0}\right)=q^{-\delta}.
\end{equation}
We chose below the values of $\nu_1(\overline{0})$
and $\nu_1(\overline{1})$ so that the measure $\nu_1$
satisfies $\mathcal{L}_g^*(\nu_1)=\nu_1$.
Note that this fixed point equation is equivalent
to say
that for any function of the form $I_{\overline{0^q1}}$
we have
\[
q^{-\gamma}
=
\int_{\Omega} I_{\overline{0^q1}}\, d \nu_1
=
\int_{\Omega} \mathcal{L}_g (\,I_{\overline{0^q1}}\,) d \nu_1,
\]
and, moreover for any function of the form $I_{\overline{1^q0}}$
we have
\[
q^{-\delta}
=
\int_{\Omega} I_{\overline{1^q0}}\, d \nu_1
=
\int_{\Omega} \mathcal{L}_g (\,I_{\overline{1^q0}}\,) d \nu_1.
\]

Let us compute the last integral above.
First by the definition of the Ruelle Operator we have
\[
\mathcal{L}_g (\,I_{\overline{0^q1}}\,)(x)
=
e^{g(0x)} I_{\overline{0^q1}}(0x)
+e^{g(1x)} I_{\overline{0^q1}}(1x)
=
e^{g(0x)} I_{\overline{0^q1}}(0x).
\]
The last expression is nonzero
if and only if $x\in L_{q-1}$.
Therefore,
\[
\int_{\Omega} \mathcal{L}_g (\,I_{\overline{0^q1}}\,) d \nu_1
=
\int_{\Omega} \,e^{g(0x)} I_{\overline{0^q1}}(0x) d \nu_1(x)
=
\frac{q^{-\gamma} }{(q-1)^{-\gamma} } (q-1)^{\gamma} =q^{-\gamma}.
\]
Analogously we can compute the integral of $I_{\overline{1^q0}}$.
For the probability measure $\nu_1$ to be a eigenprobability for $g$ it must also satisfy
\[
\nu_1 (\overline{0})
=
\int_{\Omega} I_{\overline{0}}\, d \nu_1
=
\int_{\Omega} \mathcal{L}_g (\,I_{\overline{0}}\,) d \nu_1.
\]
By using again the definition of the Ruelle Operator we have
\[
\mathcal{L}_g (\,I_{\overline{0}}\,)(x)
=
e^{g(0x)} I_{\overline{0}}(0x) +e^{g(1x)} I_{\overline{0}}(1x)
=e^{g(0x)} I_{\overline{0}}(0x).
\]
%
%
%
The point $x$ must be in the cylinder $\overline{1}$
or, in the cylinder  $\overline{0}$, which means in some of the sets
$L_n$, $n\geq 1.$
Then,
$$
\nu_1 (\overline{0})
=
\int_{\overline{1}}
e^{g(0x)} I_{\overline{0}}(0x) d \nu_1(x)
+
\sum_{n=1}^\infty
\int_{L_n} e^{g(0x)} I_{\overline{0}}(0x) d \nu_1(x).
$$

Therefore, it follows from the definitions of $g$
and the Lebesgue integral that
\begin{align*}
\nu_1 (\overline{0})
&=
\zeta(\gamma)^{-1} \nu_1(\overline{1}) + 2^{-\gamma}
\nu_1( \overline{01}) + \frac{3^{-\gamma}}{2^{-\gamma}}  \nu_1(\overline{001})+\ldots
\\
&=
\zeta(\gamma)^{-1} \nu_1(\overline{1}) +  2^{-\gamma} + 3^{-\gamma} +\ldots
\\
&=
\zeta(\gamma)^{-1} \nu_1(\overline{1}) + \zeta(\gamma)-1.
\end{align*}
In the same way we obtain
$\nu_1(\overline{1})= \zeta(\delta)^{-1} \nu_1(\overline{0}) +
\zeta(\delta)-1.$
Solving the system we get that
\[
\nu_1 (\overline{0})= \frac{\zeta(\gamma)^{-1} \zeta(\delta)
-\zeta(\gamma)^{-1} +\zeta(\gamma)-1}{1- \zeta(\gamma)^{-1} \zeta(\delta)^{-1}
}
\]
and
\[
\nu_1 (\overline{1})= \frac{\zeta(\delta)^{-1} \zeta(\gamma)
-\zeta(\delta)^{-1} +\zeta(\delta)-1}{1- \zeta(\delta)^{-1} \zeta(\gamma)^{-1}
} .
\]
This measure $\nu_1$ is not necessarily a probability measure.
Since it is a finite measure all we have to do is
multiplying it by a
suitable constant in order to get an eigenprobability.
From now on, we will assume that $\nu_1$ is a probability
measure.
If $\gamma>\delta$ then
some tedious manipulation yields that $\nu_1
(\overline{1})> \nu_1 (\overline{0})$.
This means that the eigenprobability $\nu_1$
gives more mass for regions where the potential is less flat.

Piecing together all the observations on this section we have proved
the following proposition.

\begin{proposition} The above defined probability $\nu_1$ satisfies
$\mathcal{L}_{\,g}^* (\nu_1)=\nu_1.$

\end{proposition}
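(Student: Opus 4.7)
The strategy is to verify $\mathcal{L}_g^*(\nu_1) = \nu_1$ as measures on $\Omega$ by checking the identity $\int \mathcal{L}_g(I_C)\, d\nu_1 = \nu_1(C)$ on a $\pi$-system of sets $C$ generating the Borel sigma-algebra. Since $\nu_1$ and $\mathcal{L}_g^*(\nu_1)$ are both finite Borel measures and cylinders form such a $\pi$-system, the uniqueness clause of Carath\'eodory's extension theorem will force the two measures to coincide once they are shown to agree on cylinders.

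The bulk of the verification has effectively been carried out in the discussion preceding the statement. For the distinguished cylinders $\overline{0^q 1}$ and $\overline{1^q 0}$, the definition of $\mathcal{L}_g$ combined with the fact that $g$ equals $-\gamma\log(q/(q-1))$ on $L_q$ (respectively $-\delta\log(q/(q-1))$ on $R_q$) yields, by a telescoping step, the identities $\mathcal{L}_g^*(\nu_1)(\overline{0^q 1}) = q^{-\gamma}$ and $\mathcal{L}_g^*(\nu_1)(\overline{1^q 0}) = q^{-\delta}$, matching the defining values of $\nu_1$. For $\overline{0}$ and $\overline{1}$, expanding the integral of $\mathcal{L}_g(I_{\overline{0}})$ over the partition $\overline{0} = \{0^\infty\} \sqcup \bigsqcup_{n\geq 1} L_n$ produces the pair of linear equations
\[
\nu_1(\overline{0}) = \zeta(\gamma)^{-1}\nu_1(\overline{1}) + \zeta(\gamma) - 1, \qquad \nu_1(\overline{1}) = \zeta(\delta)^{-1}\nu_1(\overline{0}) + \zeta(\delta) - 1,
\]
and the formulas displayed in the text for $\nu_1(\overline{0})$ and $\nu_1(\overline{1})$ are precisely the unique solution of this system (non-degeneracy follows from $\zeta(\gamma)\zeta(\delta) > 1$), so consistency holds on these cylinders as well.

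To extend the identity to a general cylinder $\overline{w}$, I would decompose $\overline{w}$ into a finite disjoint union of cylinders built out of concatenated maximal runs of $0$'s and $1$'s and reduce the computation inductively on the length of $w$ to the elementary identities above, exploiting the fact that $\mathcal{L}_g(I_{\sigma^{-1}(A)})(x) = (e^{g(0x)} + e^{g(1x)})\, I_A(x)$ for every Borel set $A$. The main obstacle is not any individual calculation but the measure-theoretic bookkeeping at the two fixed points $0^\infty$ and $1^\infty$: one must check that the partition used above does not conceal hidden mass at these singletons, which follows from the finiteness of $\zeta(\gamma) + \zeta(\delta)$ and the countable additivity of $\nu_1$. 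Once equality on the generating $\pi$-system of all cylinders has been established, Carath\'eodory uniqueness delivers the proposition.
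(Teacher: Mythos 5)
Your proposal follows essentially the paper's own route: the paper verifies the eigenmeasure identity on the distinguished cylinders $\overline{0^q1}$, $\overline{1^q0}$, $\overline{0}$, $\overline{1}$ (arriving at the same $2\times 2$ linear system and its displayed solution) and then declares the proposition proved by ``piecing together'' these observations. The only difference is that you make explicit the $\pi$-system/Carath\'eodory uniqueness step and the need to propagate the identity to arbitrary cylinders --- a point the paper leaves implicit --- and your sketch of that propagation is the natural one, since $\mathcal{L}_g(I_{\overline{a_0w}})(x)=e^{g(a_0x)}I_{\overline{w}}(x)$ with $g(a_0x)$ depending only on the leading run structure of $a_0x$.
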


\medskip

Now we need the following result:

\begin{proposition} \label{w2}
Suppose $g:\Omega\to\mathbb{R}$ is the
continuous function we consider above.
For any $\beta>0$ there exists a eigenprobability $\nu_{\beta}$
and eigenvalue $\Lambda(\beta)$
such that
$\mathcal{L}_{\beta\,g}^* \nu_{\beta} = \Lambda(\beta) \, \nu_{\beta}$.
Moreover,
$\Lambda(\beta)= \lambda(\beta)=\log p(\beta)$,
for all $0\leq \beta<1$.
\end{proposition}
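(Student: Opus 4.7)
I split the proof into three tasks: existence of an eigenpair $(\nu_\beta,\Lambda(\beta))$ for every $\beta\geq 0$; the identification $\Lambda(\beta)=\lambda(\beta)$ when $0\leq\beta<1$; and the pressure identity $p(\beta)=\log\lambda(\beta)$ in the same range.

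For existence, I would apply the Schauder--Tychonoff fixed point theorem to the normalized dual operator
\[
T(\mu)\;=\;\frac{\mathcal{L}_{\beta g}^{*}(\mu)}{\int\mathcal{L}_{\beta g}(1)\,d\mu},
\]
which maps $\mathcal{M}_1(\Omega)$ to itself. Since $g$ is continuous and $\Omega$ compact, $\mathcal{L}_{\beta g}(1)\in C(\Omega)$ is strictly positive, so the denominator is bounded below away from zero and $T$ is well-defined. The space $\mathcal{M}_1(\Omega)$ is weak-$*$ convex and compact, and continuity of $T$ in the weak-$*$ topology follows from continuity of $\mathcal{L}_{\beta g}:C(\Omega)\to C(\Omega)$. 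A fixed point $\nu_\beta$ of $T$ yields $\mathcal{L}_{\beta g}^{*}\nu_\beta=\Lambda(\beta)\,\nu_\beta$ with $\Lambda(\beta):=\int\mathcal{L}_{\beta g}(1)\,d\nu_\beta>0$.

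For the identification $\Lambda(\beta)=\lambda(\beta)$ when $\beta<1$, I would first check that Walters' condition (\ref{ee1}) applies to $\beta g$: substituting the double Hofbauer sequences (and using $a=c=0$) reduces (\ref{ee1}) to
\[
\zeta(\gamma\beta)\,\zeta(\delta\beta)\;>\;\zeta(\gamma)^{\beta}\,\zeta(\delta)^{\beta},
\]
which is strict for $0\leq\beta<1$ because $\zeta$ is decreasing and $\zeta(\gamma),\zeta(\delta)>1$. Walters' Theorem~3.1 then produces a strictly positive continuous eigenfunction $\varphi_\beta$ with $\mathcal{L}_{\beta g}\varphi_\beta=\lambda(\beta)\varphi_\beta$, where $\lambda(\beta)$ is the unique solution of (\ref{ee2}). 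Pairing the two eigenrelations gives
\[
\Lambda(\beta)\!\int\!\varphi_\beta\,d\nu_\beta
=\int\!\varphi_\beta\,d\mathcal{L}_{\beta g}^{*}\nu_\beta
=\int\!\mathcal{L}_{\beta g}\varphi_\beta\,d\nu_\beta
=\lambda(\beta)\!\int\!\varphi_\beta\,d\nu_\beta,
\]
and strict positivity of $\varphi_\beta$ forces $\Lambda(\beta)=\lambda(\beta)$.

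Finally, for $p(\beta)=\log\lambda(\beta)$ I would use the normalized Jacobian $J_\beta=\exp(\beta g-\log\lambda(\beta)+\log\varphi_\beta-\log\varphi_\beta\!\circ\!\sigma)$, which is continuous and strictly positive on $\Omega$ (this is exactly where the restriction $\beta<1$ matters, since $\varphi_\beta$ is bounded away from $0$ and $\infty$). A direct check shows $\mathcal{L}_{\log J_\beta}(1)=1$, the probability $\mu_\beta:=\varphi_\beta\,\nu_\beta/\!\int\!\varphi_\beta\,d\nu_\beta$ is $\sigma$-invariant, and the Rokhlin-type computation yields $h(\mu_\beta)+\beta\!\int\!g\,d\mu_\beta=\log\lambda(\beta)$, so by the variational principle $p(\beta)\geq\log\lambda(\beta)$. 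The matching upper bound is the classical inequality $h(\mu)+\int\log J_\beta\,d\mu\leq 0$ valid for every $\sigma$-invariant $\mu$ and every continuous normalized Jacobian; together with the vanishing of the coboundary $\log\varphi_\beta-\log\varphi_\beta\!\circ\!\sigma$ under invariant integration, this gives $h(\mu)+\beta\!\int\!g\,d\mu\leq\log\lambda(\beta)$ for all invariant $\mu$, hence $p(\beta)\leq\log\lambda(\beta)$. The main obstacle, and the reason for the restriction $\beta<1$, is precisely the boundedness of $\log\varphi_\beta$: at $\beta=1$ inequality (\ref{ee1}) becomes equality, $\varphi_1$ diverges at $0^\infty,1^\infty$, and every step of the pairing and the Jacobian argument must be revisited. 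A secondary technical point is the entropy upper bound, which I would verify by partition refinement applied directly to the continuous normalized Jacobian $\log J_\beta$, using only its continuity and not any regularity of $g$ itself.
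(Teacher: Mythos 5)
Your proposal is correct and follows essentially the same route as the paper: the Schauder--Tychonoff fixed point argument on the normalized dual operator for existence is identical, and your identification $\Lambda(\beta)=\lambda(\beta)$ via the eigenfunction pairing and your pressure identity via the normalized Jacobian are exactly the arguments the paper delegates to Section 2 and Proposition 3.4 of Parry--Pollicott. The only genuinely additional content you supply is the explicit verification that Walters' condition (\ref{ee1}) for $\beta g$ reduces to $\zeta(\gamma\beta)\,\zeta(\delta\beta)>\zeta(\gamma)^{\beta}\zeta(\delta)^{\beta}$, which is strict precisely for $\beta<1$; this correctly pins down why the identification is claimed only on $0\leq\beta<1$ and is a worthwhile detail the paper leaves implicit.
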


\begin{proof}
Since the potential $\beta\,g$ is continuous we
can define a transformation
$\mathcal{T}$ in the space of probabilities measures
over $\Omega$ such that
$\mathcal{T}(\mu) =\rho$, where
for any continuous function $f$ we have
\[
\int_{\Omega} f \, d\mathcal{T}(\mu)
=
\int_{\Omega} f d \rho
=
\frac{\displaystyle\int_{\Omega}
\mathcal{L}_{\beta\,g}(f) d \mu}
{\displaystyle\int_{\Omega} \mathcal{L}_{\beta\,g} (1) d \mu}.
\]

By the Thichonov-Schauder Theorem there exists
a fixed point $\nu_{\beta}$ for such
$\mathcal{T}$ and we have
$
\Lambda(\beta)
=
\int_{\Omega} \mathcal{L}_{\beta\,g} (1) d \nu_{\beta}
$.
Finally, by using the same reasoning of Section 2
in \cite{PP} we get that
$\lambda(\beta)= \Lambda(\beta)$.

It remains to prove that $\Lambda(\beta)$,
the eigenvalue of the dual operator,
satisfies $\log p(\beta)=  \Lambda(\beta).$
The proof is similar to the one given in \cite{PP}
to the Proposition 3.4. In \cite{PP} the
potential is Lipchitz but the same approach
can be adopted to the  potential we are considering here.
\end{proof}


\bigskip

\section[Phase Transition II. Non-Uniqueness of the Equilibrium State]
{Phase Transition II.\\ Non-Uniqueness of the Equilibrium State}\label{med}

In this section we show the existence of at least two equilibrium probability states
for the potential $\beta g$ at $\beta=1$.
Keeping the notation of the previous section,
for $n\geq 1$, $\beta>1$,
consider the finite measure $\mu_{\beta}$ such that
\[
\mu_{\beta}( L_n) = \nu_t( L_n) \, \varphi_{\beta} (0^n1\ldots),
\qquad
\mu_{\beta}( R_n) =  \nu_{\beta}( R_n)\, \varphi_{\beta} (1^n0\ldots)
\]
$\mu_{\beta} (\overline{0})=\sum_n \mu_{\beta}(L_n)$
and
$\mu_{\beta} (\overline{1})= \sum_n \mu_{\beta}(R_n)$.
This defines the probability measure $\mu_{\beta}$ in a unique way.
For instance, (not normalizing)
we have
$
\mu_1( \overline{01} ) = \varphi_1(01\ldots)
\nu_1  (\overline{01}) = \zeta(\gamma),$
and
$
\mu_1( \overline{001} ) = \varphi_1( 001\ldots)
\nu_1  (\overline{001}) =  \varphi_1( 001\ldots)\, 2^{-\gamma}.$
The bottom line is
\begin{equation} \label{mumu1}
\mu_1 ([0^q1])\sim q^{1-\gamma}\,\, \, \text{and}\,\,\, \mu_1([1^q0])\sim
q^{1-\delta}.
\end{equation}

\begin{proposition}\label{w3}
The above defined probability
$\mu_{\beta}$ is invariant for $\sigma$.

\end{proposition}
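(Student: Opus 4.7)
The plan is to reduce the problem to the standard transfer-operator identity and then handle the critical case $\beta=1$ by a truncation argument.

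First I would observe that the measure $\mu_\beta$ defined in the statement is precisely $\mu_\beta = \varphi_\beta \, \nu_\beta$ (after normalization). This uses the crucial fact that $g$ belongs to Walters' class: the value $\varphi_\beta(0^n 1 z)$ depends only on $n$ and not on the tail $z$, and likewise $\varphi_\beta(1^n 0 z)$ depends only on $n$. Consequently $\varphi_\beta$ is constant on each cylinder $L_n$, so
\[
\int_{L_n} \varphi_\beta \, d\nu_\beta = \varphi_\beta(0^n1\ldots)\,\nu_\beta(L_n) = \mu_\beta(L_n),
\]
and similarly for $R_n$. Since $\{L_n,R_n\}_{n\geq 1}$ together with $\{0^\infty,1^\infty\}$ generate the Borel $\sigma$-algebra of $\Omega$, this identifies $\mu_\beta$ with $\varphi_\beta\,\nu_\beta$ as a Borel measure (the two fixed points contribute zero $\nu_\beta$-mass by the explicit formulas of Section \ref{sec2}).

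Next I would carry out the classical computation. For any $f\in C(\Omega)$, using $\mathcal{L}_{\beta g}^*\nu_\beta=\lambda(\beta)\nu_\beta$ and the pointwise identity $\mathcal{L}_{\beta g}((f\circ\sigma)\,\varphi_\beta)(x)=f(x)\,\mathcal{L}_{\beta g}(\varphi_\beta)(x)=\lambda(\beta)f(x)\varphi_\beta(x)$,
\[
\int (f\circ\sigma)\,d\mu_\beta
=\int(f\circ\sigma)\,\varphi_\beta\,d\nu_\beta
=\frac{1}{\lambda(\beta)}\int\mathcal{L}_{\beta g}((f\circ\sigma)\varphi_\beta)\,d\nu_\beta
=\int f\,\varphi_\beta\,d\nu_\beta
=\int f\,d\mu_\beta.
\]
For $0<\beta<1$ the eigenfunction $\varphi_\beta$ is continuous and bounded, so every step above is justified, and $\sigma$-invariance follows immediately.

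The main obstacle is the critical value $\beta=1$, where $\varphi_1$ blows up at $0^\infty$ and $1^\infty$. Here I would first note that $\nu_1(\{0^\infty,1^\infty\})=0$ by the explicit cylinder formulas for $\nu_1$, so $\varphi_1$ is finite $\nu_1$-almost everywhere; moreover by the asymptotics \eqref{mumu1} the total mass $\mu_1(\Omega)$ is finite precisely when $\gamma,\delta>2$, which is the regime where this construction is asserted to yield a probability. To transport the invariance calculation across the singularity I would truncate, setting $\varphi_1^{(N)} = \varphi_1 \wedge N$, and define $\mu_1^{(N)} = \varphi_1^{(N)}\nu_1$. The eigenfunction identity $\mathcal{L}_g(\varphi_1)=\varphi_1$ holds pointwise on $\Omega\setminus\{0^\infty,1^\infty\}$ (and both sides equal $+\infty$ at the two fixed points, a $\nu_1$-null set), so the displayed computation still gives the invariance of $\varphi_1\nu_1$ once we pass to the limit $N\to\infty$ via monotone/dominated convergence, using $\varphi_1\in L^1(\nu_1)$ as the dominating bound on compact sets separated from the two fixed points. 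Justifying this limit exchange is the only delicate step; everything else is a direct consequence of the eigenfunction and eigenprobability equations already established in Sections \ref{sec1} and \ref{sec2}.
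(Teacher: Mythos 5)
Your core argument is exactly the paper's: for a continuous $f$ one writes $\int (f\circ\sigma)\,d\mu_\beta=\int(f\circ\sigma)\varphi_\beta\,d\nu_\beta=\lambda(\beta)^{-1}\int\mathcal{L}_{\beta g}((f\circ\sigma)\varphi_\beta)\,d\nu_\beta$ and uses $\mathcal{L}_{\beta g}((f\circ\sigma)\varphi_\beta)=f\,\mathcal{L}_{\beta g}(\varphi_\beta)=\lambda(\beta)f\varphi_\beta$; that is the whole of the paper's proof, and for $0<\beta<1$ your version is complete and correct. You add two things the paper does not put inside this proposition. First, you justify the identification $\mu_\beta=\varphi_\beta\nu_\beta$; this is welcome, but your stated reason is wrong: the countable partition $\{L_n,R_n\}_{n\ge1}\cup\{0^\infty,1^\infty\}$ does \emph{not} generate the Borel $\sigma$-algebra of $\Omega$ (it generates only the atomic $\sigma$-algebra of unions of these sets), so prescribing masses on these sets alone does not determine a Borel measure. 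The honest reading, consistent with the paper's own computation, is that $\mu_\beta$ is \emph{defined} as $\varphi_\beta\nu_\beta$ and the displayed cylinder values are consequences of $\varphi_\beta$ being constant on each $L_n$ and $R_n$. Second, you handle $\beta=1$ by truncating $\varphi_1$ and passing to the limit; the paper instead disposes of $\mu_1$ outside the proposition by observing that (for $\gamma,\delta>2$) it is a weak limit of the invariant probabilities $\mu_\beta$ as $\beta\to1^-$, hence invariant. Your truncation route is viable and arguably more self-contained, since it does not require first establishing the weak convergence $\mu_\beta\to\mu_1$; the limit interchange you flag is indeed the only point needing care, and monotone convergence on $\Omega\setminus\{0^\infty,1^\infty\}$ (a set of full $\nu_1$-measure) together with $\varphi_1\in L^1(\nu_1)$ does the job.
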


\begin{proof}

Given a continuous $f$ we have that
\[
\int_{\Omega} \, f \circ \sigma\, d \mu_{\beta}
=
\int_{\Omega}
\, (f \circ  \sigma)\, \varphi_{\beta}
\, d\nu_{\beta}
=
\int_{\Omega}
\,\frac{1}{\lambda(\beta)}
\mathcal{L}_{\beta g} [\,(f \circ  \sigma)\,\varphi_{\beta}\,]
d \nu_{\beta}.
\]
by using the fact that for any $x\in\Omega$
we have
$
\mathcal{L}_{\beta g} [\,(f \circ  \sigma)\,
\varphi_{\beta}\,] (x)
\,=\,
f(x) \mathcal{L}_{\beta g} [\,\varphi_{\beta}\,] (x)
$,
follows that the r.h.s. above is equal to
\[
\int_{\Omega}
\frac{1}{\lambda(\beta)}\,f\, \mathcal{L}_{\beta g}
(\varphi_{\beta}\,)
\,
d \nu_{\beta}
=
\int_{\Omega} \,\frac{1}{\lambda(\beta)}\,f\, \lambda(\beta)\,
\varphi_{\beta}
\,
d \nu_{\beta}
=
\int_{\Omega} \,f\,
d \mu_{\beta}.
\qedhere
\]
\end{proof}
\medskip

For $\gamma,\delta >2$,
the probability $\mu_1$ is also invariant  because is
the weak limit of invariant probabilities.
Remember that when $ \gamma,\delta<2$
such invariant probility measures do not exist
(the natural candidates would be invariant measure
which maximize pressure and are different from
the Delta Diracs).

\medskip

For $\beta=1$ we have a small problem:
the eigenfunction have the following asymptotic
behavior $\varphi_1 (0^n1..)\sim
n/(\gamma-1)$ and $\varphi_1 (1^n 0..)\sim n/(\delta-1)$ (compare
with expressions (4) and (8) pages 1077 and 1078 in \cite{FL}).
To get a probability measure using the above procedure, we have
to assume that $\gamma,\delta>2$.
In the cases $1<\gamma<2$ or $1<\delta<2$ the above method breaks down and
we do not get a probability measure at $\beta=1$,
just a $\sigma$-finite measure.
By assuming $\gamma,\delta>2$ we have that $\mu_1$ is an
equilibrium probability
(which gives positive mass to open sets at $\beta=1$)
and the entropy of $\mu_1$ is positive.
The lower semicontinuity of the  entropy (and the fact that
$p(\beta)\to 0$) implies that any weak limit
$\nu$ of $\mu_{\beta}$, when $\beta \to 1$, is an
equilibrium probability for $g$. Therefore, for $\gamma, \delta>2$ we have that
$\mu_1$ is an invariant and equilibrium probability for $g$.

An interesting question is: in the case $2>\gamma>\delta>1$ what happens with
the equilibrium probability $\mu_{\beta}$,
when $\beta \to 1^{-}$. We claim that $\mu_{\beta}$
selects $\delta_{1^\infty}$.
Indeed, any probability $\nu$ which is not $\delta_{0^\infty}$ or
$\delta_{1^\infty}$ is such that $\int_{\Omega} g\, d\nu<0$.
Now, we consider  for large and fixed $n$  and $\beta\sim 1$ the quotient
\[
\frac{\mu_{\beta} (\overline{1^n0})}{\mu_{\beta}(\overline{0^n1} )}
\sim
\frac{n^{-\delta}\, \frac{n}{\delta-1 } }
{n^{-\gamma} \, \frac{n}{\gamma-1 }}
\to
\infty.
\]
The above asymptotic expansion means that measure gives
much more mass to the sets closest to the point
$1^\infty$ than those close to the point $0^\infty$.
This shows our claim.


\medskip

The measure $\mu_1$ is a probability measure only if $\delta,\gamma>2$.
In this case by using continuity arguments we can prove
that $\mu_{\beta}$ converges to $\mu_1$, when $\beta\to 1$.
Therefore, $\mu_1$ is selected.
It is interesting to remark that the mass of $\mu_1 (\overline{1})$ is greater
than the mass $\mu_1 (\overline{0})$ using similar arguments we mentioned
above. This describes the influence of the flatness in
the phase transition point.
\medskip

The Theorem \ref{ld1} looks more natural taken in account the above analysis:
for $1<\delta< \gamma<2$  the
left derivative of pressure at $\beta=1$ is zero
(the only selected equilibrium state is $\delta_{1^\infty}$).
On the other hand, for $\gamma=\delta>2$ we have that
the left derivative of pressure at $\beta=1$ is non-zero (see
Theorem 1 page 141 in \cite{L1} for Hofbauer model where the
lack of differentiability is obtained when we
vary the parameter $\gamma$ ).
In this case we also have that
the correlations with respect to $\mu_1$
of some cylinder functions decays polynomially fast
(see for instance
\cite{L1} and Theorem 2.8
in \cite{FL} for the Hofbauer model).
The decay ratio is also explicitly determined as a function of $\gamma$.

\begin{proposition} \label{mainp}

If $\gamma,\delta>2$,
then the above defined  probability measure $\mu_{\beta}$,
$0<\beta<1$ is a fixed point for $\mathcal{G}^*_{\beta}
$, where for any $\psi$
$$
\mathcal{G}_{\beta} (\psi)(x)
=
\sum_{\sigma(y)=x }
J_{\beta , g} (y) \, \psi(y),
$$
and
$
\log J_{\beta,g}
=
\beta\, g \, +\, \log \varphi_{\beta}\, -\,
(\varphi_{\beta} \circ \sigma)-\log \lambda(\beta).
$
This meaning that
$\mathcal{G}^*_{\beta} (\mu_{\beta}) = \mu_{\beta}.$
Moreover, $\log
(\lambda(\beta))= P(\beta g)\equiv p(\beta)$
is monotonous decreasing.
\end{proposition}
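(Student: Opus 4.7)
The plan is to verify the fixed point equation $\mathcal{G}^*_\beta(\mu_\beta)=\mu_\beta$ by unraveling the definitions and invoking the two eigen-relations $\mathcal{L}_{\beta g}(\varphi_\beta)=\lambda(\beta)\varphi_\beta$ and $\mathcal{L}^*_{\beta g}(\nu_\beta)=\lambda(\beta)\nu_\beta$. First I would rewrite the Jacobian (interpreting the last term as $\log(\varphi_\beta\circ\sigma)$) in the more transparent form
$$
J_{\beta,g}(y)=\frac{e^{\beta g(y)}\,\varphi_\beta(y)}{\lambda(\beta)\,\varphi_\beta(\sigma(y))}.
$$
Summing over $\sigma$-preimages of a point $x$ and applying the eigenfunction equation shows $\mathcal{G}_\beta(1)=1$, so that $\mathcal{G}^*_\beta$ maps probability measures to probability measures.

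Next, for an arbitrary continuous $\psi$, I would compute $\int \mathcal{G}_\beta(\psi)\,d\mu_\beta$ by writing $d\mu_\beta = c\,\varphi_\beta\,d\nu_\beta$ (with $c$ the normalizing constant produced in Section~\ref{sec2}, which is finite precisely because $\gamma,\delta>2$). The factor $\varphi_\beta(x)$ from $\mu_\beta$ cancels the denominator $\varphi_\beta(x)$ appearing in $J_{\beta,g}$, and the inner sum becomes $\lambda(\beta)^{-1}\mathcal{L}_{\beta g}(\varphi_\beta\psi)(x)$. Integrating against $\nu_\beta$ and using the duality $\int\mathcal{L}_{\beta g}(f)\,d\nu_\beta=\lambda(\beta)\int f\,d\nu_\beta$ cancels the remaining $\lambda(\beta)$, leaving $c\int \varphi_\beta\psi\,d\nu_\beta=\int\psi\,d\mu_\beta$. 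This is the required fixed point identity. The role of the hypothesis $\gamma,\delta>2$ is only to ensure that $\mu_\beta$ is a bona fide probability measure, not merely a $\sigma$-finite one; the computation itself goes through formally in all cases.

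For the claim that $p(\beta)=\log\lambda(\beta)$ is monotone decreasing, I would combine two ingredients. Since $g\le 0$ on $\Omega$, for any invariant $\mu$ one has $\int g\,d\mu\le 0$, hence $h(\mu)+\beta_1\int g\,d\mu \ge h(\mu)+\beta_2\int g\,d\mu$ whenever $\beta_1\le\beta_2$; taking suprema gives $p(\beta_1)\ge p(\beta_2)$. Strict monotonicity on $(0,1)$ follows from convexity of $\beta\mapsto p(\beta)$ together with the facts (established in Theorem~\ref{w1} and the remark after Definition~\ref{ft-gibbs-measures-lam}) that $p(\beta)>0$ for $\beta<1$ while $p(1)=0$: if $p$ were constant on some subinterval of $(0,1]$, an equilibrium state at either endpoint would satisfy $\int g\,d\mu=0$, forcing it to be supported on $\{0^\infty,1^\infty\}$ and hence to have zero pressure, contradicting $p(\beta)>0$.

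I expect the only real obstacle to be bookkeeping with the normalizing constant $c$ and ensuring the two appearances of $\lambda(\beta)$ (from the eigenfunction equation and from the dual eigenvalue relation) cancel cleanly; the substantive dynamical input is exhausted by those two eigen-equations, both already available from Proposition~\ref{w2} and the construction in Section~\ref{sec2}.
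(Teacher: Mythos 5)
Your proof is correct and follows essentially the same route as the paper's: the fixed-point identity is verified by exactly the same cancellation of $\varphi_{\beta}$ and $\lambda(\beta)$ via the two eigen-relations $\mathcal{L}_{\beta g}(\varphi_{\beta})=\lambda(\beta)\varphi_{\beta}$ and $\mathcal{L}_{\beta g}^{*}(\nu_{\beta})=\lambda(\beta)\nu_{\beta}$, with the hypothesis $\gamma,\delta>2$ entering only to make $\mu_{\beta}$ a genuine probability measure. Your monotonicity argument (observing that $g\le 0$ makes $\beta\mapsto h(\mu)+\beta\int_{\Omega} g\,d\mu$ non-increasing for each fixed $\mu$ before taking the supremum) is marginally more elementary than the paper's appeal to $p'(\beta)\le 0$, but the substance is the same.
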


\begin{proof}
The proof follows from the fact that if $\nu_{\beta}$ is such that
$\mathcal{L}_{\beta\,g}^*(\nu_{\beta})
=
\lambda(\beta)\nu_{\beta}
$,
then the $\sigma$-invariant
probability
$
\mu_{\beta}=\varphi_\beta\, \nu_\beta$
is fixed for
$
\mathcal{L}^*_{
\beta \,g +
\log \varphi_{\beta} -
\log (\varphi_{\beta} \circ \sigma)
\,-\log \lambda(\beta)
}
$.
Indeed, given $f:\Omega \to \mathbb{R}$ we have
\begin{align*}
\int_{\Omega} \mathcal{G}_{\beta} (f) d\mu_{\beta}
&=
\int_{\Omega}
\mathcal{L}_{ \beta\, g \, +\, \log
\varphi_{\beta}\, -\, (\varphi_{\beta} \circ \sigma )
-\log \lambda(\beta)}\, (f)\,\varphi_{\beta} \,
d \nu_{\beta}
\\
&=
\int_{\Omega}
\frac{1}{\lambda(\beta)}\,\mathcal{L}_{\beta, g} \,
( f\, \varphi_{\beta})
\,\frac{\varphi_{\beta}}{\varphi_{\beta}}  d \nu_{\beta}
=
\int_{\Omega} f \varphi_{\beta}\, d \nu_{\beta}
=
\int_{\Omega} f\, d\mu_{\beta}.
\end{align*}

Note that
$
\log J_{\beta,g}
=
\beta\, g \, +\,
\log \varphi_{\beta}\, -\,
(\varphi_{\beta} \circ \sigma )-
\log \lambda(\beta)
$
is normalized, that is,
$
\mathcal{L}_{
\beta\, g \, +\, \log \varphi_{\beta}\, -\,
(\varphi_{\beta} \circ \sigma )-
\log \lambda(\beta)
}(1)
=1
$,
so we just shown that
\[
\mathcal{L}_{ \beta\, g \, +\, \log \varphi_{\beta}\, -\,
(\varphi_{\beta} \circ \sigma )-\log \lambda(\beta)}^*
(\mu_{\beta}) = \mu_{\beta}.
\]

The Theorem 3.4 in \cite{PP} can be used under our hypothesis.
In particular, we concluded that the Pressure of $\beta g$
is equal to $\log \lambda(\beta)$ (that is, $\log$ of the main eigenvalue).
Since the supremum in the pressure definition is taken over all the
shift invariant probability measures it follows that
\begin{align*}
0
&\geq
P(
\beta\, g \, +\, \log \varphi_{\beta}\,-\,
(\varphi_{\beta} \circ \sigma )-\log \lambda(\beta)
)
\\
&=
P (\beta\, g \, -\log \lambda(\beta) )
\\
&=
\sup_{\mu\in \mathcal{M}_{1}(\sigma)}
\left\{
h(\mu) + \int_{\Omega} [\beta g - \log(\lambda(\beta))]\, d\mu
\right\}
\\
&=
P(\beta\, g) - \log(\lambda(\beta)).
\end{align*}

On the other hand,
when $\mu = \delta_{0^\infty}\in \mathcal{M}_{1}(\sigma)$
we have that $h(\mu) + \beta\,\int_{\Omega} g d\,\mu=0$
and therefore $P(\beta\,g)=\log(\lambda(\beta))$.
This argument shown that there exists
at least two equilibrium states
at the critical point $\beta_c=1$.

The last statement follows from the nonpositivity
of the potential $g$ which implies that
the derivative $p'(\beta)$ is nonpositive for
$\beta<1$.
\end{proof}

As a consequence of this proposition we have the following
corollary.

\begin{corollary} \label{w1}
The pressure of the potential $g$
vanish, that is, $P(g) = 0.$
Moreover, there are at least three equilibrium
probabilities at the phase transition point $\beta=1$
for $\delta,\gamma>2$.
\end{corollary}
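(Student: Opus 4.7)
The plan splits into two claims. For the first, $P(g)=0$: the lower bound $P(g)\geq 0$ is immediate since $\delta_{0^\infty}$ has zero entropy and $g(0^\infty)=0$. For the matching upper bound I would invoke Proposition \ref{mainp}, which gives $P(\beta g)=\log\lambda(\beta)$ on $(0,1)$, and verify $\lambda(1)=1$ by substituting into the eigenvalue equation (\ref{maindouble}): both factors collapse to $\zeta(\gamma)/\zeta(\gamma)=1$ and $\zeta(\delta)/\zeta(\delta)=1$. Convexity of $\beta\mapsto P(\beta g)$ on $\mathbb{R}$ forces continuity, so letting $\beta\to 1^-$ yields $P(g)=0$.

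For the second claim, I exhibit three distinct equilibrium states at $\beta=1$ under the hypothesis $\gamma,\delta>2$. The Dirac masses $\delta_{0^\infty}$ and $\delta_{1^\infty}$ each attain $h(\mu)+\int g\,d\mu=0=P(g)$ because $g$ vanishes at both fixed points. The third candidate is the measure $\mu_1=\varphi_1\,\nu_1$ built in Sections \ref{sec2} and \ref{med}, which normalizes to a probability measure precisely when $\gamma,\delta>2$ (otherwise the asymptotics $\varphi_1(0^n1\ldots)\sim n/(\gamma-1)$ and $\varphi_1(1^n0\ldots)\sim n/(\delta-1)$ make the total mass diverge).

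To verify that $\mu_1$ is equilibrium I would pass to the $\beta\to 1^-$ limit from the subcritical regime. Propositions \ref{w3} and \ref{mainp} guarantee that $\mu_\beta$ is $\sigma$-invariant and satisfies $h(\mu_\beta)+\beta\int g\,d\mu_\beta=P(\beta g)$. The weak convergence $\mu_\beta\to\mu_1$ as $\beta\to 1^-$ (asserted in Section \ref{med}) combined with the continuity and boundedness of $g$ gives $\int g\,d\mu_\beta\to\int g\,d\mu_1$, whence $h(\mu_\beta)\to -\int g\,d\mu_1$. Upper semi-continuity of the Kolmogorov--Sinai entropy on the expansive full shift then yields $h(\mu_1)\geq -\int g\,d\mu_1$, and the variational bound $h(\mu_1)+\int g\,d\mu_1\leq P(g)=0$ forces equality. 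Distinctness of $\mu_1$ from the two Diracs is immediate from $\mu_1([0^q1])\sim q^{1-\gamma}>0$ in (\ref{mumu1}).

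The delicate point I anticipate is the weak convergence $\mu_\beta\to\mu_1$: it depends on cylinder-by-cylinder continuity of $\varphi_\beta$ and $\nu_\beta$ in $\beta$ together with the hypothesis $\gamma,\delta>2$ preventing escape of mass to $\{0^\infty,1^\infty\}$. The remaining ingredients — upper semi-continuity of entropy on the full shift, convexity (hence continuity) of pressure in $\beta$, and the direct substitution $\lambda=1$ into (\ref{maindouble}) — are all routine.
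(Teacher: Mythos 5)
Your argument is correct, but it reaches the two conclusions by a different route than the paper. For the upper bound $P(g)\leq 0$ the paper works directly at $\beta=1$: it observes that $g+\log\varphi_1-\log(\varphi_1\circ\sigma)$ is a normalized potential (with the caveat that $\varphi_1$ is infinite at the two fixed points, harmless because $\nu_1$ and $\mu_1$ have no atoms), so that $\mathcal{L}^*_{g+\log\varphi_1-\log(\varphi_1\circ\sigma)}(\mu_1)=\mu_1$ and Theorem 3.4 of \cite{PP} gives $P(g)\leq 0$ in one stroke; this simultaneously exhibits $\mu_1$ as an equilibrium state. You instead approach $\beta=1$ from below, using $P(\beta g)=\log\lambda(\beta)$ and continuity of the convex function $\beta\mapsto P(\beta g)$. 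That is a legitimate alternative which avoids handling the singular eigenfunction $\varphi_1$, but note that checking that $\lambda=1$ solves \eqref{maindouble} at $\beta=1$ does not by itself give $\lambda(\beta)\to 1$; you should add that the right-hand side of \eqref{maindouble} is strictly decreasing in $\lambda\geq 1$, so the solution branch is unique and any accumulation point of $\lambda(\beta)$ strictly above $1$ would make the product strictly less than $1$. For the third equilibrium state your weak-limit argument is essentially the one the paper runs in Section \ref{med} (rather than in the proof of the corollary itself); you correctly invoke \emph{upper} semi-continuity of the entropy on the expansive full shift, where the paper's text says ``lower semicontinuity,'' which is a slip. The one ingredient you lean on without proof, the convergence $\mu_\beta\to\mu_1$, is also only asserted by the paper; the paper's own normalized-operator argument for $\mu_1$ sidesteps this convergence entirely, which is what that approach buys.
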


\begin{proof}

Note that the potential
$\,g+ \log \varphi_1 - \log (\varphi_1 \circ \sigma)$
is normalized, that is
$\mathcal{L}_{ \,g+ \log \varphi_1 - \log (\varphi_1 \circ \sigma)\,}(1)=1$.
Moreover,
$\mathcal{L}_{\beta\,g}^*(\nu_1)= \nu_1$.
If we consider $\mu_1=\varphi_1\,\nu_1$,
then using the same reasoning of last proposition
we get that
\[
\mathcal{L}^*_{
\,g+ \log \varphi_1 - \log (\varphi_1 \circ \sigma)\,
}(\mu_1)=
\mu_1.
\]
Although $\varphi_1(0^\infty)$ and $\varphi_1(0^\infty)$ are  not defined
the above argument can be applied because $\nu_1$ and $\mu_1$ has no atoms.
By invoking the Theorem 3.4 of \cite{PP} again it follows
that $P(g)\leq 0.$ Indeed, for $\mu = \delta_{0^\infty}$,
we have that $h(\mu) + \int_{\Omega} g d  \,\mu=0$ and therefore $P(g)=0$.
We also have that in any case $\delta_{0^\infty}$ and $\delta_{1^\infty}$
are equilibrium states.
\end{proof}

\begin{remark}
Among the equilibrium probabilities obtained
above one of them assign positive
values to cylinders sets which is the one we
got from the Ruelle Operator.
Therefore, there are at least three ergodic equilibrium states; of
course, convex combinations of them are also equilibrium states. In this case,
as we mention before  (by
continuity arguments) we have that
$\mu_{\beta}$ converges to $\mu_1$, when $\beta\to 1$.
In this case there is selection of the limit probability in
the phase transition point.
\end{remark}

\medskip

\section[{Phase Transitions III. Non-Uniquess of the DLR Measure}]
{Phase Transitions III.\\ Non-Uniquess of the DLR Measure}

In this section we move towards a more probabilistic approach
to obtain the Gibbs measures. The exposition is
based on the Section 2.1 of \cite{Sarig1}.
We refer the reader to \cite{CL} for definitions and results
on DLR probabilities and its relation with Thermodynamic Limit probabilities.

\subsection*{Conditional Expectation: basic facts and notation}

Let $\mathcal{B}$ denote the Borel sigma-algebra on $\Omega= \{0,1\}^\mathbb{N}$
and $\mathcal{X}_n= \sigma^{-n} (\mathcal{B})$, that is,
the $\sigma$-algebra generated by the random variables
$X_n,X_{n+1},\ldots$ on the Bernoulli space, where
$X_n(x)=x_n$ for all $x\in \Omega$.
Fixed a probability measure $m$ defined over $\Omega$ and
given a cylinder set $\overline{a_0a_1\ldots a_{n-1}}$, where $a_j\in \{0,1\}$,
we define
\[
 	\alpha_{ \overline{a_0a_1\ldots a_{n-1}} }(x)
 	=
 	\mathbb{E}_m [ I_{ \overline{a_0a_1\ldots a_{n-1}}  }\, |\, \mathcal{X}_n],
\]
where $\mathbb{E}_m[f|\mathcal{X}_n]$ is the conditional
expectation of $f$ with respect to $m$ given the $\sigma$-algebra
$\mathcal{X}_n$.
From a elementary property of the conditional expectation for
any fixed $b_n,b_{n+1},..,b_r$ we have that
$$
\int_{X_n=b_n,...,X_r=b_r}
I_{ \overline{a_0a_1\ldots a_{n-1}}  }(x) \, dm(x)
=
\int_{X_n=b_n,...,X_r=b_r}
\alpha_{ \overline{a_0a_1\ldots a_{n-1}} }(x) d\, m(x).
$$
In other words
$$
m\left(\overline{a_0a_1\ldots a_{n-1} b_n\ldots b_r} \right)
=
\int_{X_n=b_n,...,X_r=b_r}
\alpha_{ \overline{a_0a_1\ldots a_{n-1}} }(x) d\, m(x).
$$

The measurable functions with respect to $\mathcal{X}_n$
are the functions of the form
$\varphi(\sigma^n (x))$ where $\varphi$ is Borel measurable.
So we can characterize $\alpha_{ \overline{a_0a_1\ldots a_{n-1}} }$
by the following property: for any
$\mathcal{B}$-measurable (or continuous) $\varphi:\Omega\to\mathbb{R}$
$$
\int_{\Omega} \,
\varphi( \sigma^n (x) )
I_{ \overline{a_0a_1\ldots a_{n-1}}  }(x) \, dm(x)
=
\int_{\Omega}
\varphi( \sigma^n (x) )
\alpha_{ \overline{a_0a_1\ldots a_{n-1}} }(x) d\, m(x).
$$

\begin{definition}
Given a potential $\phi$ we say that a probability measure $m$ is a DLR
probability for $\phi$ if for all
$n\in\mathbb{N}$ and any cylinder
set $\overline{x_0x_1\ldots x_{n-1}}$,
we have $m$-almost every
$z=(z_0,z_1,z_2,...)$ that
$$
\mathbb{E}_m ( I_{ \overline{x_0x_1\ldots x_{n-1}}  }\, |\, \mathcal{X}_n)(z)
=
\frac{
	e^{\phi(z)+ \phi(\sigma(z)) + ...+\phi ( \sigma^{n-1}(z) )}
}
{
	\sum_{\sigma^n(z) = \sigma^n (y)}\,
	e^{\phi(y)+ \phi(\sigma(y)) + \ldots +\phi ( \sigma^{n-1}(y) ) }
}.
$$
\end{definition}
The set of all DLR probabilities for
$\phi$ is denoted by $\mathcal{G}^{DLR}(\phi)$.
In general this set is not unique, but for a very
large class of potentials $\mathcal{G}^{DLR}(\beta\phi)$
is unique for $\beta$ large enough, by the Dobrushin Uniqueness
Theorem. So a possible sense of phase transition
is the existence of a inverse temperature $\beta$
so that $\mathcal{G}^{DLR}(\beta\phi)$
posses more than one element.

In the sequel we shown among other things
that the equilibrium probability $\mu_1$ is a
DLR probability for the potential $g$.
We refer the reader to \cite{CL} for more results about DLR probabilities.


\bigskip

\begin{definition}
A continuous positive function $J:\Omega\to\mathbb{R}$
such that for any $x\in\Omega$ we have
$\sum_{\sigma(y)=x} J(y)=1$ is called a Jacobian.
\end{definition}

Here we consider the case where $\phi=\log J$ where $J$ is a Jacobian
(the general case is analyzed in \cite{CL}).
In this case the Ruelle operator $\mathcal{L}_{\log J}$  (for the potential  $\log
J$) is defined as usual for any continuous function $\psi$ by
$$
\mathcal{L}_{\log J} (\psi)(x)
=
\sum_{\sigma(y)=x} J(y) \, \psi(y).
$$

By the definition of a Jacobian we have that $\mathcal{L}_{\log J} (1)=1$.
Remember that
the dual operator $\mathcal{L}_{\log J}^*$ acts on the space
of probability measures. Its action on $\mu$ give us a
probability measure $\mathcal{L}_{\log J}^* (\mu)=\nu$
such that for any continuous function $\psi$
we have
$$
\int_{\Omega} \, \psi d\, \mathcal{L}_{\log J}^* (\mu)
=
\int_{\Omega} \psi\, d\nu
=
\int_{\Omega} \mathcal{L}_{\log J}(\psi) d \mu.$$

 \begin{definition}
A probability $m$ is called a $g-$measure if it is a fixed point for
$\mathcal{L}_{\log J}^*.$
 \end{definition}

\begin{lemma}
For any Jacobian $J$ the operator $\mathcal{L}_{\log J}^*$
has a fixed point which is invariant probability measure for
$\sigma$.
\end{lemma}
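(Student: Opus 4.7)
The plan is to prove existence by a fixed-point theorem applied to the dual operator $\mathcal{L}_{\log J}^*$ on the space of probability measures, and then show that the resulting fixed point is automatically $\sigma$-invariant by exploiting the identity $\mathcal{L}_{\log J}(f\circ \sigma)=f$ that the Jacobian condition guarantees.

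First I would record the basic properties of $\mathcal{L}_{\log J}^*$. Since $J$ is continuous, $\mathcal{L}_{\log J}$ sends $C(\Omega)$ to $C(\Omega)$ continuously, so by duality $\mathcal{L}_{\log J}^*$ is a weak-$*$ continuous map on the dual space. The defining condition $\sum_{\sigma(y)=x} J(y) = 1$ means $\mathcal{L}_{\log J}(1)=1$, so for any probability measure $\mu$ we have $\int 1\, d\mathcal{L}_{\log J}^*(\mu) = \int \mathcal{L}_{\log J}(1)\, d\mu = 1$, and $\mathcal{L}_{\log J}^*(\mu)$ is again a probability measure (positivity of $J$ gives positivity of the dual). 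Thus $\mathcal{L}_{\log J}^*$ maps the set $\mathcal{M}_1(\Omega)$ of probability measures into itself.

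Next I would apply the Tychonoff-Schauder fixed point theorem (exactly as in the proof of Proposition \ref{w2}): the set $\mathcal{M}_1(\Omega)$ is a non-empty convex subset of $C(\Omega)^*$, which is weak-$*$ compact since $\Omega$ is compact (Banach-Alaoglu plus the Riesz representation theorem). Since $\mathcal{L}_{\log J}^*$ is weak-$*$ continuous and preserves $\mathcal{M}_1(\Omega)$, it admits a fixed point $\mu$.

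Finally, to prove $\sigma$-invariance, I would use the key calculation
\[
\mathcal{L}_{\log J}(f\circ \sigma)(x)
=
\sum_{\sigma(y)=x} J(y)\, f(\sigma(y))
=
f(x)\sum_{\sigma(y)=x} J(y)
=
f(x),
\]
valid for every continuous $f:\Omega\to\mathbb{R}$. Combining this with the fixed point property $\int \mathcal{L}_{\log J}(\psi)\, d\mu = \int \psi\, d\mu$ applied to $\psi = f\circ \sigma$ yields $\int f\, d\mu = \int f\circ \sigma\, d\mu$, i.e.\ $\mu$ is $\sigma$-invariant. There is no real obstacle here: the compactness step is standard, and the invariance step is forced by the normalization condition on the Jacobian. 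The only point worth stating carefully is the weak-$*$ continuity of $\mathcal{L}_{\log J}^*$, which follows because on the cylinder-function level (and hence uniformly on $C(\Omega)$ by Stone-Weierstrass) $\mathcal{L}_{\log J}$ is a bounded linear operator into $C(\Omega)$.
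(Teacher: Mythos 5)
Your proof is correct and follows essentially the same route as the paper: normalization $\mathcal{L}_{\log J}(1)=1$ ensures the dual preserves probability measures, Tychonoff--Schauder on the weak-$*$ compact convex set of probabilities gives the fixed point, and the identity $\mathcal{L}_{\log J}(f\circ\sigma)=f\,\mathcal{L}_{\log J}(1)=f$ yields $\sigma$-invariance, exactly as in the paper's Lemma \ref{lema-invariancia-jacobians}. Your remarks on weak-$*$ continuity and positivity merely make explicit what the paper leaves implicit.
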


\begin{proof}
Since $\mathcal{L}_{\log J} (1)=1$ then $\mathcal{L}_{\log J}^*$
takes probability measures to probability measures.
So existence of a fixed point for $\mathcal{L}_{\log J}^*$
is a straightforward application of the Tychonov-Schauder Theorem.
For the shift invariance of the fixed probability,
see below the Lemma \ref{lema-invariancia-jacobians}.
\end{proof}

\bigskip

If the Jacobian $J$ is in the H\"older class
the probability measure $m$ provided by the above
lemma is unique, that is, the operator
$\mathcal{L}_{\log J}^*$ has only one fixed point.
If $J$ is not in the H\"older class
this is not always true.

In \cite{BK},  \cite{Quas} and \cite{Gallo}
are presented examples where the Jacobian $J$
is {\bf continuous and strictly positive}
and such that there are at least two fixed points
for $\mathcal{L}_{\log J}^*$.
In these cases $\mathcal{G}^*( \log J)$ does not have
cardinality one and so we have phase transition in this sense.
In \cite{Gallo} is presented a criteria for the
existence of more than one $g$-measure.
We will show (see next theorem) how to use this
criteria to exhibit a class of examples, where we have phase
transition in the sense of existence of more than one
DLR probability.

In the examples presented here,
where we have more than one Thermodynamic Limit Gibbs 
probability (see Sections \ref{med} and \ref{TL})
the Jacobian can be zero in some finite subset
of $\Omega$. In this case $\log J$ will be not defined.

For the potentials $g$, with $\gamma,\delta>2$, which were considered in the
previous sections it is also true that the $\nu_1$ is the unique eigenprobability
for the corresponding dual Ruelle $\mathcal{L}_{g}^*.$ Therefore,  $\mathcal{G}^*(g)$ has cardinality one.


\medskip

\begin{proposition}\label{DLR}
Suppose that $J$ is positive and continuous.
If $\mathcal{L}_{\log J}^*(m)=m$ it follows
that $m$ is DLR probability  for the potential $\phi = \log J.$
In other words, $\mathcal{G}^* (\log J)
\subset \mathcal{G}^{DLR}(\log J).$
\end{proposition}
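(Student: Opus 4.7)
The plan is to test the candidate conditional expectation against arbitrary $\mathcal{X}_n$-measurable bounded functions, using the dual invariance $\mathcal{L}_{\log J}^{*}m = m$ iterated $n$ times, and then to identify the resulting expression with the right-hand side of the DLR formula.

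First I would record two preliminary observations. Because $J$ is a Jacobian, $\mathcal{L}_{\log J}(1)=1$, so iterating gives $\mathcal{L}_{\log J}^{n}(1)=1$, that is,
\[
\sum_{\sigma^{n}(y)=x} J(y)J(\sigma y)\cdots J(\sigma^{n-1}y)=1 \quad \text{for every } x\in\Omega.
\]
In particular, the denominator appearing in the DLR formula for $\phi=\log J$ is identically $1$. The second observation is that the fixed point $m$ is $\sigma$-invariant (this is the content of the Lemma \ref{lema-invariancia-jacobians} referenced just above, applied in the identity $\int f\circ\sigma\,dm=\int\mathcal{L}_{\log J}(f\circ\sigma)\,dm=\int f\,\mathcal{L}_{\log J}(1)\,dm=\int f\,dm$).

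Next, fix a cylinder $C=\overline{a_{0}a_{1}\ldots a_{n-1}}$ and let $\iota\colon\Omega\to C$ be the bijection $\iota(x)=(a_{0},\ldots,a_{n-1},x_{0},x_{1},\ldots)$, so that $\sigma^{n}\circ\iota$ is the identity and $\iota\circ\sigma^{n}$ is the identity on $C$. Set
\[
H(x):=J(\iota(x))\,J(\sigma\iota(x))\cdots J(\sigma^{n-1}\iota(x)).
\]
The key computation is that, for any bounded continuous $\psi:\Omega\to\mathbb{R}$, applying $\mathcal{L}_{\log J}^{n}$ to the function $z\mapsto \psi(\sigma^{n}z)\,I_{C}(z)$ gives
\[
\mathcal{L}_{\log J}^{n}\!\bigl(\psi(\sigma^{n}\cdot)\,I_{C}\bigr)(x)
=\psi(x)\sum_{\sigma^{n}(y)=x,\; y\in C}\prod_{k=0}^{n-1}J(\sigma^{k}y)
=\psi(x)\,H(x),
\]
because the only preimage of $x$ lying in $C$ is $\iota(x)$. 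Using $\mathcal{L}_{\log J}^{*n}m=m$, I would then integrate and obtain
\[
\int_{\Omega}\psi(\sigma^{n}z)\,I_{C}(z)\,dm(z)
=\int_{\Omega}\psi(x)\,H(x)\,dm(x).
\]

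Now I would invoke the $\sigma$-invariance of $m$ from the first step, which gives $\int\psi(x)H(x)\,dm(x)=\int\psi(\sigma^{n}z)\,H(\sigma^{n}z)\,dm(z)$. The function $z\mapsto H(\sigma^{n}z)$ is $\mathcal{X}_{n}$-measurable, and since the denominator in the DLR formula equals $1$, it is exactly the proposed conditional density $\alpha_{C}(z)$ of the DLR definition applied to $\phi=\log J$. Combining the two identities,
\[
\int_{\Omega}\psi(\sigma^{n}z)\,I_{C}(z)\,dm(z)
=\int_{\Omega}\psi(\sigma^{n}z)\,\alpha_{C}(z)\,dm(z)
\]
for every bounded continuous $\psi$, and by the monotone class theorem the same identity holds for every bounded $\mathcal{B}$-measurable $\psi$. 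Since every $\mathcal{X}_{n}$-measurable function is of the form $\psi\circ\sigma^{n}$, this is precisely the defining property of conditional expectation, yielding $\mathbb{E}_{m}[I_{C}\mid\mathcal{X}_{n}]=\alpha_{C}$ $m$-almost everywhere. As $C$ was an arbitrary cylinder, $m\in\mathcal{G}^{DLR}(\log J)$.

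I expect the only delicate point to be bookkeeping around the map $\iota$ and the simultaneous shift of both $\psi$ and $H$, together with the use of $\sigma$-invariance to move the shift past $H$; there is no deeper analytic obstacle because the Jacobian normalisation collapses the DLR denominator to $1$.
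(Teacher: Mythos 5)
Your proof is correct and follows essentially the same route as the paper: both arguments combine the duality $\int\mathcal{L}_{\log J}^{n}(F)\,dm=\int F\,dm$ with the $\sigma$-invariance of $m$ to show $\mathbb{E}_m[I_C\mid\mathcal{X}_n]=\mathcal{L}_{\log J}^{n}(I_C)\circ\sigma^{n}$, and then identify this (your $H\circ\sigma^n$) with the DLR kernel, whose denominator collapses to $1$ because $\mathcal{L}_{\log J}(1)=1$. The only difference is organizational: the paper proves the conditional-expectation formula for a general continuous $f$ via three separate lemmas, while you specialize directly to the indicator of the cylinder.
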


\begin{remark}

Therefore, if we get more than one eigenprobability we get more than one DLR Gibbs probability.

\end{remark}

In what follows we give the
proof of the Proposition \ref{DLR}.
The proof will be divided into three lemmas
which the statements and proofs are given
below.

\begin{lemma}
If $\mathcal{L}_{\log J}^*(m)=m$,
then for any continuous $f$ and $g$ we have
$$
\int_{\Omega} \mathcal{L}_{\log J}(f)\, g\, d m
=
\int_{\Omega} f\, (g \circ \sigma) \, dm.
$$
\end{lemma}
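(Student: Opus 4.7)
The plan is to reduce the identity to a purely pointwise statement about the transfer operator and then invoke the eigenprobability condition $\mathcal{L}_{\log J}^*(m) = m$ applied to the constant-eigenvalue $1$. The key algebraic fact underlying the argument is the ``pull-out'' property: whenever $h$ is continuous on $\Omega$ and $\sigma(y)=x$, one has $h(\sigma(y))=h(x)$, so $h(x)$ factors out of the sum defining $\mathcal{L}_{\log J}(\,\cdot\,)(x)$. I expect no real obstacle here; the lemma is essentially the standard adjoint identity for transfer operators, and the only point requiring minor care is the justification that $f\cdot(g\circ\sigma)$ is still a bona fide continuous function on which $\mathcal{L}_{\log J}$ can act.

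First, I would start from the definition
$$
\mathcal{L}_{\log J}(f)(x)\, g(x)
=
\sum_{\sigma(y)=x} J(y)\,f(y)\,g(x)
=
\sum_{\sigma(y)=x} J(y)\,f(y)\,g(\sigma(y)),
$$
where the last equality uses that $\sigma(y)=x$. The right-hand side is exactly $\mathcal{L}_{\log J}\bigl(f\cdot(g\circ\sigma)\bigr)(x)$. Thus one obtains the pointwise identity
$$
\mathcal{L}_{\log J}(f)\cdot g \;=\; \mathcal{L}_{\log J}\bigl(f\cdot(g\circ\sigma)\bigr),
$$
valid on all of $\Omega$, with both sides continuous since $J,f,g$ are continuous and $\sigma$ is continuous.

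Next I would integrate this identity against $m$ and apply the hypothesis that $m$ is fixed by $\mathcal{L}_{\log J}^*$. Using the defining property of the dual operator with the continuous test function $f\cdot(g\circ\sigma)$, one gets
$$
\int_{\Omega}\mathcal{L}_{\log J}(f)\cdot g\, dm
=
\int_{\Omega}\mathcal{L}_{\log J}\bigl(f\cdot(g\circ\sigma)\bigr)\, dm
=
\int_{\Omega}f\cdot(g\circ\sigma)\,d\,\mathcal{L}_{\log J}^*(m)
=
\int_{\Omega}f\cdot(g\circ\sigma)\,dm,
$$
which is exactly the claimed equality. The whole argument rests on the single pull-out identity and on applying the fixed-point relation to one well-chosen test function, so nothing delicate is involved.
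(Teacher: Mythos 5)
Your proof is correct and follows essentially the same route as the paper: both rest on the pull-out identity $\mathcal{L}_{\log J}\bigl(f\cdot(g\circ\sigma)\bigr)=g\cdot\mathcal{L}_{\log J}(f)$ followed by an application of the fixed-point relation $\mathcal{L}_{\log J}^*(m)=m$ to the test function $f\cdot(g\circ\sigma)$. You merely spell out the pointwise verification of the pull-out property in more detail than the paper does.
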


\begin{proof}
Since $\mathcal{L}_{\log J} (f\,(\, g \circ \sigma)) = g\, \mathcal{L}_{\log J}
(f)$, it follows from the definition of the dual operator that
\begin{align*}
\int_{\Omega} f\, (g \circ \sigma) \, dm
=
\int_{\Omega} f\, (g \circ \sigma)
\,d\mathcal{L}_{\log J}^*m
&=
\int_{\Omega}
\mathcal{L}_{\log J}\,(f\, (g \circ \sigma)\,) \, dm
\\
&=
\int_{\Omega}
\mathcal{L}_{\log J}(f)\, g\, d m.
\end{align*}

\end{proof}

\begin{remark}
In the Hilbert space $L^2(\Omega,\mathcal{B},m)$ the dual of $\mathcal{L}_{\log J} $ is the
Koopman operator $g \to \mathcal{K} (g) = g \circ \sigma.$
\end{remark}

\begin{lemma}\label{lema-invariancia-jacobians}
If $\mathcal{L}_{\log J}^*(m)=m$ then $m$ is invariant for
$\sigma$.

\end{lemma}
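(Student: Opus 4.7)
The plan is short: invoke the previous lemma with the constant function $f\equiv 1$. That lemma, which is proved just above using $\mathcal{L}_{\log J}^{*}(m)=m$ and the algebraic identity $\mathcal{L}_{\log J}(f\cdot (g\circ\sigma))=g\cdot \mathcal{L}_{\log J}(f)$, asserts that for any continuous $f,g:\Omega\to\mathbb{R}$,
\[
\int_{\Omega}\mathcal{L}_{\log J}(f)\,g\,dm=\int_{\Omega}f\,(g\circ\sigma)\,dm.
\]
Setting $f\equiv 1$ and using the defining property of a Jacobian, namely $\mathcal{L}_{\log J}(1)\equiv 1$, the left-hand side collapses to $\int g\,dm$ and the right-hand side becomes $\int (g\circ\sigma)\,dm$. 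Thus
\[
\int_{\Omega} g\,dm \;=\; \int_{\Omega} g\circ\sigma\,dm
\]
for every continuous $g$, which by the Riesz representation theorem (equivalently, by a standard approximation of indicators of Borel sets by continuous functions on the compact metric space $\Omega$) is precisely the statement that $\sigma_{*}m=m$.

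There is essentially no obstacle here: the content of the lemma is entirely contained in the two prerequisites already established, the dual fixed point equation $\mathcal{L}_{\log J}^{*}(m)=m$ and the normalization $\mathcal{L}_{\log J}(1)=1$. The only minor care point is the passage from ``$\int g\,dm=\int g\circ\sigma\,dm$ for continuous $g$'' to ``$m(\sigma^{-1}B)=m(B)$ for Borel $B$'', which is immediate on the compact metrizable Bernoulli space $\Omega$ since continuous functions are dense in $L^{1}(m)$ and indicators of Borel sets can be approximated in $L^{1}(m)$ by continuous ones. Hence $m\in\mathcal{M}_{1}(\sigma)$, completing the proof.
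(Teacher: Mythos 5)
Your proposal is correct and follows essentially the same route as the paper: the paper's proof is precisely the chain $\int g\circ\sigma\,dm=\int \mathcal{L}_{\log J}(g\circ\sigma)\,dm=\int g\,\mathcal{L}_{\log J}(1)\,dm=\int g\,dm$, which is exactly the preceding lemma specialized to $f\equiv 1$ together with the normalization $\mathcal{L}_{\log J}(1)=1$. Your added remark on passing from equality of integrals of continuous functions to equality of Borel measures is a standard point the paper leaves implicit.
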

\begin{proof}
Indeed, given any continuous function $g$ we have
\begin{align*}
\int_{\Omega} g \circ \sigma\, dm
=
\int_{\Omega} (g \circ \sigma)\, d\mathcal{L}_{\log J}^*(m)
&=
\int_{\Omega} \mathcal{L}_{\log J}(g \circ \sigma) \, dm
\\
&=
\int_{\Omega} g\, \mathcal{L}_{\log J}(1) \, dm
=
\int_{\Omega} g \, dm.
\qedhere
\end{align*}
\end{proof}

\begin{lemma}
If $\mathcal{L}_{\log J}^*(m)=m$
then for any continuous function $f:\Omega \to \mathbb{R}$ we have that
$$
\mathbb{E}_m ( f\, |\, \mathcal{X}_n)(x) = \mathcal{L}_{\log J}^n  (f) (\sigma^n
(x)).$$
\end{lemma}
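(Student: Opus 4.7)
The candidate function $\mathcal{L}_{\log J}^{n}(f)\circ\sigma^{n}$ is, by construction, $\mathcal{X}_{n}$-measurable, since it factors through $\sigma^{n}$. To verify that it equals the conditional expectation $\mathbb{E}_{m}(f\mid\mathcal{X}_{n})$ it suffices to check the defining duality against every bounded $\mathcal{X}_{n}$-measurable test function. Any such test function can be written as $\psi\circ\sigma^{n}$ where $\psi$ is bounded Borel measurable, and by a standard density/monotone class argument it is enough to handle $\psi$ continuous.

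Thus the plan is to show
\[
    \int_{\Omega} f\,(\psi\circ\sigma^{n})\,dm
    \;=\;
    \int_{\Omega}\bigl(\mathcal{L}_{\log J}^{n}(f)\circ\sigma^{n}\bigr)\,(\psi\circ\sigma^{n})\,dm
\]
for every continuous $\psi$. The left-hand side will be transformed by iterating the adjoint relation from the preceding lemma $n$ times. Writing $\psi\circ\sigma^{n}=(\psi\circ\sigma^{n-1})\circ\sigma$ and applying that lemma with $g=\psi\circ\sigma^{n-1}$ gives
\[
    \int f\,(\psi\circ\sigma^{n})\,dm
    \;=\;
    \int \mathcal{L}_{\log J}(f)\,(\psi\circ\sigma^{n-1})\,dm,
\]
and repeating the argument $n-1$ further times collapses this to $\int \mathcal{L}_{\log J}^{n}(f)\,\psi\,dm$.

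Then I invoke Lemma \ref{lema-invariancia-jacobians}, which guarantees that $m$ is $\sigma$-invariant (hence $\sigma^{n}$-invariant). Applying invariance to the bounded Borel function $\mathcal{L}_{\log J}^{n}(f)\,\psi$ gives
\[
    \int \mathcal{L}_{\log J}^{n}(f)\,\psi\,dm
    \;=\;
    \int \bigl(\mathcal{L}_{\log J}^{n}(f)\circ\sigma^{n}\bigr)\,(\psi\circ\sigma^{n})\,dm,
\]
which is exactly the right-hand side we wanted. Combining the two displays yields the desired identity for all continuous $\psi$, and extending by density to indicator functions of sets $\sigma^{-n}(A)$, $A\in\mathcal B$, confirms the defining property of conditional expectation.

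There is no serious obstacle here: the proof is a clean two-step manipulation (apply the Ruelle adjoint $n$ times, then apply $\sigma^{n}$-invariance). The only mild subtlety worth flagging is that the preceding lemma was stated for continuous $f,g$, whereas the conditional expectation is characterized by testing against indicators $1_{\sigma^{-n}(A)}$; this is handled routinely by approximating $1_{A}$ by continuous functions in $L^{1}(m)$ (or by a monotone class argument), since $\mathcal{L}_{\log J}$ is a positive bounded operator on $C(\Omega)$ satisfying $\mathcal{L}_{\log J}(1)=1$.
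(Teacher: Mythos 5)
Your proposal is correct and follows essentially the same route as the paper: both establish $\int f\,(\psi\circ\sigma^{n})\,dm=\int \mathcal{L}_{\log J}^{n}(f)\,\psi\,dm$ (the paper via the identity $\mathcal{L}_{\log J}^{n}(f\,(\psi\circ\sigma^{n}))=\psi\,\mathcal{L}_{\log J}^{n}(f)$ and $(\mathcal{L}_{\log J}^{*})^{n}m=m$, you by iterating the one-step adjoint lemma $n$ times, which is the same computation) and then invoke the $\sigma$-invariance of $m$ to rewrite the right-hand side as $\int(\mathcal{L}_{\log J}^{n}(f)\circ\sigma^{n})(\psi\circ\sigma^{n})\,dm$. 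Your added remarks on $\mathcal{X}_{n}$-measurability and the density/monotone-class extension only make explicit what the paper leaves implicit.
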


\begin{proof}
Let $g:\Omega \to \mathbb{R}$ be an arbitrary
continuous function. Since for any $n\in\mathbb{N}$
we have
$
\mathcal{L}_{\log J}^n (f\,(\, g \circ \sigma^n))
=
g\, \mathcal{L}_{\log J}^n  (f)
$
it follows that
\begin{align*}
\int_{\Omega} (g\circ \sigma^n (x))\, f(x) d m(x)
&=
\int_{\Omega}  \mathcal{L}_{\log J}^n \,
[\,(g\circ \sigma^n (x))\, f(x)\,] d m(x)
\\
&=
\int_{\Omega} g(x)\, \mathcal{L}_{\log J}^n \,(f) (x) \,d m(x)
\\
&=
\int_{\Omega}
g(\sigma^n (x))\,
\mathcal{L}_{\log J}^n \,(f) (\sigma^n (x)) \,d m(x),
\end{align*}
where in the last equality we use the fact that $m$ is invariant for $\sigma$.
From the previous lemma we get that
\begin{align*}
\mathbb{E}_m ( I_{\overline{a_0a_1\ldots a_{n-1}}}\, |\, \mathcal{X}_n)(x)
&=
\mathcal{L}_{\log J}^n ( I_{\overline{a_0a_1\ldots a_{n-1}}}) (\sigma^n (x))
\\[0.3cm]
&\hspace*{-3.2cm}=
\sum_{ \sigma^n (y) =\sigma^n (x)}
\exp\left(
\log J(y) + \log J (\sigma(y)) +...+ \log J (\sigma^{n-1} (y))
\right)\,
I_{\overline{a_0a_1\ldots a_{n-1}}}(y)
\\[0.3cm]
&\hspace*{-1.2cm}=
\exp\left(
\log J(a_0,a_1,..,a_{n-1}x)
+...+ \log J (\sigma^{n-1} (a_0,a_1,..,a_{n-1}x))
\right)\,
\\[0.3cm]
&\hspace*{-1.2cm}=
\displaystyle
\frac{
\exp\left(
\log J(a_0,a_1,..,a_{n-1}x) +...+ \log J (\sigma^{n-1} (a_0,a_1,..,a_{n-1}x))
\right)
}
{
\sum_{\sigma^n (y) =\sigma^n (x)}
\exp\left(
\log J(y) +...+ \log J (\sigma^{n-1} (y))
\right)
},
\end{align*}
where in the denominator
on last equality we use that $\mathcal{L}_{\log J}(1)=1$.
\end{proof}

Piecing together the three previous lemmas we
have proved the Proposition \ref{DLR}.
When $\phi=\log J$  is
such that $\mathcal{L}_{\log J}^*$ has two invariant probabilities then there
exist two DLR probabilities for
$\phi=\log J$,
meaning that we have phase transition in the DLR sense.

We observe that for the Double Hofbauer potential
the hypothesis of  Theorem 31 required to prove that
$\mathcal{G}^* (g) \subset \mathcal{G}^{DLR}(g)$
in \cite{CL} are also satisfied.

\bigskip
\section[Phase Transition IV. Non Uniqueness of TL via the Renewal equation]
{Phase Transition IV.\\ Non Uniqueness of T.L. - Renewal equation} \label{TL}

In what follows we introduce the so called finite volume
Gibbs measures with a boundary condition $y\in \Omega$ (see \cite{Sarig1}).
For a given  $n\in\mathbb{N}$
consider the probability measure in $(\Omega,\mathscr{F})$
so that for any $F\in\mathscr{F}$, we have

	\[
		\mu_{n}^{y}(F) = \frac{1}{Z_{n}^{y}}
		\sum_{ \substack{ x\in\Omega;\\ \sigma^n(x)=\sigma^n(y) }   }
		1_{F}(x)\exp(- (\,f(x)+ f(\sigma(x))+...+f(\sigma^{n-1}
(x)\,\,)\,)
	\]
where $Z_{n}^y$ is a normalizing factor called
partition function given by
	\[
		Z_{n}^{y}
		=
		\sum_{ \substack{ x\in\Omega;\\ \sigma^n(x)=\sigma^n(y) }   }
		\exp(-  (\,f(x)+ f(\sigma(x))+...+f(\sigma^{n-1} (x)\,\,)).
	\]

A straightforward computations shows
that $\mu_{n}^{y}(\cdot)$ is a probability measure.
This probability measure can be written
in the Ruelle Operator formalism in the
following way:
\begin{equation} \label{TL1}
		\mu_{n}^{y}(F)  =\, \frac{\mathcal{L}_{ f}^n \, (1_F )\,
(\sigma^n (y))}{ \mathcal{L}_{\,f}^n \, (1 )\, (\sigma^n (y)) }
\quad
\text{or}
\quad
\mu_{n}^{y} =\frac{1}{\mathcal{L}_{\, f}^n (1) (\sigma^n(y))} [\,
(\mathcal{L}_{\, f})^{*} \,]^n \, (\delta_{\sigma^n (y)}).
\end{equation}

\begin{definition} For a fixed $y\in \Omega$ any weak limit of the subsequences
$\mu_{n_k,}^y$, when $k\to \infty$ is called Thermodynamic Limit with boundary
conditions $y$. Now we consider the collection of all the Thermodynamic Limits
varying $y\in \Omega$ and take the closed convex hull of this collection.
This set is denoted by $\mathcal{G}^{TL}(f).$
\end{definition}

For a while we assume that the main eigenvalue $\lambda=1$ and $f=\log J$ where $J$ is a
Jacobian. In this case, $ Z^{y}_{n}=1$ for all $n\in\mathbb{N}$
and $y\in\Omega$.  If $f$ is H\"older it is known
(see \cite{PP}) that
for any fixed $y$ we have $\lim_{n \to \infty} \mathcal{L}_{f}^n (I_{[a]}) (y)=
m([a])$, where $m$ is the fixed point for the operator $\mathcal{L}_{f}^*$
(which is the equilibrium state for $\log J$) and $[a]$ is any cylinder set. In
this way the Thermodynamic Limit probability is unique (independent of the
boundary condition).
In other words, if $f=\log J$ is H\"older, then   for any $y\in \Omega$ we have
that
$$
\lim_{n \to \infty} \mu_{n,1}^{y} = \mu,
$$
where $\mu$ is the equilibrium state for $\log J$. In this case
$\mathcal{G}^{TL}(f)$ has cardinality one and there is no phase transition
in the sense of the number of probability measures in $\mathcal{G}^{TL}(f)$.
For a discussion about non-normalized potentials $f$ we refer the reader
to \cite{CL}.

We want to analyze what happens in the case of $f=\log J_g$ (of previous
section) which is not in the H\"older class.
The main question is: for $\beta =1$, the limits $\lim_{n \to \infty} \mu_{n}^{z_1}$
and $\lim_{n \to \infty} \mu_{n}^{z_2}$ can be different ?
The purpose of this section is to answer this question.
Before going into computational details we recall that
in this case we already know (from the previous section)
that the phase transition occurs in the DLR sense.

In this section we follow the results and ideas from \cite{FL}.
From now we denote $\log J$ the normalized Jacobian associated
to $g$, where $g$ is the Double Hofbauer potential.

\begin{definition}

We denote $\mathcal{G}^{TL}_{Per}$  the set of Thermodynamic Limits
probabilities obtained from all periodic points
$y$.
\end{definition}

We will show that the set $\mathcal{G}^{TL}_{Per}(\log J)$ is the convex hull
of the three probability measures
$\delta_{0^\infty}$, $\delta_{1^\infty}$ and $\mu_1$
defined on the Section \ref{med}.
Of course, this implies the existence of phase transition in the
Thermodynamic Limit sense.

An interesting remark is
in \cite{CL} it is shown that for the potential $\log J$
as we are considering here the set
$\mathcal{G}^{TL}(\log J)=\mathcal{G}^{DLR}(\log J)$.
Note that $\mathcal{G}^{*}(\log J)=\{\mu_1\}$
is strictly contained in $\mathcal{G}^{TL}_{Per}(\log J).$
The set of equilibrium states for the pressure is
also equals to $\mathcal{G}^{TL}_{Per}(\log J)$.

Our proof is based on some properties associated to a
kind of Renewal Equation.
Given a sequence $a:\mathbb{N}\to \mathbb{R}$ and a
probability measure $p$ defined on $\mathbb{N}$ we can
ask whether exists or not another sequence $A:\mathbb{N}\to \mathbb{R}$
satisfying the following associated Renewal Equation: for all $q\in \mathbb{N}$
\begin{equation}\label{rene} A(q) = [A(0) p_q + A(1) p_{q-1}  + A(2)\,  p_{q-2}
+...+A(q-2) p_2 +A(q-1) p_1 ]+ a(q).
\end{equation}

If $M= \sum_{q=1}^\infty q\, p_q$ then
the Renewal Theorem (Cap VII Theorem 6.1
in \cite{KT2} and Cap V Theorem 5.1 in \cite{KT1})
claims that
$$ \lim_{q\to \infty} A(q)= \frac{\sum_{q=1}^\infty a(q)}{M}.$$

\medskip

One important feature of the Renewal Theorem
is that the limit value of $A(q)$, as $q\to \infty$, is
provided without knowing the explicit values of the $A(q)$.

We want to investigate the Thermodynamic Limit
$$
\lim_{q \to\infty}\,
\mu_{q}^{y}([a])
=\lim_{q \to\infty}\,
\mathcal{L}_{ \log J}^q (I_{[a]})
(\sigma^q(y)),$$
for different points $y$ in the Bernoulli space
$\Omega$ for an arbitrary cylinder set $[a]$.

We are interested in to find a fixed cylinder $[a]$
for which the above limit does depends on $y\in\Omega$.
If such cylinder do exists one can say that the Double Hofbauer
model has  phase transition in the TL sense.
To accomplish this we will consider the cylinder $[0]$
and periodic points $y$ in the Bernoulli space.
We will show that:
\begin{proposition} \label{q01}
For the Double Hofbauer model
$$
\lim_{q \to\infty}\,
\mu_{q}^{0^\infty}([0]) = 1
\quad\text{and}\quad
\lim_{q \to\infty}\,
\mu_{q}^{1^\infty}([0]) = 0.
$$
\end{proposition}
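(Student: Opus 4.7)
The plan is to take advantage of the fact that, at $\beta=1$ with main eigenvalue $\lambda=1$, the paper works with the normalized Jacobian $J=e^{g}\varphi_1/(\varphi_1\circ\sigma)$, so that $\mathcal{L}_{\log J}(1)=1$ and consequently $\mu_q^y(F)=\mathcal{L}_{\log J}^q(I_F)(\sigma^q y)$. Since $0^\infty$ and $1^\infty$ are fixed by $\sigma$, we have $\sigma^q y=y$, so the proposition reduces to evaluating $\mathcal{L}_{\log J}^q(I_{[0]})$ at these two points.

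First I would pin down the values of $J$ at the relevant boundary points. On the one hand, $\varphi_1(10^\infty)=b(1)\zeta(\delta)$ is finite while $\varphi_1(\sigma(10^\infty))=\varphi_1(0^\infty)=+\infty$ (from the asymptotic $\varphi_1(0^n 1\ldots)\sim n/(\gamma-1)$ established in the section on the main eigenfunction); this forces $J(10^\infty)=0$, and symmetrically $J(01^\infty)=0$. On the other hand, the normalization identity $\mathcal{L}_{\log J}(1)(0^\infty)=J(0^\infty)+J(10^\infty)=1$ then yields $J(0^\infty)=1$, and similarly $J(1^\infty)=1$; the same conclusion is reached by taking the continuity limit $J(0^n 1\ldots)\to 1$ through the cylinders $L_n$.

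These four values drive everything. Expanding $\mathcal{L}_{\log J}^q(I_{[0]})(0^\infty)=\sum_{\sigma^q x=0^\infty}\bigl(\prod_{i=0}^{q-1}J(\sigma^i x)\bigr)I_{[0]}(x)$, each preimage of $0^\infty$ has the form $x=w\,0^\infty$ with $w\in\{0,1\}^q$. If $w\neq 0^q$, let $k^{\ast}$ be the position of the last $1$ in $w$; then $\sigma^{k^{\ast}}x=1\,0^\infty$, so the factor $J(10^\infty)=0$ annihilates the Jacobian product. The only preimage that survives is $x=0^\infty$ itself, contributing $J(0^\infty)^q\cdot I_{[0]}(0^\infty)=1$. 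Therefore $\mathcal{L}_{\log J}^q(I_{[0]})(0^\infty)=1$ for every $q$, which gives the first limit. The mirror argument at $y=1^\infty$ (with preimages $w\,1^\infty$ and the vanishing $J(01^\infty)=0$) leaves only $x=1^\infty\in[1]$, so $I_{[0]}(1^\infty)=0$ and $\mathcal{L}_{\log J}^q(I_{[0]})(1^\infty)=0$.

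The main obstacle is justifying $J(10^\infty)=0$ and $J(0^\infty)=1$ at the boundary points where $\varphi_1$ is infinite and the defining formula for $J$ is formally indeterminate; this has to be handled either by the continuity argument through the cylinders $L_n$ or by using the normalization identity $\mathcal{L}_{\log J}(1)\equiv 1$ at the fixed point $0^\infty$. Once these values are in hand, the conclusion is actually an exact identity for every $q$, not merely a limit; the renewal-equation framework that gives the section its title is in fact needed only for the finer statement that $\mathcal{G}^{TL}_{Per}(\log J)$ equals the convex hull of $\delta_{0^\infty}$, $\delta_{1^\infty}$ and $\mu_1$, where nontrivial periodic boundary conditions must be analysed.
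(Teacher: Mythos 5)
Your proof is correct and rests on exactly the same two facts the paper uses, namely $J(0^\infty)=J(1^\infty)=1$ and $J(10^\infty)=J(01^\infty)=0$; the paper runs a one-step induction over the two preimages of the fixed point, while you unroll that induction into a product over all $q$-preimages and locate the last symbol $1$, which is the same argument in a different packaging. Your extra care in deriving the boundary values of $J$ from the asymptotics of $\varphi_1$ and the normalization $\mathcal{L}_{\log J}(1)=1$ is a welcome addition, since the paper only asserts these values (they appear as items e) and f) in the proof of Proposition \ref{easy}).
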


This shows that there exist more than one probability on the set of Thermodynamic Limits. This means a phase transition in this sense.

The proof of the above proposition will be presented later.
Of course, the aim of this proposition
is to prove the existence of phase transition
for the Double Hofbauer model in the Thermodynamic Limit sense.
But much more can be said in this case and
we want to have the complete picture of this problem
concerning on what happens to this limit when
we consider other points $y\in\Omega$.
In this direction we obtained the following result.
\begin{proposition} \label{dif}
For any periodic points $y$ and $z\in \Omega$
(being not the fixed points) we have
$$
\lim_{q \to\infty}\,
\mu_{q}^{y}([0])
=
\lim_{q \to\infty}\,
\mu_{q}^{z}([0]).
$$
\end{proposition}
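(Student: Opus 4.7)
The plan is to follow the Renewal Theorem strategy of \cite{FL}, adapted to the double Hofbauer model. Fix a periodic point $y$ of period $p \geq 2$ that is not a fixed point, so that its periodic word $w_0 w_1 \cdots w_{p-1}$ contains both $0$ and $1$. The first move is to reduce the problem to the asymptotics of $A_z(q) := \mathcal{L}_{\log J}^{q}(I_{[0]})(z)$ for $z$ ranging over the (finitely many) points of the orbit of $y$: if $\lim_{q\to\infty} A_z(q)$ exists and takes a common value for \emph{all} non-fixed periodic $z$, then since $\sigma^q(y)$ cycles through the orbit of $y$ we obtain $\lim_q \mu_q^y([0]) = \lim_q A_{\sigma^q(y)}(q)$ equal to this universal value, and the same value arises for any $z$ with a different non-fixed periodic orbit.

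The main step is to set up a Renewal Equation for $A_z(q)$ by decomposing preimages $(a_0,\ldots,a_{q-1},z) \in \sigma^{-q}(z)$ according to the position of the \emph{last} transition, namely the largest $j\in\{0,\ldots,q-1\}$ with $a_j\ne a_{j+1}$ (using the convention $a_q := z_0$). Each such choice forces the terminal block $a_{j+1}\cdots a_{q-1}z_0 z_1\cdots$ to lie in one of the cylinders $L_k$ or $R_k$ with $k=q-j$; applying the explicit form $\log J = g + \log\varphi_1 - \log(\varphi_1\circ\sigma)$ and the identity $\mathcal{L}_g^{*}(\nu_1)=\nu_1$, the Jacobian weights attached to the tail telescope to the $\mu_1$-mass of this cylinder, multiplied by $A_z(j)$ for the prefix. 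After grouping the $L_k$- and $R_k$-contributions using the normalisation $\mathcal{L}_{\log J}(1)=1$, this produces a Renewal Equation of the form
\begin{equation}\label{renewal-plan}
A_z(q) \;=\; \sum_{k=1}^{q} p_k\, A_z(q-k) \;+\; a_z(q),
\end{equation}
where $p_k$ is a probability distribution on $\mathbb{N}$ built from $\mu_1(L_k)$ and $\mu_1(R_k)$, and $a_z(q)$ is the weight of the unique no-transition preimage $(z_0)^{q}z$.

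I would then verify the hypotheses of the Renewal Theorem (Chap.~VII, Theorem 6.1 of \cite{KT2}). By (\ref{mumu1}), $\mu_1([0^q 1])\sim q^{1-\gamma}$ and $\mu_1([1^q 0])\sim q^{1-\delta}$; under $\gamma,\delta>2$ this gives an aperiodic distribution $\{p_k\}$ with finite mean $M := \sum_k k\, p_k$. The Renewal Theorem then yields
$$\lim_{q\to\infty} A_z(q) \;=\; \frac{1}{M}\sum_{q=1}^{\infty} a_z(q).$$
Because $z$ is non-fixed periodic, its initial maximal constant run has bounded length, so $a_z(q)$ vanishes once $q$ exceeds this bound and $\sum_q a_z(q)$ reduces to a finite sum; a direct computation using the explicit formulas for $\varphi_1$ and $\nu_1$ shows this sum takes the same value for every non-fixed periodic $z$, producing the common limit $\mu_1([0])$.

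The chief obstacle is organising the last-transition decomposition into the \emph{single} Renewal Equation (\ref{renewal-plan}) rather than a coupled pair indexed by $z_0\in\{0,1\}$; this relies on combining the $L_k$ and $R_k$ weights via the fixed-point relations $\mathcal{L}_{\log J}(1)=1$ and $\mathcal{L}_g^{*}(\nu_1)=\nu_1$ (equivalently (\ref{maindouble}) at $\beta=1$) so that the resulting $p_k$ genuinely sum to $1$. Once the recursion closes, the Renewal Theorem delivers simultaneously the existence of the limit and its independence from the particular non-fixed periodic orbit.
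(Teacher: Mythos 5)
Your overall strategy --- reduce to the finitely many points of the orbit, set up a renewal equation for $A_z(q)=\mathcal{L}_{\log J}^{q}(I_{[0]})(z)$, and invoke the Renewal Theorem of \cite{KT2} --- is the paper's strategy, and your opening reduction of Proposition \ref{dif} to a common limit over non-fixed points is exactly how the paper passes through Proposition \ref{easy}. But the central step is not carried out, and as sketched it would fail. The last-transition decomposition does \emph{not} yield a single renewal equation $A_z(q)=\sum_k p_k A_z(q-k)+a_z(q)$: consecutive maximal runs alternate deterministically between $0$'s and $1$'s, so the prefix left after stripping the terminal run is rooted at a point of the \emph{opposite} type, and what actually comes out is the coupled pair (\ref{pp1})--(\ref{del}), e.g.
$\mathcal{L}_{\log J}^{q}(I_{[0]})(01\ldots)=\sum_{j}\frac{(q-j)^{-\gamma}}{\zeta(\gamma)}\,\mathcal{L}_{\log J}^{j}(I_{[0]})(10\ldots)+\frac{(q+1)^{-\gamma}r(q+1)}{\zeta(\gamma)}$.
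A step distribution $p_k$ obtained by ``combining the $L_k$ and $R_k$ weights'' into one sequence cannot repair this, because the run types are not chosen i.i.d.; the normalization $\mathcal{L}_{\log J}(1)=1$ does not make such a mixture sum to $1$. The paper's fix is to substitute one coupled equation into the other, so that one renewal step is a $0$-run \emph{followed by} a $1$-run: the resulting $p_q$ in (\ref{xy1}) is the \emph{convolution} $\sum_{i+j=q}\frac{i^{-\delta}}{\zeta(\delta)}\frac{j^{-\gamma}}{\zeta(\gamma)}$, which sums to $1$ precisely by (\ref{maindouble}) at $\beta=1$ and decays like $q^{1-\gamma-\delta}$. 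You flag this as the ``chief obstacle,'' but the resolution you propose is not the one that works.

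Two further claims are wrong or beg the question. First, the telescoped tail weights are normalized $\nu_1$-masses, $k^{-\gamma}/\zeta(\gamma)$, not the $\mu_1$-masses $\mu_1(L_k)\sim k^{1-\gamma}$ of (\ref{mumu1}); conflating them changes the decay exponent of $p_k$ and hence the condition under which the mean $M$ is finite. Second, $a_z(q)$ does not vanish for large $q$: the no-transition preimage $(z_0)^q z$ exists for every $q$ and contributes a term of order $q^{1-\gamma}$ (namely $(q+m)^{-\gamma}r(q+m)/(\cdots)$ with $r(q)\sim q/(\gamma-1)$), so $\sum_q a_z(q)$ is a genuinely infinite series, and the equality $\sum_q a(q)=\sum_q b(q)$ for the two base points is a non-trivial identity, not a consequence of periodicity. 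Finally, for periodic points outside $[01]\cup[10]$ the paper does not build new renewal equations at all: it propagates the known limit $K$ by one-step relations such as $\mathcal{L}_{\log J}^{q}(I_{[0]})(10\ldots)=J(010\ldots)\,\mathcal{L}_{\log J}^{q-1}(I_{[0]})(010\ldots)+J(110\ldots)\,\mathcal{L}_{\log J}^{q-1}(I_{[0]})(110\ldots)$ and induction on the initial block. Your plan asserts the conclusions of these steps (``a direct computation shows this sum takes the same value for every non-fixed periodic $z$'') without supplying them, so the argument has a genuine gap at its core.
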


As they were stated the above propositions characterize all the
possible Thermodynamic Limit values for the cylinder $[0]$
for any periodic boundary conditions.
It is possible to show more: for any cylinder set $[a]$
the analogous result is true, but its proof requires a more elaborate
calculation and we will not present it here.

\medskip

The proof of the Proposition \ref{dif}
will follow easily from:
\begin{proposition} \label{easy} The limit
$$\lim_{q \to\infty}\,\mathcal{L}_{ \log J}^q (I_{[a]})(y) )$$
is the same for any point of the form  $y=  \underbrace{000...0}_n 1...$ or
$y=\underbrace{111...1}_n 0...$.
\end{proposition}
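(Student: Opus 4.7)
The central idea is that the Double Hofbauer potential has a renewal structure: since $g$ and $\varphi_1$ depend essentially only on the first block of their argument, the normalized transfer operator $\mathcal{L}_{\log J}$ induces a reverse-time Markov chain whose jumps between blocks form a Markov renewal process. My plan is to set up a renewal equation of the form (\ref{rene}) for $A_y(q):=\mathcal{L}_{\log J}^q(I_{[a]})(y)$ and then apply the Renewal Theorem quoted in Section \ref{TL}.

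For $y=0^n1z\in L_n$, the one-step recursion $A_y(q+1)=J(0y)A_{0y}(q)+J(1y)A_{1y}(q)$ has $0y\in L_{n+1}$ and $1y\in R_1$. Iterating along the $L_n\to L_{n+1}\to L_{n+2}\to\cdots$ branch and stopping at the first $1$-prepending yields
$$
A_y(q) \;=\; \sum_{k=0}^{q-1}\Bigl(\prod_{j=1}^k J(0^{n+j}1z)\Bigr) J(10^{n+k}1z)\,A_{10^{n+k}1z}(q-1-k) \;+\; r_n(q),
$$
where $r_n(q)$ is the boundary contribution from trajectories which never prepend a $1$ within $q$ steps. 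Applying the analogous decomposition to each $A_{10^{n+k}1z}$ on the $R_1\to R_2\to\cdots$ branch, and alternating thereafter, converts the recursion into a Markov renewal equation in which each renewal corresponds to a switch of block type. Using the telescoping identity $J^{(k)}(x)=e^{S_k g(x)}\,\varphi_1(x)/\varphi_1(\sigma^k x)$ (valid since $\lambda=1$), the products $\prod_j J(0^{n+j}1z)\cdot J(10^{n+k}1z)$ simplify to expressions depending on $n$ and $z$ only through the overall prefactor $1/\varphi_1(y)$. Consequently the kernel of the resulting renewal equation is intrinsic to the model and independent of $y$, while the inhomogeneous drive $a_y$ has cumulative mass $\sum_q a_y(q)$ which likewise factors independently of $n$ and $z$.

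By the Renewal Theorem, $\lim_{q\to\infty} A_y(q)=\sum_q a_y(q)/M$, with $M$ the mean renewal time, and this limit is independent of $y\in L_n$ for any $n$ and tail $z$. The symmetric case $y=1^m0w\in R_m$ is handled identically by interchanging the roles of $0$ and $1$; the two limits coincide because the coupled renewal equations share the same stationary solution (after one renewal the chain lies in $L_1\cup R_1$ regardless of the initial block type). The main obstacle will be the careful bookkeeping required to verify that the cumulative drive $\sum_q a_y(q)$ is truly invariant under the choice of initial block, together with checking the hypotheses of the Renewal Theorem — in particular finiteness of $M$, which reduces to the condition $\gamma,\delta>2$ already imposed for $\mu_1$ to be a probability. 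The asymptotics $\varphi_1(0^n1\ldots)\sim n/(\gamma-1)$ and $\varphi_1(1^n0\ldots)\sim n/(\delta-1)$ established in the previous section supply the integrability estimates needed for both tasks.
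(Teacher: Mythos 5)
Your proposal follows the same overall strategy as the paper: decompose the preimage tree at the times when the prepended symbol switches block type, use the telescoping of $\log J = g + \log\varphi_1 - \log\varphi_1\circ\sigma$ (with $\lambda=1$) to evaluate path weights, and invoke the Renewal Theorem. Two concrete points need repair. First, the claim that the resulting kernel ``is intrinsic to the model and independent of $y$'' fails for the first jump: telescoping gives
\[
\prod_{j=1}^{k}J(0^{n+j}1z)\cdot J(10^{n+k}1z)\;=\;\frac{(n+k)^{-\gamma}}{n^{-\gamma}\,r(n)},
\]
whose dependence on the jump length $k$ is through $(n+k)^{-\gamma}$ rather than $k^{-\gamma}$, so the shape of the first-jump law genuinely depends on $n$ and not only through a prefactor $1/\varphi_1(y)$. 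What rescues the argument is that this law is proper (since $n^{-\gamma}r(n)=\sum_{m\geq n}m^{-\gamma}$, the weights sum to $1$) and that after the first switch the walk sits in $[10]$, where $\mathcal{L}_{\log J}^{t}(I_{[0]})$ is constant. Hence $A_y(q)$ is a convex combination of the values $A(q-k)=\mathcal{L}_{\log J}^{q-k}(I_{[0]})(10\ldots)$ plus a vanishing boundary term, and the general-$y$ case follows from the $[10]$ case by dominated convergence alone; no renewal theorem, and no invariance of $\sum_q a_y(q)$, is needed at that stage, so your flagged ``main obstacle'' is aimed at the wrong spot.

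Second, and more substantively, the real work is the existence of the limit at the two clean points $01\ldots$ and $10\ldots$, and there your write-up glosses the key step. The recursions at these two points are mutually coupled ($B$ in terms of $A$ and conversely), i.e.\ a two-state Markov renewal system, whereas the Renewal Theorem quoted in Section \ref{TL} applies to a scalar equation $A(q)=\sum_j A(q-j)p_j+a(q)$. The paper's essential move is to substitute one coupled recursion into the other, which produces the convolved kernel $p_q=\sum_{i+j=q}\frac{i^{-\delta}}{\zeta(\delta)}\frac{j^{-\gamma}}{\zeta(\gamma)}$; only for this composed equation can one verify the hypotheses ($\sum_q p_q=1$, $p_q\sim q^{1-\gamma-\delta}$, $M<\infty$ for $\gamma>2$, summability of the drive). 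Announcing that the decomposition ``converts the recursion into a Markov renewal equation'' and then citing ``the Renewal Theorem'' skips precisely this reduction. Likewise, the equality of the two limits does not follow from a ``shared stationary solution'': it requires either the explicit check $\sum_q a(q)=\sum_q b(q)$, as in the paper, or the observation that $B(q)$ is itself a proper convex combination of the $A(q-k)$ plus a vanishing term.
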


To facilitate the understanding why the
Proposition \ref{dif} follows from Proposition \ref{easy}
we compute the limit in a simple case.
Let $y$ be a periodic of the form
$y=(011)^\infty=011\, 011\,011...$.
Assuming the Proposition \ref{easy} we have that the following
limits are equal
$$
\lim_{n\to\infty}\,\mathcal{L}_{ \log J}^{3\,n} (I_{[a]})(011)^\infty
=
\lim_{n\to\infty}\,\mathcal{L}_{ \log J}^{3\,n+1} (I_{[a]})(110)^\infty
=
\lim_{n\to\infty}\,\mathcal{L}_{ \log J}^{3\,n+2} (I_{[a]})(101)^\infty.
$$
For this case the statement of the
Proposition \ref{dif} holds true because of
$$
\sigma^{3\,n} (011)^\infty= (011)^\infty
,\,\,\,
\sigma^{3\,n+1} (011)^\infty= (110)^\infty
\,\,
\text{and}\,\,
\sigma^{3\,n+2} (011)^\infty= (101)^\infty.
$$

In order to prove the Proposition \ref{easy}
we will take advantage of some properties of Renewal
Theory. The proof is based on the geometric structure of the
tree graph generated by the pre-images of the point $y$
that one has to consider to compute the value
of the Ruelle operator in the point $y$.
This idea was used in \cite{FL}
but we should remark that the situation
here is more complex. The Figure \ref{grafo-arvore} helps
to understand how the procedure works.
\begin{figure}[h!]
    \centering
   \includegraphics[scale=0.67,angle=0]{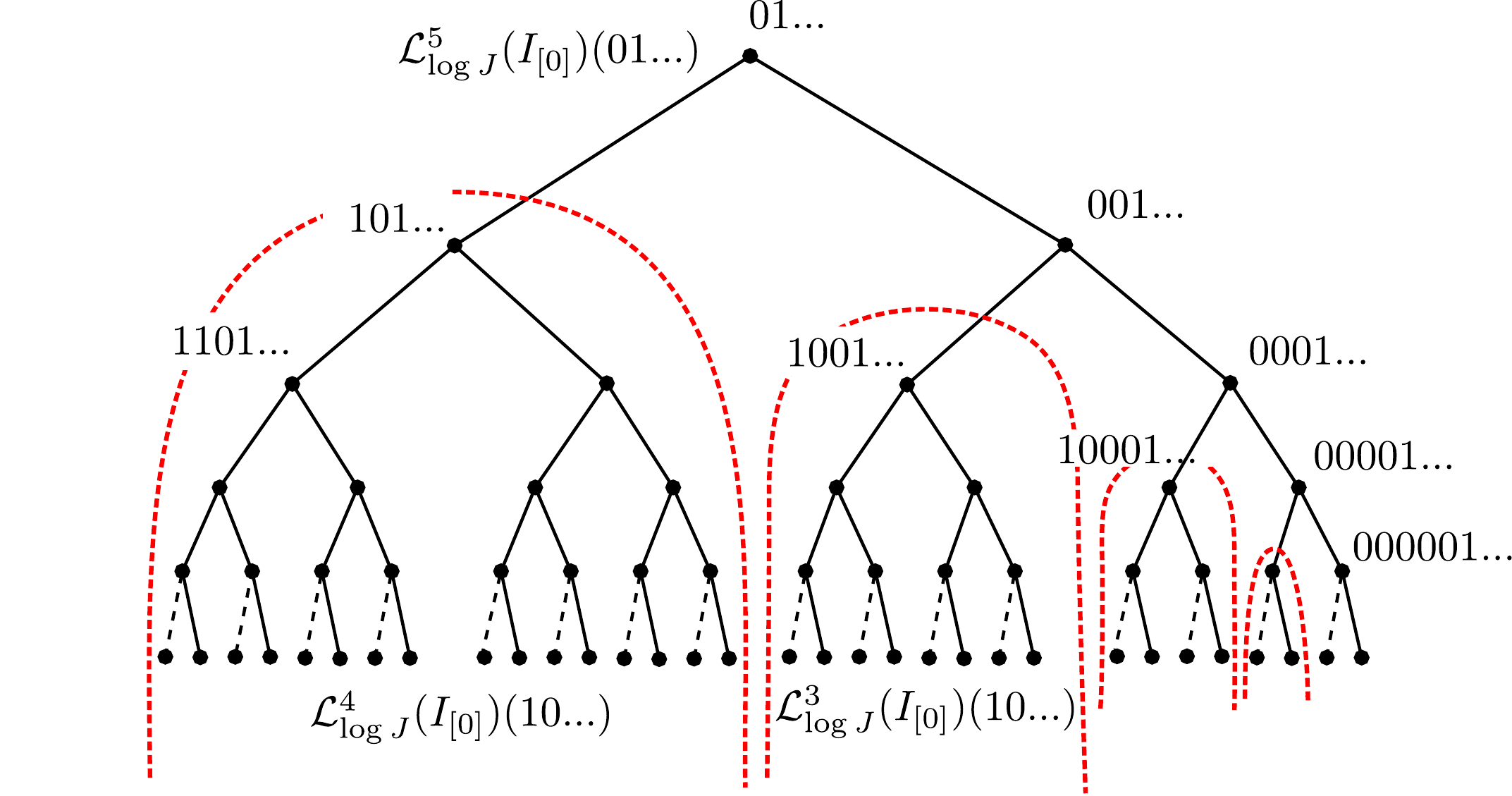}
    \caption{The tree graph representation.}
    \label{grafo-arvore}
\end{figure}

\medskip
\noindent
{\bf Proof of the Proposition \ref{q01}.}
The preimages of $0^\infty$ are $0^\infty$ and $1\,0^\infty$.
By using that $J(10^\infty)=0$ and $J(0^\infty)=1$ we get that
$$
\mu_{1}^{0^\infty}([0])
=\,
\frac{\mathcal{L}_{ \log J} \,(1_{[0]} )\, (\sigma (0^\infty))}
{ \mathcal{L}_{\,\log J} \, (1 )\, (\sigma (0^\infty)) }
=
\mathcal{L}_{ \log J} \, (1_{[0]} )\, (\sigma (0^\infty))
=
\mathcal{L}_{ \log J} \, (1_{[0]} )\, (0^\infty)\,
=
1.
$$
We proceed by induction.
By assuming that $\mathcal{L}_{ \log J}^n  \, (1_{[0]} )\,
(\sigma (0^\infty))=1$ it is easy to see that
\[
\mathcal{L}_{ \log J}^{n+1}  \, (1_{[0]} )\, (\sigma (0^\infty))
=
J(0^\infty)  \mathcal{L}_{ \log J}^n  \, (1_{[0]} )\, (0^\infty)
\, +
J(1 0^\infty)  \mathcal{L}_{ \log J}^n  \, (1_{[0]} )\, (1 0^\infty)
=
1.
\]
Therefore, $\mu_{n}^{0^\infty}([0])\to 1$, when $n \to \infty$.
In the same manner we can see that $\mu_{n}^{1^\infty}([0])\to 0$,
when $n \to \infty.$
\qed
\\
\\
{\bf Proof of the Proposition \ref{easy}}.
The Jacobian for the Double Hofbauer potential
at the inverse temperature $\beta=1$ is such that
$\log J= g + \log \varphi_1 - \log (\varphi_1 \circ \sigma)$.
For simplicity we introduce some notations and
split the computation $\log J(x)$ in six cases:

\begin{itemize}

\item[a)]
for $q\geq 2$ and $x \in L_q$ we have
\begin{align*}
\log J (x)
&=
-\gamma \log \frac{q}{q-1} +
\log \left(
	1+ q^{\gamma}\,\sum_{n=2}^\infty (n+q-1)^{-\gamma}
\right)
\\
&
\qquad\qquad\qquad\qquad\qquad\quad\quad
-\log \left(
	1+(q-1)^{\gamma}\,\sum_{n=2}^\infty (n+q-2)^{-\gamma}
\right)
\\
&:=
-\gamma \log \frac{q}{q-1} + \log r(q)- \log r(q-1).
\end{align*}
\item[b)]
for $q\geq 2$ and $x \in R_q$, we have
\begin{align*}
\log J (x)
&=
-\gamma \log \frac{q}{q-1} +
\log \left(
	1+ q^{\delta}\,\sum_{n=2}^\infty (n+q-1)^{-\delta}
\right)
\\
&
\qquad\qquad\qquad\qquad\qquad\quad\quad
- \log \left(
	1+ (q-1)^{\delta}\,\sum_{n=2}^\infty (n+q-2)^{-\delta}
\right)
\\
&:=
-\delta \log \frac{q}{q-1} + \log s(q)- \log s(q-1).
\end{align*}
\item[c)]
for $x = 0 \overbrace{111...1}^q\,0... \in L_1$ we have
\[
\log J (x)
=
-\log \left(
	1+ q^{\delta}\,\sum_{n=2}^\infty (n+q-1)^{-\delta}
\right)
=
-\log(s(q)).
\]
\item[d)]
for $x = 1 \overbrace{000...0}^q\,1... \in R_1$ we have
\[
\log J (x)
=
-\log \left(
	1+ q^{\gamma}\,\sum_{n=2}^\infty (n+q-1)^{-\gamma}
\right)
=
-\log(r(q)).
\]
\item[e)] for $x=0^\infty$ or $x=1^\infty$ we have
$J(x)=1$.

\item[f)] for $x=10^\infty$ or $x=01^\infty$ we have
$ J(x)=0$.
\end{itemize}
We point out that $\log J(010..)=- \log \zeta(\gamma)=-\log(r(1))$ and $\log
J(101..)=- \log \zeta(\delta)=-\log(s(1)).$
Simple computation shows that
$ J (10^n1..)\sim n^{-1}$
and
$ J (01^n 0..)\sim n^{-1}$.

In this way $J$ is continuous but $\log J (0\, 1^\infty)$ is not defined.

We proceed with the computation
of $\mathcal{L}_{ \log J}^n (I_{[a]}) (y)$
for $y=01\ldots$ and $[a]=[0]$.
Let us stress again that the idea is
to follow \cite{FL} but here we use
the scheme of smaller trees on the right side
(see Figure \ref{grafo-arvore}).
Note that for all $n\in\mathbb{N}$
the value $\mathcal{L}_{ \log J}^n (I_{[0]})(y)$ is
constant and independent of $y\in[01]$
(analogously for $y\in[10]$).
One element in the sum determined by $\mathcal{L}_{ \log J}^q (I_{[0]})
(01...)$ is
$$
\exp\left(
\log J( \underbrace{000...0}_{q+1} \,1) + \log J(
\underbrace{000...0}_{q} \,1)+ \log J( \underbrace{000...0}_{q-1} \,1)
+...+\log J( 0 \,0\,1...)
\right)
$$
which simplifies to
$$
\,\frac{r(q+1)}{r(q)} \left(\frac{q+1}{q}\right)^{-\gamma}
\frac{r(q)}{r(q-1)}\left(\frac{q}{q-1}\right)^{-\gamma} \,...
\left(\frac{2}{1}\right)^{-\gamma}
\frac{r(2)}{\zeta(\gamma)}
=
\frac{(q+1)^{-\gamma}\,\, r(q+1)}{\zeta(\gamma)} .
$$
For $q=1$ this is simply given by
$
\mathcal{L}_{ \log J}^1 (I_{[0]}) (01..)
=2^{-\gamma}r(2)/\zeta(\gamma)
$.
The general term of $\mathcal{L}_{ \log J}^q (I_{[0]})
(01...)$ for $1\leq j \leq q-1$ is given by
the following expression
\begin{multline*}
\biggl[
\exp\biggl(
\log J( 1\underbrace{000...0}_{q-j} \,1)
+\log J( \underbrace{000...0}_{q-j} \,1)
+ \log J( \underbrace{000...0}_{q-j-1}\,1)
\\
+\ldots
+\log J( 0 \,0\,1...)
\biggr)
\biggr]
\,\mathcal{L}_{ \log J}^{j}(I_{[0]})(10...)
\end{multline*}
which is equals to
\[
\frac{(q-j)^{-\gamma}}{\zeta(\gamma)}
\,\mathcal{L}_{ \log J}^{j} (I_{[0]})(10...).
\]
Therefore for any $q\geq 2$ we have that
\begin{multline}
\mathcal{L}_{ \log J}^q (I_{[0]}) (01\ldots)=
\frac{(q-1)^{-\gamma}}{\zeta(\gamma)}
\,\mathcal{L}_{ \log J}^{1} (I_{[0]})(10\ldots)+\ldots
\\
\label{pp1}
+\frac{2^{-\gamma}}{\zeta(\gamma)}\,\mathcal{L}_{
\log J}^{q-2} (I_{[0]}) (10\ldots)+\frac{1}{\zeta(\gamma)}
\,\mathcal{L}_{ \log J}^{q-1} (I_{[0]}) (10\ldots)
\,+ \,\,\,
\frac{(q+1)^{-\gamma} r(q+1)}{\,\,\zeta(\gamma)}  .
\end{multline}

\medskip

The above expression is not exactly a
renewal type of equation because of the powers
of operators involving on it for points of the form
$(01..)$ are described by the powers of the operator
evaluated on points of the form $(10...)$.
We need some more work in order to get an
true renewal equation.
This is the main motivation
for our next step
which is
the computation of
$\mathcal{L}_{ \log J}^n (I_{[a]}) (y)$
for $y=10..$ and $[a]=[0]$ using the scheme of smaller  trees
on the left side.

\medskip
Proceeding as above
and splitting $\mathcal{L}_{ \log J}^q (I_{[0]}) (10..)$
we find the following term
$$
\exp \biggl(
\log J( 0\underbrace{111...1}_{q} \,0) + \log J( \underbrace{111...1}_{q}
\,0)+ \log J( \underbrace{111...1}_{q-1} \,0)+...+\log J( 1 \,1\,0...)
\biggr)
$$
which can be reduced to the expression below by straightforward
computations
\[
\,\frac{1}{s(q)}  \left(\frac{q}{q-1}\right)^{-\delta}
\,\frac{s(q)}{s(q-1)}\left(\frac{q-1}{q-2}\right)^{-\delta}
\,\frac{s(q-1)}{s(q-2)}\ldots
\left(\frac{2}{1}\right)^{-\delta}
\,\frac{s(2)}{s(1)}
=
\frac{1}{\zeta(\delta)}\,q^{-\delta}.
\]
Similarly to the previous step
we can see that for all $n\in\mathbb{N}$
we have that the value of $\mathcal{L}_{ \log J}^n (I_{[0]}) (x)$ is
constant and independent of $x\in[10]$.
Following the geometric picture of the Renewal Equation
it is easy to see that
\begin{multline}\label{del}
\mathcal{L}_{ \log J}^q (I_{[0]}) (10...)
=
\frac{1}{\zeta(\delta)}\,q^{-\delta}+   \frac{1}{\zeta(\delta)}
(q-1)^{-\delta}\,\mathcal{L}_{ \log J}^{1} (I_{[0]}) (01...)+\ldots
\\
+\frac{1}{\zeta(\delta)}
3^{-\delta}\,\mathcal{L}_{ \log J}^{q-3} (I_{[0]})(01..)
+ \frac{2^{-\delta}}{\zeta(\delta)}\,
\mathcal{L}_{ \log J}^{q-2}(I_{[0]}) (01..)
+
\,\frac{1}{\zeta(\delta) }\,\,\mathcal{L}_{ \log
J}^{q-1} (I_{[0]}) (010..).
\end{multline}
Similarly to \eqref{pp1} the above expression
is not exactly a renewal type equation and
to obtain a genuine renewal equation
the idea is to replace (\ref{pp1})  in (\ref{del}).
By doing this we obtain the identity below which
we write down with several terms aiming to help the reader
to identify the pattern emerging from this replacement
\[
\begin{array}{ll}
\mathcal{L}_{ \log J}^q (I_{[0]}) (10..)
=&
\frac{1}{\zeta(\delta)}\,q^{-\delta}+   \frac{1}{\zeta(\delta)}
(q-1)^{-\delta}\, \frac{2^{-\gamma}\,\, r(2)}{\,\,\zeta(\gamma)}
+
\\[0.3cm]
&
\frac{1}{\zeta(\delta)} (q-2)^{-\delta}
\Big[
\,\frac{1}{\zeta(\gamma)}
\,\,\mathcal{L}_{ \log J}^{1} (I_{[0]}) (10..)\,+ \,\,\,\frac{3^{-\gamma}\,\,
r(3)}{\,\,\zeta(\gamma)}
\Big]
+
\\[0.3cm]
&
\frac{1}{\zeta(\delta)}
(q-3)^{-\delta}
\Big[
\,\frac{2^{-\gamma}}{\zeta(\gamma)} \,\,\mathcal{L}_{ \log
J}^{1} (I_{[0]}) (10..)\,+ \frac{1}{\zeta(\gamma)} \,\,\mathcal{L}_{ \log
J}^{2} (I_{[0]}) (10..)\,+
\\[0.3cm]
&
\phantom{
\frac{1}{\zeta(\delta)}
(q-3)^{-\delta}
\Big[
\,\frac{2^{-\gamma}}{\zeta(\gamma)}\
}
\frac{4^{-\gamma}\,\, r(4)}{\,\,\zeta(\gamma)}
\Big]
+\ldots
\\[0.3cm]
&
\frac{1}{\zeta(\delta)} 3^{-\delta}\,
\Big[
\frac{(q-4)^{-\gamma}}{\zeta(\gamma)}\,\mathcal{L}_{ \log J}^{1} (I_{[0]})
(10..)+..+ \frac{1}{\zeta(\gamma)} \,\mathcal{L}_{ \log J}^{q-4} (I_{[0]})
(10..)\,+
\\[0.3cm]
&
\phantom{
\frac{1}{\zeta(\delta)} 3^{-\delta}\,
\Big[
\frac{(q-4)^{-\gamma}}{\zeta(\gamma)}
}
\frac{(q-2)^{-\gamma}\,\, r(q-2)}{\,\,\zeta(\gamma)}
\Big]
+
\\[0.3cm]
&
\frac{1}{\zeta(\delta)}2^{-\delta}\,
\Big[
\frac{(q-3)^{-\gamma}}{\zeta(\gamma)}\,\mathcal{L}_{ \log J}^{1} (I_{[0]})
(10..)+..+ \frac{1}{\zeta(\gamma)} \,\mathcal{L}_{ \log J}^{q-3} (I_{[0]})
(10..)\,+
\\[0.3cm]
&
\phantom{
\frac{1}{\zeta(\delta)}2^{-\delta}\,
\Big[
\frac{(q-3)^{-\gamma}}{\zeta(\gamma)}\,
}
\frac{(q-1)^{-\gamma}\,\, r(q-1)}{\,\,\zeta(\gamma)}
\Big]
\,+
\\[0.3cm]
&
\phantom{2^{-\delta}}
\frac{1}{\zeta(\delta) }
\Big[
\frac{(q-2)^{-\gamma}}{\zeta(\gamma)}\,\mathcal{L}_{ \log J}^{1} (I_{[0]})
(10..)+..+ \,\frac{1}{\zeta(\gamma)}\mathcal{L}_{ \log J}^{q-2} (I_{[0]})
(10..)\,+
\\[0.3cm]
&
\phantom{
2^{-\delta}\frac{1}{\zeta(\delta) }
\Big[
\frac{(q-2)^{-\gamma}}{\zeta(\gamma)}\,
}
\,\frac{q^{-\gamma}\,\, r(q)}{\,\,\zeta(\gamma)}
\Big].
\end{array}
\]
By rearranging the terms of the sum
and make the trivial simplifications
we can see that the above expression
is equal to
\[
\begin{array}{r}
\frac{1}{\zeta(\delta)}\,q^{-\delta}+   \frac{1}{\zeta(\delta)}
(q-1)^{-\delta}\, \frac{2^{-\gamma}\,\, r(2)}{\,\,\zeta(\gamma)}+
...+\frac{1}{\zeta(\delta)} 2^{-\delta}\, \frac{(q-1)^{-\gamma}\,\,
r(q-1)}{\,\,\zeta(\gamma)}  + \frac{1}{\zeta(\delta)}\,\frac{q^{-\gamma}\,\,
r(q)}{\,\,\zeta(\gamma)}\,
+\ \ \ \
\\[0.4cm]
\mathcal{L}_{ \log J}^{1} (I_{[0]})(10..)
\Big[
\frac{(q-2)^{-\delta}}{\zeta(\delta)} \frac{1}{\zeta(\gamma)}  \,+
\frac{(q-3)^{-\delta}}{\zeta(\delta)}\,\frac{2^{-\gamma}}{\zeta(\gamma)}+...+
\,\frac{2^{-\delta}}{\zeta(\delta)} \,\frac{(q-3)^{-\gamma}}{\zeta(\gamma)}+
\,\frac{1}{\zeta(\delta)} \,\frac{(q-2)^{-\gamma}}{\zeta(\gamma)}
\Big]
+
\\[0.4cm]

\mathcal{L}_{ \log J}^{2} (I_{[0]}) (10..)
\Big[
\frac{(q-3)^{-\delta}}{\zeta(\delta)} \,\frac{1}{\zeta(\gamma)}
\,+\frac{(q-4)^{-\delta}}{\zeta(\delta)}\,\frac{2^{-\gamma}}{\zeta(\gamma)}
+...+ \,\frac{2^{-\delta}}{\zeta(\delta)}
\,\frac{(q-4)^{-\gamma}}{\zeta(\gamma)} + \,\frac{1}{\zeta(\delta)}
\,\frac{(q-3)^{-\gamma}}{\zeta(\gamma)}
\Big]+
\\[0.4cm]
\ldots +
\mathcal{L}_{ \log J}^{q-3} (I_{[0]})(10..)
\Big[
\frac{2^{-\delta}}{\zeta(\delta) } \,\frac{1}{\zeta(\gamma)}
\,+\,\frac{1}{\zeta(\delta) }\,\frac{2^{-\gamma}}{\zeta(\gamma) }
\Big]
+
\mathcal{L}_{ \log J}^{q-2} (I_{[0]}) (10..)
\frac{1}{\zeta(\delta) }\frac{1}{\zeta(\gamma)}.
\end{array}
\]
Now we define $A(0)=0$, $p_1=0$ and for $q\geq 2$,
$A(q)=  \mathcal{L}_{ \log J}^{q} (I_{[0]}) (10..)$
and
\begin{equation}
\label{xy1}
\begin{array}{c}
p_q= \, \frac{(q-1)^{-\delta}}{\zeta(\delta)}
\,\frac{1}{\zeta(\gamma)}
\,+\frac{(q-2)^{-\delta}}{\zeta(\delta)}\,\frac{2^{-\gamma}}{\zeta(\gamma)}
+...+ \,\frac{2^{-\delta}}{\zeta(\delta)}
\,\frac{(q-2)^{-\gamma}}{\zeta(\gamma)} + \,\frac{1}{\zeta(\delta)}
\,\frac{(q-1)^{-\gamma}}{\zeta(\gamma)}
\end{array}.
\end{equation}
Let $a(q)$ be the sequence defined for $q\geq 1$ by
\begin{equation} \label{xy2}
\begin{array}{c}
a(q)=
\frac{q^{-\delta}}{\zeta(\delta)}+
\frac{(q-1)^{-\delta}}{\zeta(\delta)}
\frac{2^{-\gamma}\,\,r(2)}{\,\,\zeta(\gamma)}
+...+
\frac{2^{-\delta}}{\zeta(\delta)} \,
\frac{(q-1)^{-\gamma}\,\, r(q-1)}{\,\,\zeta(\gamma)}  +
\frac{1}{\zeta(\delta)}\,\frac{q^{-\gamma}\,\, r(q)}{\,\,\zeta(\gamma)}
\end{array},
\end{equation}
From the definitions we have $a(1)=1/\zeta(\delta)=A(1)$
and
\[
\sum_{j=2}^\infty p_j
=
\sum_{n=1}^\infty
\frac{n^{-\gamma}}{\zeta(\gamma) }
\sum_{n=1}^\infty
\frac{n^{-\delta} }{\zeta(\delta) }
=
1.
\]
By bringing together all the above results
we obtain the following genuine renewal equation
$$
A(q) = [A(0) p_q + A(1) p_{q-1}  + A(2)\,  p_{q-2} +...+A(q-2) p_2 +A(q-1)
p_1 ]+ a(q)
$$
Recalling that $p_1=0$ we have
\begin{equation}\label{rene} A(q) = [A(1) p_{q-1}  + A(2)\,  p_{q-2}
+...+A(q-2) p_2  ]+ a(q).
\end{equation}

\begin{lemma} Let $p_q$ be the sequence above defined.
Then when $q\to \infty$ we have
\begin{equation} \label{maindecayp}
p_q\sim q^{1- \delta- \gamma},
\end{equation}
\end{lemma}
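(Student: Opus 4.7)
The plan is to recognize $p_q$ as (a multiple of) the convolution of two summable power-law sequences and to extract its tail asymptotics. Collecting the terms of the defining expression,
$$
p_q
= \frac{1}{\zeta(\gamma)\zeta(\delta)}
\sum_{k=1}^{q-1} k^{-\gamma}(q-k)^{-\delta},
$$
so the task reduces to determining the asymptotic rate of the convolution $(k^{-\gamma})*(k^{-\delta})$ evaluated at $q$, for $\gamma,\delta>1$.

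The standard approach is to split the sum according to where the dominant contribution lives. First I would isolate a middle region $k\in[\epsilon q,(1-\epsilon)q]$ for a small fixed $\epsilon>0$ and apply the substitution $k=qx$, which turns the sum into a Riemann-sum approximation of
$$
q^{1-\gamma-\delta}\int_{\epsilon}^{1-\epsilon} x^{-\gamma}(1-x)^{-\delta}\,dx,
$$
producing the scaling factor $q^{1-\gamma-\delta}$ together with a $q$-independent constant given by the Beta-type integral. This is the candidate for the leading-order term.

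Next I would treat the two boundary layers $1\le k\le \epsilon q$ and $(1-\epsilon)q\le k\le q-1$. In each layer, one of the two factors is essentially frozen in $q$, since $\sum k^{-\gamma}$ and $\sum k^{-\delta}$ converge for $\gamma,\delta>1$; for example, on the left layer $(q-k)^{-\delta}=q^{-\delta}(1+O(k/q))$, so the contribution reduces to a tail of $\sum k^{-\gamma}$ times a factor of order $q^{-\delta}$. A tail estimate of the form $\sum_{k>\epsilon q}k^{-\gamma}=O(q^{1-\gamma})$ should then allow me to bound each boundary contribution by something of order $q^{1-\gamma-\delta}$ up to the choice of $\epsilon$, so that the middle region genuinely dictates the asymptotics.

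The main obstacle is the careful control of the interfaces between the middle and boundary regions. Because the integrand $x^{-\gamma}(1-x)^{-\delta}$ is non-integrable at each endpoint when $\gamma,\delta>1$, the truncation parameter $\epsilon$ and the layer thicknesses must be chosen so that the singularity of the Beta-type integral and the discrete tail sums can be matched consistently; only after this matching is done can one read off the asymptotic rate $p_q\sim C\,q^{1-\delta-\gamma}$ with an explicit constant $C$ expressed in terms of $\zeta(\gamma)$, $\zeta(\delta)$, and the Beta function.
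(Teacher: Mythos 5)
Your rewriting of $p_q$ as the normalized convolution $\frac{1}{\zeta(\gamma)\zeta(\delta)}\sum_{k=1}^{q-1}k^{-\gamma}(q-k)^{-\delta}$ is correct, and your middle-region computation is essentially what the paper does (it divides by $q\,q^{-\gamma}q^{-\delta}$ and reads the result as a Riemann sum). The gap is in the boundary layers, and it is fatal to the claimed rate. On the left layer $1\le k\le\epsilon q$ the relevant sum is the \emph{head} $\sum_{k\le\epsilon q}k^{-\gamma}$, which tends to $\zeta(\gamma)$; it is not the tail $\sum_{k>\epsilon q}k^{-\gamma}=O(q^{1-\gamma})$ that you invoke. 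Since $(q-k)^{-\delta}=q^{-\delta}\bigl(1+O(\epsilon)\bigr)$ throughout that layer, the left layer contributes $\zeta(\gamma)q^{-\delta}(1+o(1))$ (up to the prefactor $1/(\zeta(\gamma)\zeta(\delta))$), and symmetrically the right layer contributes $\zeta(\delta)q^{-\gamma}(1+o(1))$. Because $\gamma,\delta>1$ we have $q^{1-\gamma-\delta}=q^{1-\gamma}\cdot q^{-\delta}=o(q^{-\delta})$, so both boundary layers strictly dominate the middle region's $O_{\epsilon}(q^{1-\gamma-\delta})$, for every choice of $\epsilon$. The endpoint divergence of $\int x^{-\gamma}(1-x)^{-\delta}\,dx$, which you correctly flag as the obstacle, is precisely the signal that essentially all of the mass sits in the boundary layers; it cannot be matched away.

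The correct conclusion is the standard tail equivalence for convolutions of summable regularly varying sequences: $p_q=\frac{1}{\zeta(\gamma)\zeta(\delta)}\bigl(\zeta(\gamma)q^{-\delta}+\zeta(\delta)q^{-\gamma}\bigr)(1+o(1))\sim q^{-\delta}/\zeta(\delta)$ when $1<\delta<\gamma$, not $q^{1-\gamma-\delta}$. A sanity check with $\gamma=\delta=3$: the single term $k=1$ already contributes $(q-1)^{-3}\sim q^{-3}$, which is far larger than $q^{-5}$. You should be aware that the paper's own proof reaches the stated rate by writing the normalized summand as $(j/q)^{\delta}\bigl((q-j)/q\bigr)^{\gamma}$, with positive exponents, so that the limiting Beta-type integral converges; the normalization you wrote, which is the one that actually follows from the definition of $p_q$, gives $(j/q)^{-\gamma}\bigl((q-j)/q\bigr)^{-\delta}$ instead. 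So the difficulty you identified is not a technical interface to be smoothed over: it is the reason this route (and the statement itself, as written) does not go through, and any later use of the exponent $1-\gamma-\delta$ (for instance the finiteness of $M=\sum_q q\,p_q$ and the analogous claims for $a(q)$ and $b(q)$) should be re-derived with the corrected rate $q^{-\delta}$.
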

\begin{proof}
To prove this asymptotic behavior it is enough to
prove that the quotient
$p_q/(q\,\,q^{-\gamma}\,\,q^{-\delta})$ has a limit,
when $q\to\infty$. This quotient is explicitly given by
\[
\frac{
\frac{(q-1)^{-\delta}}{\zeta(\delta)} \,\frac{1}{\zeta(\gamma)}
\,+\frac{(q-2)^{-\delta}}{\zeta(\delta)}\,\frac{2^{-\gamma}}{\zeta(\gamma)}
+...+ \,\frac{2^{-\delta}}{\zeta(\delta)}
\,\frac{(q-2)^{-\gamma}}{\zeta(\gamma)} + \,\frac{1}{\zeta(\delta)}
\,\frac{(q-1)^{-\gamma}}{\zeta(\gamma)}\,}
{q\,\,q^{-\gamma}\,\,q^{-\delta}}
\]
which can be rewritten as
\[
\begin{array}{c}
\frac{1}{\zeta(\gamma)\,
\zeta(\delta)}
\sum_{j=1}^{q-1} \,\,
\frac{1}{q}\,
\left(\frac{ j}{q}\right)^{\delta}
\,\,
\left(\frac{q- j}{q}\right)^{\gamma}.
\end{array}
\]
Looking at this expression as Riemann sums we
can guarantee that it has a limit, when $q\to\infty$,
and
$$
\begin{array}{c}
\frac{1}{\zeta(\gamma)\,
\zeta(\delta)}
\sum_{j=1}^{q-1} \,\,
\frac{1}{q}\,
\left(\frac{ j}{q}\right)^{\delta}\,\,
\left(1-\frac{j}{q}\right)^{\gamma}
\ \longrightarrow \
\frac{1}{\zeta(\gamma)\,
\zeta(\delta)}
\int_0^1\, x^{\delta} (1-x)^{\gamma}\, dx
\end{array}
$$
which finish the proof of the lemma.
\end{proof}

By assuming that $\delta< \gamma$ and
using a similar argument as above one
can show that $a(q)\sim q^{2- \delta- \gamma}$.
At this point we have proved that
the hypothesis of the Theorem 6.1 of the
reference \cite{KT2} holds and this theorem allow us to estimate the
limit we are interested in.
It is very important to note that we are also able
to use the result appearing in a remark below the Theorem 6.1
of \cite{KT2} (even though $p_1=0$).
Let us denote $M= \sum_{q=1}^\infty q\, p_q$.
If $\gamma>2$ then $M$ is finite.  The
Renewal Theorem assures that
\begin{equation}
  \lim_{q\to \infty}
  \mathcal{L}_{ \log J}^{q} (I_{[0]}) (10..)
  \,=\,
  \lim_{q\to \infty} A(q)
  =
  \frac{\sum_{q=1}^\infty a(q)}{M}
\end{equation}
One can show that
$
\sum_{q=1}^\infty a(q)
=
1\,+\,
\sum_{j=2}^\infty
\frac{ r(j)\, j^{-\gamma}}{\,\zeta(\gamma)}
$.

Now we proceed to another big step in this proof.
In this step we need to obtain a similar renewal equation for
$\mathcal{L}_{ \log J}^q (I_{[0]}) (01..)$.
In the previous step we have replaced (\ref{pp1}) in (\ref{del}).
Now, we need instead to replace (\ref{del}) in (\ref{pp1}).
Starting as before we write
\[
\begin{array}{ll}
\mathcal{L}_{ \log J}^q (I_{[0]}) (01..)=
&
\frac{(q-1)^{-\gamma}}{\zeta(\gamma)}\,\frac{1}{\zeta(\delta)}
+
\frac{(q-2)^{-\gamma}}{\zeta(\gamma)}
\Big[
\frac{2^{-\delta}}{\zeta(\delta)} +
\frac{1}{\zeta(\delta)}\,\mathcal{L}_{ \log J}^1 (I_{[0]}) (01..)
\Big]
+
\\[0.3cm]
&
\frac{(q-3)^{-\gamma}}{\zeta(\gamma)}
\Big[
\frac{3^{-\delta}}{\zeta(\delta)} +
\frac{2^{-\delta} }{\zeta(\delta)}\,\mathcal{L}_{ \log J}^1 (I_{[0]}) (01..)
\,+ \, \frac{1}{\zeta(\delta)}\,\mathcal{L}_{ \log J}^2 (I_{[0]}) (01..)
\Big]
+
\\[0.3cm]
&
\ldots +
\frac{2^{-\gamma}}{\zeta(\gamma)}
\Big[
\frac{ (q-2)^{-\delta}}{\zeta(\delta)}\,+
\frac{(q-3)^{-\delta}}{\zeta(\delta)}
\,\mathcal{L}_{ \log J}^{1} (I_{[0]})(01..)
+\ldots+
\\[0.3cm]
&
\phantom{
\ldots +
\frac{2^{-\gamma}}{\zeta(\gamma)}
\Big[ \quad
}
\frac{2^{-\delta}}{\zeta(\delta)}\,
\mathcal{L}_{ \log J}^{q-4}(I_{[0]}) (01..)
\,+\,
\frac{1}{\zeta(\delta) }
\,\,\mathcal{L}_{ \log J}^{q-3}(I_{[0]}) (010..)
\Big]
+
\\[0.3cm]
&
+
\frac{1}{\zeta(\gamma)}
\Big[
\frac{(q-1)^{-\delta}}{\zeta(\delta)}
\,+
\frac{(q-2)^{-\delta}}{\zeta(\delta)}
\,\mathcal{L}_{ \log J}^{1} (I_{[0]})(01..)
+\ldots+
\\[0.3cm]
&
\phantom{
\ldots +
\frac{2^{-\gamma}}{\zeta(\gamma)}
\Big[
}
\frac{2^{-\delta}}{\zeta(\delta)}\,\mathcal{L}_{ \log J}^{q-3}
(I_{[0]}) (01..)\,+\,\frac{1}{\zeta(\delta) }\,\,\mathcal{L}_{ \log J}^{q-2}
(I_{[0]}) (010..)
\Big]
\,+
\\[0.3cm]
&
+\frac{(q+1)^{-\gamma}\,\, r(q+1)}{\,\,\zeta(\gamma)}.
\end{array}
\]
By performing obvious simplifications, the above expression
becomes

\[
\begin{array}{r}
\frac{(q-1)^{-\gamma}\,\,}{\,\,\zeta(\gamma)}
\frac{1}{\,\,\zeta(\delta)}  +
\frac{(q-2)^{-\gamma}\,\,}{\,\,\zeta(\gamma)}
\frac{2^{-\delta}}{\,\,\zeta(\delta)}+...+  \frac{1\,\,}{\,\,\zeta(\gamma)}
\frac{(q-1)^{-\delta}}{\,\,\zeta(\delta)}   + \frac{(q+1)^{-\gamma}\,\,
r(q+1)}{\,\,\zeta(\gamma)}
+
\qquad\qquad\qquad\qquad
\\[0.3cm]
\mathcal{L}_{ \log J}^1 (I_{[0]}) (01..)\,
\Big[
\frac{(q-2)^{-\gamma}\,\,}{\,\,\zeta(\gamma)}
\frac{1}{\,\,\zeta(\delta)}
+     \frac{(q-1)^{-\gamma}\,\,}{\,\,\zeta(\gamma)}
\frac{2^{-\delta}}{\,\,\zeta(\delta)}+...+  \frac{1\,\,}{\,\,\zeta(\gamma)}
\frac{(q-2)^{-\delta}}{\,\,\zeta(\delta)}
\Big]
+
\qquad\qquad
\\[0.3cm]
\mathcal{L}_{ \log J}^2 (I_{[0]}) (01..)\,
\Big[
\frac{(q-3)^{-\gamma}\,\,}{\,\,\zeta(\gamma)}
\frac{1}{\,\,\zeta(\delta)}
+     \frac{(q-2)^{-\gamma}\,\,}{\,\,\zeta(\gamma)}
\frac{2^{-\delta}}{\,\,\zeta(\delta)}+...+  \frac{1\,\,}{\,\,\zeta(\gamma)}
\frac{(q-3)^{-\delta}}{\,\,\zeta(\delta)}
\Big]
+
\qquad\quad\
\\[0.3cm]
...+
\mathcal{L}_{ \log J}^{q-3} (I_{[0]}) (01..)\, \{
\frac{2^{-\gamma}\,\,}{\,\,\zeta(\gamma)} \frac{1\,\,}{\,\,\zeta(\delta)}\,+
\frac{1\,\,}{\,\,\zeta(\gamma)} \, \frac{2^{-\delta}\,\,}{\,\,\zeta(\delta)} \}
+
\mathcal{L}_{ \log J}^{q-2} (I_{[0]}) (01..)\,
\frac{1\,\,}{\,\,\zeta(\gamma)} \,\frac{1\,\,}{\,\,\zeta(\delta)} .
\end{array}
\]
Analogously we define $B(0)=0$, $p_1=0$ and
for $q\geq 2$,
$B(q)= \mathcal{L}_{ \log J}^{q} (I_{[0]}) (01..)$.
We also define the following sequence
\begin{equation}
\begin{array}{c}
b(q)=  \frac{(q-1)^{-\gamma}\,\,}{\,\,\zeta(\gamma)}
\frac{1}{\,\,\zeta(\delta)}  +
\frac{(q-2)^{-\gamma}\,\,}{\,\,\zeta(\gamma)}
\frac{2^{-\delta}}{\,\,\zeta(\delta)}+...+  \frac{1\,\,}{\,\,\zeta(\gamma)}
\frac{(q-1)^{-\delta}}{\,\,\zeta(\delta)}   + \frac{(q+1)^{-\gamma}\,\,
r(q+1)}{\,\,\zeta(\gamma)}.
\end{array}
\end{equation}
Note that the first
term of this sequence satisfies
$b(1) = 2^{-\gamma}\,\, r(2)/\zeta(\gamma)=B(1).$
By using the identities established above
we have proved the following renewal equation:
$$
B(q)
=
[B(0) p_q + B(1) p_{q-1}  + B(2)\,  p_{q-2} +...+B(q-2) p_2 +B(q-1)
p_1 ]+ b(q),
$$
where $p_j$ is same sequence we consider in the previous step.
Since $B(0)=0$ we have in fact
\begin{equation}\label{rene2} B(q) = [ B(1) p_{q-1}  + B(2)\,  p_{q-2}
+...+B(q-2) p_2 ]+ b(q).
\end{equation}
Again one can show that $b(q) \sim q^{2-\delta-\gamma}$.
By applying the Renewal Theorem we obtain
\begin{equation}
\lim_{q\to \infty}  \mathcal{L}_{ \log J}^{q} (I_{[0]}) (01..) \,=\,
\lim_{q\to \infty} B(q)= \frac{\sum_{q=1}^\infty b(q)}{M}.
\end{equation}
Since the following equality holds
$$\sum_{q=1}^\infty b(q)=1\,+\,\sum_{j=2}^\infty   \frac{ r(j)\, j^{-\gamma}
}{\,\zeta(\gamma)}$$
we can define a constant $K$ so that
$$
K:=
\frac{\sum_{q=1}^\infty b(q)}{M}
=
\frac{\sum_{q=1}^\infty a(q)}{M}.
$$

The above described procedure allows to obtain
other Thermodynamic Limits.
For instance, if $y=110...$ we have
\begin{align*}
\mathcal{L}_{ \log J}^{q} (I_{[0]}) (10..)
&=
J(010..)\, \mathcal{L}_{ \log J}^{q-1} (I_{[0]})(010..)
+
J(110..) \,\mathcal{L}_{ \log J}^{q-1} (I_{[0]})(110..)
\\
&=
\frac{1}{\zeta(\delta)} \,
\mathcal{L}_{ \log J}^{q-1} (I_{[0]}) (010..)
+
\frac{2^{-\delta}\, s(2)}{\zeta(\delta)}
\mathcal{L}_{ \log J}^{q-1}(I_{[0]}) (110..).
\end{align*}
Taking the limit in $q$ we get
$$
\lim_{q\to \infty}  \mathcal{L}_{ \log J}^{q} (I_{[0]}) (110..)
=
H_1
\qquad
\text{where}
\qquad
H_1
=
\left( K- \frac{1}{\zeta(\delta)}K \right)
\frac{\zeta(\delta)}{2^{-\delta}\, s(2)}
$$
By performing simple algebraic manipulations
we can see that $H_1=K.$
Let us apply the method again but now for $y=001..$.
From the equation
\begin{align*}
\mathcal{L}_{ \log J}^{q} (I_{[0]}) (01..)
&=
 J(110..)\, \mathcal{L}_{ \log J}^{q-1} (I_{[0]}) (101..)
+J(0010..)\,\mathcal{L}_{ \log J}^{q-1} (I_{[0]}) (001..)
\\
&=
\frac{1}{\zeta(\delta)} \,
\mathcal{L}_{ \log J}^{q-1} (I_{[0]}) (010..)+
\frac{2^{-\delta}\, s(2)}{\zeta(\delta)} \,
\mathcal{L}_{ \log J}^{q-1}(I_{[0]}) (110..),
\end{align*}
we get again, by taking the limits in $q$, that
$$
\lim_{q\to \infty}
\mathcal{L}_{ \log J}^{q} (I_{[0]}) (001..)
=
K.
$$
After prove similar result for $y=1110..$
we it is immediate to extend the method
for periodic points by a formal induction.
For this particular case, we have
\begin{align*}
\mathcal{L}_{ \log J}^{q} (I_{[0]}) (110..)
&=
J(0110..)\, \mathcal{L}_{ \log J}^{q-1} (I_{[0]}) (011..)+
J(1110..) \,\mathcal{L}_{ \log J}^{q-1} (I_{[0]})(1110..)
\\
&=
 \frac{1}{s(2)} \, \mathcal{L}_{ \log J}^{q-1} (I_{[0]}) (0110..)+
\frac{3^{-\delta}\, s(3)}{2^{-\delta}\, s(2)}   \mathcal{L}_{ \log J}^{q-1}
(I_{[0]}) (1110..),
\end{align*}
and we get in the same way that
$$
\lim_{q\to \infty}
\mathcal{L}_{ \log J}^{q} (I_{[0]}) (1110..)
=
K.
$$
This finish the proof of Proposition \ref{easy}.

\bigskip

It is worth pointing out that for any $y\in \Omega$ we have that
$$
\mathcal{L}_{ \log J}^{q} (I_{[0]}) (y)
+
\mathcal{L}_{ \log J}^{q} (I_{[1]}) (y)
=
\mathcal{L}_{ \log J}^{q} (1) (y)
=
1.
$$
Therefore for any point $y\in \Omega$
for which the Proposition \ref{easy} applies
we have
$$
\lim_{q\to \infty}  \mathcal{L}_{ \log J}^{q} (I_{[1]}) (y)
=
1-K.
$$

\section{Polynomial Decay of Correlations} \label{decay}

In this section we want to estimate the decay of correlation of the observable
$I_{[0]}$ for the equilibrium probability at
the critical inverse temperature.

That is, we will show that the integral below decays in a polynomial way, with respect to $q$, and also determine its precise asymptotic behavior
\[
\int_{\Omega}
\, (I_{[0]} \circ  \sigma^q)
\,[\,I_{[0]}\, - \mu_1[0]\,] \, d\mu_1
\,\sim
\,q^{2-\delta}.
\]

The technique is similar to the one employed in \cite{FL}
and here we will proofs in full details.
Other known results on polynomial decay of
correlations are presented in
\cite{L1}, \cite{Iso} and \cite{Pol}.

To deduce the polynomial decay
we will need several preliminary estimations.
We begin with the proof of the following asymptotic relation
\[
\mu_1[0]-\mathcal{L}_{ \log J}^{q} (I_{[0]}) (01..)
\sim
q^{2-\delta}.
\]

Define $V_q= \mu_1[0]- B(q)= \mu_1[0]-  \mathcal{L}_{ \log J}^{q} (I_{[0]})
(01..)$. We want to obtain the behavior of $V(q)$, when
$q\to\infty$. From the renewal equation (\ref{rene2})
we get another renewal equation: for
$q\geq 3$
\begin{equation}\label{newren}
V_q = \sum_{j=1}^{q-2} V_j p_{q-j}
+
\left[
\mu_1[0]\, \sum_{j=q}^\infty p_j -b(q)
\right].
\end{equation}

To simplify the notation for $q\geq 3.$
we denote by $K_q$
the last term on the above equality, that is,
$K_q= \mu_1[0]\,\sum_{j=q}^\infty p_j- b(q)$.
Now consider the following formal power series
\[
f(z) = \sum_{j=2}^\infty p_j\, z^j,
\quad
V(z) = \sum_{j=1}^\infty V_j\, z^j,
\quad
\text{and}
\quad
K(z) = \sum_{j=3}^\infty K_j\, z^j.
\]
From the renewal equation (\ref{newren}) we get that
$ V(z) \, f(z) + K(z) = V(z) - V_1 z  - V_2 z^2 .$
Therefore
$$ V(z) = \frac{K(z) + V_1 z + V_2 z^2}{1-f(z)}=  \frac{K(z) + V_1 z + V_2
z^2}{1-z}\, \frac{1-z}{1-f(z)}.$$

For $\delta$ and $\gamma$ large we have that $f(z)$ is differentiable on $z=1$
and the derivative is not zero.
From the previous estimations it is simple to see
that $K_n \sim n^{1-\gamma}.$
Up to a bounded multiplicative constant we get
\[
V(z) \sim \frac{K(z) + V_1 z + V_2 z^2}{1-z}.
\]
from where we obtain (asymptotically)
\begin{align*}
(1-z) \, V(z)
&=
\sum_{j=1}^\infty V_j\, z^j\,
- z \sum_{j=1}^\infty V_j\,z^j
\\
&=
V_1 z + V_2 z^2 + K_3 z^3 + K_4 z^4 + K_5 z^5+...
\\
&=
\sum_{j=3}^\infty K_j\, z^j+ V_1 z + V_2 z^2
\\
&=
K(z)+ V_1(z) + V_2 z^2.
\end{align*}

Note  that
$$\frac{1}{(1-z)} \, (K(z) + V_1 z + v_2 z^2)= (1 + z + z^2 + z^3 +...)  \,\,(V_1 z + V_2 z^2 + K_3 z^3 + K_4 z^4 + ...)= $$
$$ V_1\, z\,+\, (V_1 + V_2)\, z^2 + (V_1 + V_2 +K_3) z^3 +  (V_1 + V_2 +K_3 + K_4) z^4. ...$$

In this way we get the following
recurrence relation: $V_1= 0$ and $V_n= K_3
+ K_4 +..+ K_n + V_2.$
Once $K_n \sim n^{1-\gamma}$
we get that $V_n \sim n^{2-\gamma},$ where we
have assumed for last estimate to holds that $\delta<\gamma.$
This argument complete the proof of
\begin{equation}\label{can1}
\mu_1[0]-  \mathcal{L}_{ \log J}^{q} (I_{[0]}) (01..)\sim n^{2-\gamma}.
\end{equation}
By using the renewal equation (\ref{rene})
and a similar estimation procedure one can prove that
\begin{equation}\label{can2}
\mu_1[0]-  \mathcal{L}_{ \log J}^{q} (I_{[0]}) (10..)\sim n^{3-\delta-\gamma}.
\end{equation}

\bigskip
Now for each $s\geq 2$
we need to evaluate the difference
\[
\mu_1[0]-
\mathcal{L}_{ \log J}^{q} (I_{[0]})(\underbrace{00...0}_{ s}1..).
\]
Let
$
B^s_q=\mathcal{L}_{ \log J}^{q}
(I_{[0]})(\overbrace{00...0}^{s}1..),
$
$
A_t= \mathcal{L}_{ \log J}^{t} (I_{[0]}) (10...)
$
and for $j\geq 1$
$
p^s_j= (s+j-1)^{-\gamma} r(s)^{-1}s^{\gamma}.
$
Using similar arguments presented above (see figure 3 page 1092 \cite{FL})
one can show that for any $s,q\geq 2$
\begin{align}
B^s_q
&=
J (1\underbrace{00...0}_{ s}1..)\, A_{q-1} +  J
(1\underbrace{00...0}_{ s+1}1..) \, A_{q-2} +...+
J (1\underbrace{00...0}_{ s+ q -2}1..)\, A_1
\nonumber\\
&\ \ \ \
+
\frac{ (s+q)^{-\gamma} \, r(s+q)}{s^{-\gamma} \, r(s)}
\nonumber\\
&=
\frac{1}{r(s)} \, A_{q-1} +  \frac{(s+1)^{-\gamma} }{r(s)\, s^{-\gamma} }
\, A_{q-2} +...+
\frac{(s+q-2)^{-\gamma}}{r(s)\,s^{-\gamma}} \, A_1+ \frac{ (s+q)^{-\gamma} \,
r(s+q)  }{s^{-\gamma} \, r(s)}
\nonumber\\
&\label{relo}=
p^s_1\, A_{q-1} +  p^s_2 \, A_{q-2} +...+
p^s_{q-1}\, A_1+ \alpha(q,s).
\end{align}
Note that $ \sum_{j=1}^\infty p^s_j=1.$
Let us now consider for $s\geq 2$ and $n\geq 1$,
the following sequence $V_n^s = \mu_1[0] - B^s_q$.
Recalling the definitions given above if $n=1$ we have
$V_1^s=\mu_1[0] - \frac{ (s+1)^{-\gamma} \, r(s+1)  }{s^{-\gamma} \, r(s)}.$
We also introduce, for $n\geq 1$, the sequences $ K_n=\mu_1[0] - A_n$,
and $U_n^s =\mu_1[0]\,(p^s_n +p^s_{n+1}+...)\, - \alpha(n,s) $.
Note that $\mu_1[0]\,(p^s_n + p^s_{n+1}+...)\sim \frac{s^{1-\gamma}}{\gamma-1}.$
From the equation (\ref{relo})
we deduce that
$$  V_q^s = K_{q-1}\, p_1^s + K_{q-2}\,p_2^s ...+\, K_1\, p^s _{q-1} - U_q^s.$$
The last term behavior is know to be $U_q^s \sim \mu_1 [0] \,
\frac{1}{\gamma-1}\,\frac{(q+s)^{-\gamma+1 }}{s^{-\gamma}\, r(s)}- \frac{
(s+q)^{-\gamma} \, r(s+q)  }{s^{-\gamma} \, r(s)},$
and we know from (\ref{can2}) how to control $K_q$,
that is,  $K_q\sim q^{3-\delta-\gamma}$.
Since $p^s_{q-1}=\frac{(s+q-2)^{-\gamma}}{r(s)\,s^{-\gamma}}  $ we get that for
fixed $s$   the dominant term as a function of $q$ in the right hand side of
the above inequality is $U_q^s\sim \frac{(q+s)^{1-\gamma}}{s^{-\gamma} r(s)}$
because
\begin{equation} \label{can4}
K_{q-1}\, p_1^s + K_{q-2}\,p_2^s ...+\, K_1\, p^s _{q-1} \sim \frac{q^{4-2
\gamma -\delta}}{ s^{1-\gamma}}.
\end{equation}

The above analysis is similar to the one in  Lemma A4 page 1102 in \cite{FL}.

Putting all these estimates together we finally obtain the estimation
\begin{equation} \label{can3}\mu_1[0]-  \mathcal{L}_{ \log J}^{q} (I_{[0]})
(\underbrace{00...0}_{ s}1..) \sim \frac{(q+s)^{1-\gamma}}{s^{-\gamma} r(s)}.
\end{equation}

\bigskip

As mentioned before the Ruelle operator
$\mathcal{L}_{ \log J} $ is the dual (in the
$\mathcal{L}^2(\Omega,\mathcal{B},\mu_1)$ sense) of the Koopman operator $\mathcal{K} (\varphi) =
\varphi \circ \sigma$, using this duality
we get that for fixed $q$ that

\begin{align*}
\int_{\Omega}
(I_{[0]} \circ  \sigma^q)
\,[\,I_{[0]}\, - \mu_1[0]\,] \, d \mu_1
&=
\int  I_{[0]}  \, \mathcal{L}_{ \log J}^{q} [\,I_{[0]}\, - \mu_1[0]\,] \, d
\mu_1
\\
&=
\int_{\Omega}
I_{[0]}(x)
\big[ \mathcal{L}_{ \log J}^{q} \,I_{[0]}(x)\, - \mu_1[0] \big]
\, d \mu_1 (x)
\\
&\hspace*{-1.5cm}=
\sum_{j=1}^\infty\,
\int_{\Omega}
I_{[0]}(\underbrace{00...0}_{ j}1..)
\big[
\mathcal{L}_{ \log J}^{q} \,I_{[0]}(\underbrace{00...0}_{ j}1..)\,
-\mu_1[0]
\big]
\, d \mu_1
\\
&=
\sum_{j=1}^\infty
\mu_1([\underbrace{00...0}_{ j}1]\,
\big[
\mathcal{L}_{ \log
J}^{q} \,I_{[0]}(\underbrace{00...0}_{ j}1..)\, - \mu_1[0]
\big]
\\
&=
\sum_{j=1}^\infty
\, j^{1-\gamma} \,\,\frac{(q+j)^{1-\delta}}{j^{-\gamma}
\, r(q)}\sim q^{2-\delta},
\end{align*}
where in the last equality we used (\ref{can3}) and (\ref{mumu1}).
We point out that the decay of correlations of other observables can also be obtained by variations of the above method.

\section{Potentials on the lattices $\mathbb{N}$ and $\mathbb{Z}$} \label{renor}

In this section we describe the setting where the Ruelle operator provides results for the one-dimensional lattice $\mathbb{Z}$. This is a classical topic and we present it here just for completeness  (see \cite{Rue}, \cite{Kell}, \cite{Bow}, \cite{CL} and \cite{PP}).

The elements of the symbolic space
$\Omega=\{0,1\}^\mathbb{N}$ as in the previous
section are be denoted by $(x_0,x_1,x_2,..)$,
while the elements in two-sided lattice
$\hat{\Omega}=\{0,1\}^\mathbb{Z}$
are denoted by $(...x_{-2},x_{-1}\,|\, x_0,x_1,x_2,...)$.
The action of the left shift $\hat{\sigma}$
on $\hat{\Omega}=\{0,1\}^\mathbb{Z}$
is described by
\(
\sigma (...,x_{-2},x_{-1}\,|\, x_0,x_1,x_2,...)
=
(...,x_{-2},x_{-1}, x_0 \,|\,x_1,x_2,...).
\)

Consider a potential
$\hat{f}:\hat{\Omega}= \{0,1\}^\mathbb{Z}\to \mathbb{R}$
and suppose that we are interested in
finding an equilibrium state $\hat{\mu}$
for the potential $\hat{f}$,
that is, a probability measure on $\hat{\Omega}$ satisfying
$$
	P(\hat{f})
	= \sup_{\nu\in \mathcal{M}(\hat{\sigma})}
	  \left\{ h(\nu) + \int_{\hat{\Omega}} \hat{f}\, d\nu \right\}
	  = h(\hat{\mu}) + \int_{\hat{\Omega}} \hat{f}\, d\hat{\mu}.
$$

Since there is no reason to consider the site $0\in \mathbb{Z}$
a special one in the lattice, then it is natural to
looking for shift-invariant probability measures
among those solving the variational problem.

The crucial observation that relates one sided
and two-sided lattices is the following: the equilibrium
probabilities for $\hat{f}$ and
$g:\{0,1\}^\mathbb{Z}\to\mathbb{R}$, satisfying
$\hat{f}=g+ h - h \circ \hat{\sigma}$ for some continuous
$h:\{0,1\}^\mathbb{Z}\to\mathbb{R}$ are the same.
When such $h$ do exists the above equation
is called a coboundary equation.
The main point is that under some mild regularity assumptions
on the potential $f$, one can get a special $g$
which depends just on future coordinates, that is,
for any pair
$x=(... ,x_{-2},x_{-1}\,|\,x_0,x_1,x_2,...)$
and $y=(... ,y_{-2},y_{-1}\,|\,x_0,x_1,x_2,...) \in \hat{\Omega}$,
we have $g(x)=g(y)$.
By abusing the notation we frequently
write $g(x)= g(x_0,x_1,x_2,...)$,
which allow us to think about $g$ is a
function on $\{0,1\}^\mathbb{N}$.

For the potential $g$ one can apply the formalism of
the Ruelle Operator $\mathcal{L}_g$ to study the
properties of $\mu$ (a probability measure on $\{0,1\}^\mathbb{N}$),
which is the equilibrium state for
$g:\{0,1\}^\mathbb{N}\to\mathbb{R}$.
In those cases it is possible to show that
the equilibrium state for
$\hat{f}$ is the probability measure $\hat{\mu}$
(a probability on $\{0,1\}^\mathbb{Z}$),
which is given by the natural extension of $\mu$
(see \cite{Bow} and \cite{PP}).
Let us elaborate on that:
what we call the natural extension of the shift-invariant probability measure
$\mu$ on $\{0,1\}^\mathbb{N}$
is the probability measure
$\hat{\mu}$ on $\{0,1\}^\mathbb{Z}$ defined in the
following way: for any given cylinder set on the space $\hat{\Omega}$
of the form
$[a_k, a_{k+1},... ,a_{-2},a_{-1}\,|\,a_0,a_1,a_2,..a_{k+n}],$
where $a_j\in\{0,1\}$ and $j\in\{k,k+1,..,k+n\}\subset\mathbb{Z}$, we define
$$
\hat{\mu}\Big([a_k, a_{k+1},... ,a_{-1}\,|\,a_0,a_1,...,a_{k+n}]\Big)
=\mu\Big([a_k, a_{k+1},...,a_{-1},a_0,a_1,..a_{k+n}]\Big),
$$
where $[a_k, a_{k+1},... ,a_{-1},a_0,a_1,...,a_{k+n}]$ is now a cylinder on $\Omega$.
Notice that if $\mu$ is shift-invariant,
then $\hat{\mu}$ is shift-invariant.

Given the potential $\hat{f}:\hat{\Omega} \to \mathbb{R}$,
it is natural to denote  $\mathcal{G}^*(\hat{f})$ as
the set of natural extensions $\hat{\nu}$  of the probabilities
$\nu$ which are eigenprobabilities of the Ruelle
operator $\mathcal{L}_g$ (where $g$ was the associated coboundary).

All the above statements holds true if $\hat f$ is in H\"older
(and then we get that $g$ is also H\"older)
or Walters class (see \cite{Yair-Doan}).
In some cases where $\hat{f}$ (or $g$) is not H\"older,
part of the above formalism also works.
This is indeed the case
of all potentials we considered here.

We shall remind that in order to use the Ruelle
operator formalism we have to work
with potentials $f$ which are defined on the symbolic
space $\Omega$, i.e.,\linebreak
$f:\Omega= \{0,1\}^\mathbb{N}\to \mathbb{R}$.

\medskip

\end{document}